\begin{document}

\title{$\mathbb A^1$-connected components of affine quadrics}	
\author{Chetan Balwe}	
\address{Department of Mathematical Sciences, Indian Institute of Science Education and Research Mohali, Knowledge City, Sector 81, SAS Nagar, Punjab 140306,  India}
\email{cbalwe@iisermohali.ac.in}
\author{Nidhi Gupta}	
\address{School of Mathematics, Tata Institue of Fundamental Research, Homi Bhabha Road, Colaba, Mumbai 400005, India}
\email{nidhi@math.tifr.res.in}

\begin{abstract}
For any smooth  quadratic hypersurface $X$ in $\mathbb A^n_k$, we use the iterations of the functor of naive $\mathbb{A}^1$-connected components $\mathcal{S}$ to study the field-valued sections of the sheaf of $\mathbb{A}^1$-connected components  $\pi_0^{\mathbb{A}^1}(X)$ of $X$.  We prove that for any  field $F/k$, the canonical isomorphism $\pi_0^{\mathbb{A}^1}(X)(F) \xrightarrow{\sim} \lim_{n} \mathcal{S}^n(X)(F)$ stabilizes at $n=2$, meaning that   $\pi_0^{\mathbb{A}^1}(X)(F)=\mathcal{S}^2(X)(F)$.
Furthermore, by combining this result with Morel's characterization of $\mathbb{A}^1$-connected spaces in terms of the triviality of field-valued sections of $\pi_0^{\mathbb{A}^1}$, we provide a complete characterization of $\mathbb{A}^1$-connected smooth quadratic hypersurfaces in $\mathbb{A}^n_k$.  
\end{abstract}
\maketitle
\newtheorem{theorem}{Theorem}[section]
        \newtheorem{proposition}[theorem]{Proposition}
	\newtheorem{lemma}[theorem]{Lemma}
	\newtheorem{corollary}[theorem]{Corollary}
	\theoremstyle{definition}
        \newtheorem{definition}[theorem]{Definition}
        \newtheorem{construction}[theorem]{Construction}
	\newtheorem{notation}[theorem]{Notation}
        \newtheorem*{ack}{Acknowledgement}
	\newtheorem{claim}[theorem]{Claim}
    \newtheorem*{case}{Case}
       \newtheorem{example}[theorem]{Example}
	\newtheorem{examples}[theorem]{Examples}
	\newtheorem{remark}[theorem]{Remark}

\def\~{\widetilde}
\def\-{\overline}
\def\<{\langle}
\def\>{\rangle}
\def\@{\mathcal}
\def\!{\mathbf}
\def\#{\mathbb}
\def\^{\widehat}

\newcommand{\Proj}{{\rm Proj} \,}
\newcommand{\codim}{{\rm codim}}
\newcommand{\Dim}{{\rm dim}}
\newcommand{\rank}{{\rm rank}}
\newcommand{\Spec}{{\rm Spec \,}}
\newcommand{\cha}{{\rm char} \,}
\newcommand{\fixme}[1]{\textcolor{red}{#1}}

\section{Introduction}
Let $\mathcal{H}(k)$ be the Morel–Voevodsky unstable $\mathbb{A}^1$-homotopy category over a field $k$ \cite{MV}. Similar to algebraic topology, one can associate various invariants to any simplicial sheaf $\mathcal F\in \mathcal H(k)$ to study its homotopy type. One such invariant is the sheaf of $\mathbb{A}^1$-connected components, denoted by $\pi_0^{\mathbb{A}^1}(\mathcal F)$. However, explicitly computing $\pi_0^{\mathbb{A}^1}(\mathcal F)$ is generally challenging.

Asok and Morel (see \cite{AM}) introduced the sheaf of $\mathbb{A}^1$-chain connected components, denoted in this article by $\mathcal{S}(\mathcal{F})$. Intuitively, $\mathcal{S}(\mathcal{F})$ (see Definition \ref{naive definiton}) captures connectedness through homotopies parametrized by the affine line. There exists a canonical epimorphism of sheaves:  
\[
\mathcal{S}(\mathcal{F}) \to \pi_0^{\mathbb{A}^1}(\mathcal{F}).
\]  
This epimorphism need not be an isomorphism in general, even when $\mathcal{F}$ is represented by a smooth proper scheme \cite{BHS, BSruled}.  

To further understand $\pi_0^{\mathbb{A}^1}(\mathcal{F})$, one considers the universal $\mathbb{A}^1$ invariant quotient of $\mathcal F$, denoted $\mathcal{L}(\mathcal{F})$, which is obtained as the colimit of the sequence $\mathcal{S}^n(\mathcal{F})$ for $n \in \mathbb{N}$. The canonical map  
\[
\pi_0^{\mathbb{A}^1}(\mathcal{F}) \to \mathcal{L}(\mathcal{F})=\lim_{\to n} \mathcal{S}^n(\mathcal{F})
\]  
is an isomorphism if and only if $\pi_0^{\mathbb{A}^1}(\mathcal{F})$ is $\mathbb{A}^1$-invariant \cite{BHS}. Morel conjectured that $\pi_0^{\mathbb{A}^1}(\mathcal{F})$ is always $\mathbb{A}^1$-invariant \cite{Morelbook}. Ayoub gave a counter-example to this conjecture (see \cite{Aconjecture}) but it remains open for representable sheaves.

Morel's conjecture holds when $\mathcal{F}$ is represented by a scheme of dimension $\leq 1$, an $\mathbb{A}^1$-rigid scheme, or an $\mathbb{A}^1$-connected scheme. Additionally, it has been proved for motivic $H$-groups, homogeneous spaces of motivic $H$-groups \cite{CU}, and smooth toric varieties \cite{Wendt}. Using the universal $\mathbb{A}^1$-invariant quotient, Morel's conjecture has been verified for smooth projective surfaces \cite{BHS} and certain smooth projective threefolds \cite{Pawar}.  

If $\mathcal{F}$ is a sheaf of sets and $K/k$ is a finitely generated separable extension, the natural map     
\[
\pi_0^{\mathbb{A}^1}(\mathcal{F})(K) \to \mathcal{L}(\mathcal{F})(K)
\]  
is a bijection (see \cite[Theorem 1.1]{BRS}. Thus, to compute the field-valued points of $\pi_0^{\mathbb{A}^1}(\mathcal{F})$, we must iterate the functor $\mathcal{S}$ infinitely many times. However, if $\mathcal{F}$ is represented by a proper scheme $X$, the infinitely many iterations are inessential because $\mathcal{L}(X)(K)=\mathcal{S}(X)(K)$ (see \cite[Proposition 2.4.3]{AM} or \cite[Theorem 2]{BHS}). On the other hand, for every $n \in \mathbb{N}$, there exists a non-proper complex variety $X_n$ for which $\mathcal{S}^n(X_n)(\mathbb{C}) \neq \mathcal{S}^{n+1}(X_n)(\mathbb{C})$, while $\mathcal{L}(X_n)(F)=\mathcal{S}^{n+1}(X_n)(F)$ for every field extension $F/\mathbb C$ (see \cite{gupta}). Thus, there is no finite, uniform bound on the number of iterations of $\mathcal{S}$ that are required in order to compute the field-valued points of the sheaf of $\mathbb{A}^1$-connected components of a variety. It is natural to ask the following question: For a given variety $X$, does there exist $n_X \in \mathbb{N}$ such that
\[
\pi_0^{\mathbb{A}^1}(X)(K)=\mathcal{S}^{n_X}(X)(K) \quad \text{for all } K/k?
\]  

The aim of this paper is to compute the field-valued sections of the sheaf $\pi_0^{\mathbb{A}^1}(X)$ for any smooth affine quadratic hypersurface $X$ in $\mathbb{A}^n_k$ and to show that the sequence $\mathcal{S}^n(X)(K)$ stabilizes at $n=2$ for all $K/k$. (The case of non-smooth affine quadratic hypersurfaces is quite trivial, as can be seen in Remark \ref{remark non-smooth case}.)

If $\psi$ is a quadratic form in $n$ variables $x_1, \ldots, x_n$, we will denote by $Q^{\psi}$ the quadratic hypersurface in $\mathbb{A}^n_k$  defined by the equation $\psi(x_1, \ldots, x_n)-1=0$. We will see in Theorem \ref{theorem standard form} that it is sufficient to consider quadratic hypersurfaces of this type. Our first main result is the following:

\begin{theorem}\label{main theorem for Q}
    Let $k$ be a field of characteristic 0 and let $F/k$ be a finitely generated field extension. Let $\psi$ be a regular quadratic form in $n \geq 3$ variables, and let $Q^\psi$ be the smooth hypersurface in $\mathbb{A}^n_k$ defined by the equation $\psi(x_1, \dots, x_n)-1=0$.  
    \begin{enumerate}
        \item If $\psi$ is isotropic, then $\psi \sim \< 1 \> \perp -\phi$ for some quadratic form $\phi$ in $n-1$ variables, and  
        \[
        \pi_0^{\mathbb{A}^1}(Q^{\psi})(F)=\mathcal{S}(Q^{\psi})(F)=\frac{F^*}{\< D(\phi_F) \>}.
        \]  
        \item If $\psi$ is anisotropic, then $\mathcal S(Q^{\psi})(k)\neq \mathcal S^2(Q^{\psi})(k)$ and  
        \[
        \pi_0^{\mathbb{A}^1}(Q^{\psi})(F)=\mathcal{S}^2(Q^{\psi})(F).
        \]
    \end{enumerate}
\end{theorem}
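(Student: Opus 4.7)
The plan is to reduce to a diagonal form via Theorem~\ref{theorem standard form}, then treat the isotropic and anisotropic cases by different methods.

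\textbf{Isotropic case.} A regular isotropic form in $n \geq 2$ variables represents every nonzero value of $k$, in particular $1$, so $\psi \sim \langle 1\rangle \perp (-\phi)$. In these coordinates, $Q^\psi$ is defined by $(x_1-1)(x_1+1) = \phi(x_2, \ldots, x_n)$. I would construct a morphism of sheaves $\mu \colon Q^\psi \to \mathbb{G}_m/\langle D(\phi)\rangle$, given on the open subset $\{x_1 \ne 1\}$ by $(x_1, x') \mapsto [x_1 - 1]$ and glued across $\{x_1 = 1\}$ via the factorization of the defining equation. Invariance of $\mu$ under $\mathbb{A}^1$-homotopies amounts to showing that for any morphism $f\colon \mathbb{A}^1_F \to Q^\psi$, the ratio $(f_1(1)-1)/(f_1(0)-1)$ lies in $\langle D(\phi_F)\rangle$; the key computation tracks the divisor of zeros of $f_1-1$ on $\mathbb{A}^1_F$, where each zero forces an isotropy condition on $\phi$ that I would convert into a relation in $\langle D(\phi_F)\rangle$. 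Surjectivity of the induced map $\mathcal{S}(Q^\psi)(F) \to F^*/\langle D(\phi_F)\rangle$ is immediate by writing down explicit preimages, and injectivity follows by constructing $\mathbb{A}^1$-chains between points with the same $\mu$-class, using the isotropy of $\psi$ to produce enough linear families. Because the Cassels--Pfister subform theorem guarantees $\langle D(\phi_{F[t]})\rangle \cap F^* = \langle D(\phi_F)\rangle$, the target sheaf is $\mathbb{A}^1$-invariant, so the identification stabilizes under $\mathcal{S}$ and equals $\pi_0^{\mathbb{A}^1}(Q^\psi)(F)$ via \cite[Theorem 1.1]{BRS}.

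\textbf{Anisotropic case.} For the inequality $\mathcal{S}(Q^\psi)(k) \ne \mathcal{S}^2(Q^\psi)(k)$, I would produce an explicit pair of $k$-points of $Q^\psi$ that are not naively $\mathbb{A}^1$-chain-connected over $k$ (anisotropy forces severe restrictions on the rational curves defined over $k$), but which become identified after one more application of $\mathcal{S}$ via a chain passing through an $\mathcal{S}(Q^\psi)$-valued section over a function field $E/k$ in which $\psi_E$ becomes isotropic; the bridging chain is supplied by the isotropic description from the previous case, applied over $E$. For the main identity $\pi_0^{\mathbb{A}^1}(Q^\psi)(F) = \mathcal{S}^2(Q^\psi)(F)$, I would invoke \cite[Theorem 1.1]{BRS} and reduce to proving that $\mathcal{S}^2(Q^\psi)(F) \to \mathcal{S}^n(Q^\psi)(F)$ is a bijection for every $n \geq 2$ and every finitely generated $F/k$. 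The strategy is to lift any chain representing an element of $\mathcal{S}^{n+1}(Q^\psi)(F)$ to a chain in $\mathcal{S}(Q^\psi)$ valued in residue fields of its intermediate points, observe that $\psi$ becomes isotropic over those residue fields, invoke the isotropic description to promote $\mathcal{S}(Q^\psi)$ there to an abelian group, and use the resulting algebra to contract length-$n$ chains to length two.

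\textbf{Main obstacle.} The central technical challenge is the stabilization at exactly $n=2$ in the anisotropic case. The witness for $\mathcal{S} \ne \mathcal{S}^2$ is a single concrete construction, but proving $\mathcal{S}^2 = \mathcal{S}^n$ for all $n \geq 2$ on every field-valued section requires uniform control over $\mathbb{A}^1$-chains of arbitrary length, and demands careful bookkeeping between the abelian-group structure from the isotropic case (available only over residue fields where $\psi$ is isotropic) and the specialization back to $F$. Organizing this interplay cleanly, while respecting the sheaf-theoretic subtleties of iterating $\mathcal{S}$, is in my assessment where the bulk of the technical work lies.
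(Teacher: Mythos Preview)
Your overall strategy matches the paper's: identify $\mathcal{S}(Q^{\psi})(F)$ explicitly, then prove stabilization at the right stage and invoke \cite[Theorem~1.1]{BRS}. The coordinate choice differs slightly (the paper exploits the hyperbolic splitting $\psi\sim\langle 1,-1\rangle\perp\phi'$ to write $Q^{\psi}$ as $x_1x_2=\phi(1,x_3,\ldots,x_n)$ and takes the invariant $[x_1]$ rather than $[x_1-1]$), but this is inessential.

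The genuine gap is your stabilization step in the isotropic case. You assert that the Cassels--Pfister statement $\langle D(\phi_{F[t]})\rangle\cap F^*=\langle D(\phi_F)\rangle$ makes ``the target sheaf $\mathbb{A}^1$-invariant'' and hence forces $\mathcal{S}=\mathcal{S}^2$ on fields. This does not follow: that identity is a statement about field points only, whereas to factor $\mu$ through $\mathcal{L}(Q^{\psi})$ you would need $\mathbb{G}_m/\langle D(\phi)\rangle$ to be an $\mathbb{A}^1$-invariant \emph{Nisnevich sheaf}, which you have not constructed. Equivalently, $\mathcal{S}^2$ is built by Nisnevich-sheafifying and then taking $\mathbb{A}^1$-homotopy classes, so two classes in $\mathcal{S}(Q^{\psi})(F)$ can be identified in $\mathcal{S}^2(Q^{\psi})(F)$ via a \emph{ghost homotopy}---data on a Nisnevich cover of $\mathbb{A}^1_F$---and you must check that your invariant $\mu$ is constant along such data, not merely along honest morphisms $\mathbb{A}^1_F\to Q^{\psi}$. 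The paper does exactly this in Theorem~\ref{iteration 2 for isotropic case}: it unpacks an arbitrary $1$-ghost homotopy, applies the field-level description of $\mathcal{S}(Q^{\psi})$ at the function field of the overlap, and uses the Quadratic Value Theorem together with a parity/valuation argument (Lemmas~\ref{lemma for specialisation} and~\ref{Lemma of curves for isotropic case}) to force the invariant to be constant. This ghost-homotopy analysis is the missing ingredient in your plan, not an optional refinement.

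The same issue recurs more seriously in the anisotropic case. Your description of ``contracting length-$n$ chains to length two'' is the right intuition, but the paper's execution (Theorem~\ref{iteration 3 for anisotropic case}) requires a concrete geometric step you do not mention: after reducing a $2$-ghost homotopy to a good rational curve $F:\mathbb{A}^1_{k(W)}\dashrightarrow Q^{\psi}$ over the function field of the overlap, one spreads $F$ to a rational map from a surface, \emph{resolves its indeterminacy by successive blow-ups}, and uses that the restriction of the resolved map to each exceptional curve is again a good rational curve (Claim~2 there). This is how one bridges points of $h_V$ back to points of $h_U$ through a chain of good curves over $k$. Without this resolution-of-indeterminacy argument your ``specialization back to $F$'' step has no mechanism, and the proof does not close.
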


Analogous results for algebraic groups have been established in \cite{BSgroups, BSgroups2}. By a result of Morel, a sheaf $\mathcal{F}$ is $\mathbb{A}^1$-connected if and only if $\pi_0^{\mathbb{A}^1}(\mathcal{F})(K)$ is trivial for any field $K/k$. Combining this with Theorem \ref{main theorem for Q}, we provide a complete characterization of the $\mathbb{A}^1$-connected quadratic hypersurfaces in $\mathbb{A}^n_k$ in terms of classical invariants of quadratic forms. 

For a quadratic form $\theta$ over a vector space $V$, let $i_0(\theta)$ denote the Witt index of $\theta$, that is, the dimension of the maximal isotropic subspace of $V$. Let $i_1(\theta)=i_0(\theta_{k(\theta)})$, where $k(\theta)$ is the function field of the projective quadric corresponding to $\theta$.  Then, we have proved the following result:

\begin{theorem}\label{a1connected quadric}
    Let $\psi$ be a quadratic form in $n \geq 3$ variables and let $\kappa$ denote the  quadratic $\psi \perp \< -1\>$. Then $Q^{\psi}$ is $\mathbb{A}^1$-connected if and only if one of the following conditions holds:  
    \begin{enumerate}
        \item $i_0(\kappa) \geq 2$,
        \item $i_0(\kappa)=1$, $i_0(\psi)=0$, and $i_1(\psi) \geq 2$.
    \end{enumerate}
\end{theorem}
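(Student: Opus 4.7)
The plan is to combine Morel's theorem (a smooth $k$-scheme $X$ is $\mathbb{A}^1$-connected iff $\pi_0^{\mathbb{A}^1}(X)(F) = *$ for every finitely generated $F/k$) with the two cases of Theorem \ref{main theorem for Q}, and then translate the resulting conditions on $\phi$ into statements about the Witt indices $i_0(\kappa)$ and $i_1(\psi)$. First I would set up the dictionary: whenever $\psi$ represents $1$ over $k$ (equivalently $Q^\psi(k) \neq \emptyset$, equivalently $i_0(\kappa) \geq 1$), write $\psi \sim \langle 1 \rangle \perp -\phi$; then $\kappa \cong \mathbb{H} \perp -\phi$, so $i_0(\kappa) = 1 + i_0(\phi)$. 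In particular $i_0(\kappa) \geq 2$ iff $\phi$ is isotropic, and $\phi$ isotropic forces $\psi$ itself to be isotropic (from $\phi(y_0) = 0$ with $y_0 \neq 0$, the vector $(0, y_0)$ is a non-trivial zero of $\psi$). Hence when $\psi$ is anisotropic, $\phi$ is automatically anisotropic and $i_0(\kappa) \in \{0, 1\}$.

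Next I would dispatch the isotropic case $i_0(\psi) \geq 1$. Here Theorem \ref{main theorem for Q}(1) gives $\pi_0^{\mathbb{A}^1}(Q^\psi)(F) = F^*/\langle D(\phi_F) \rangle$. If $\phi$ is isotropic (equivalently $i_0(\kappa) \geq 2$), then $\phi_F$ is universal for every $F/k$, so the quotient is trivial and $Q^\psi$ is $\mathbb{A}^1$-connected. If $\phi$ is anisotropic, I would specialize to $F = k(t)$ and use the $t$-adic valuation $v$: clearing denominators and factoring out the largest power of $t$, any value $\phi_F(w) \in F^*$ takes the form $t^{2d} \phi(w_0 + t(\dots))$ with $w_0 \in k^n \setminus \{0\}$, and anisotropy of $\phi$ forces $\phi(w_0) \neq 0$, so $v(\phi_F(w)) = 2d$ is even. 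Thus $\langle D(\phi_F) \rangle$ is contained in the even-valuation subgroup of $F^*$, the class of $t$ is non-trivial, and $Q^\psi$ is not $\mathbb{A}^1$-connected. Condition (b) cannot hold in this regime since $i_0(\psi) \geq 1$, so the equivalence matches the theorem.

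For the anisotropic case $i_0(\psi) = 0$: if $\psi$ does not represent $1$, then $Q^\psi(k) = \emptyset$ forces $\mathcal{S}(Q^\psi)(k) = \emptyset$, so $Q^\psi$ is not $\mathbb{A}^1$-connected and neither (a) nor (b) holds. Otherwise $i_0(\kappa) = 1$, and Theorem \ref{main theorem for Q}(2) reduces the question to whether $\mathcal{S}^2(Q^\psi)(F) = *$ for every $F/k$. The key idea is that the second iteration of $\mathcal{S}$ allows $\mathbb{A}^1$-chains passing through the generic point of $Q^\psi$, whose residue field contains $k(\psi)$; over $k(\psi)$ the form $\psi$ becomes isotropic with Witt index $i_1(\psi)$, and Witt cancellation in $\mathbb{H} \perp -\phi_{k(\psi)} \cong \psi_{k(\psi)} \perp \langle -1 \rangle \cong \mathbb{H}^{i_1(\psi)} \perp \psi_{\mathrm{an}}$ shows $\phi_{k(\psi)}$ has Witt index $\geq i_1(\psi) - 1$. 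Hence $\phi_{k(\psi)}$ is isotropic iff $i_1(\psi) \geq 2$, matching condition (b), and the isotropic-case formula applied over $k(\psi)$-extensions then yields triviality.

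The main obstacle is this last step: making precise the assertion that passing from $\mathcal{S}$ to $\mathcal{S}^2$ corresponds exactly to base-changing the $F^*/\langle D(\phi_F) \rangle$-obstruction to $k(\psi)$, and showing that when $i_1(\psi) = 1$ this obstruction genuinely survives two applications of $\mathcal{S}$. Both directions rest on the fine structure of $\mathcal{S}^2(Q^\psi)$ established in the proof of Theorem \ref{main theorem for Q}(2); the forward implication amounts to producing explicit $\mathbb{A}^1$-chains realizing the identifications forced by $i_1(\psi) \geq 2$, while the converse uses a valuation-theoretic argument over $F(t)$ analogous to the one employed in the isotropic case above.
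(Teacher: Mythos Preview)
Your treatment of the isotropic case is correct and matches the paper. The anisotropic case, however, has two genuine gaps.

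\textbf{The ``iff'' via Witt cancellation is incomplete.} From $\mathbb{H}\perp(-\phi_{k(\psi)})\cong \mathbb{H}^{i_1(\psi)}\perp(\psi_{k(\psi)})_{\mathrm{an}}\perp\langle -1\rangle$ you only get $i_0(\phi_{k(\psi)})\geq i_1(\psi)-1$. When $i_1(\psi)=1$ this says nothing; you still need $(\psi_{k(\psi)})_{\mathrm{an}}\perp\langle -1\rangle$ anisotropic, which is not elementary. The paper invokes Vishik's subform theorem (Proposition~\ref{Witt index of subform}): since $-\phi$ has codimension~$1$ in $\psi$ and $1\geq i_1(\psi)$, the form $\phi_{k(\psi)}$ is anisotropic. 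Once this is in hand, the paper's converse is simpler than you anticipate: over $F=k(\psi)(t)$ the form $\psi_F$ is \emph{isotropic}, so part~(1) of Theorem~\ref{main theorem for Q} applies directly and gives $\pi_0^{\mathbb{A}^1}(Q^\psi)(F)=F^*/\langle D(\phi_F)\rangle$, which is non-trivial by your own valuation argument. No $\mathcal{S}^2$ analysis is needed here.

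\textbf{The forward direction is not ``base change to $k(\psi)$''.} Your heuristic that $\mathcal{S}\to\mathcal{S}^2$ amounts to passing to $k(\psi)$ does not yield a proof for \emph{all} $F$; you only control fields containing $k(\psi)$. The paper's argument is different and more global. By Proposition~\ref{alternative defn of i1}, $i_1(\psi)\geq 2$ means $i_0(\psi_F)\geq 2$ for \emph{every} $F$ over which $\psi_F$ is isotropic; Witt cancellation then gives $\phi_F$ isotropic for every such $F$. Consequently, at any closed point $P$ of $\mathbb{A}^1_F$ where a rational curve $f:\mathbb{A}^1_F\dashrightarrow Q^\psi$ fails to extend, $\psi_{F(P)}$ is isotropic and hence so is $\phi_{F(P)}$; by the Quadratic Value Theorem this forces $f$ to be a good rational curve. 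Now $\bar{Q}^\psi$ is a smooth projective quadric with a rational point, hence $\mathbb{A}^1$-chain connected, so any two $F$-points of $Q^\psi$ are joined by a chain of rational curves, all of which are good. Theorem~\ref{iteration 2 for anisotropic} then gives $\mathcal{S}^2(Q^\psi)(F)=\ast$.
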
  

Consequently, if $k$ is algebraically closed (or quadratically closed), any smooth affine quadratic hypersurface in $\mathbb{A}^n_k$ for $n\geq 2$ is either isomorphic to $\mathbb G_m\times \mathbb A^{n-2}_k$ or is  $\mathbb{A}^1$-connected.

\section{Preliminaries}

\subsection{Preliminaries on $\mathbb A^1$-connected components}

We fix a base field $k$. Let $Sm/k$ denote the Grothendieck site of
finite-type smooth schemes on $k$ equipped with the Nisnevich topology.

\begin{definition}
    Let $\@F$ be a sheaf of sets on $Sm/k$. The sheaf of \textit{$\mathbb A^1$-connected components} of $\mathcal F$, denoted by $\mathbb \pi_0^{\mathbb A^1}(\mathcal F)$ is defined as the Nisnevich sheafification of the presheaf 
    \[U\in Sm/k\mapsto Hom_{\mathcal H(k)}(U,\mathcal F).\]
\end{definition}

\begin{definition}
    A sheaf of sets $\mathcal{F}$ is said to be $\mathbb{A}^1$-connected if the canonical morphism $\mathcal{F} \to \Spec k$ induces the isomorphism of sheaves $\pi_0^{\mathbb{A}^1}(\mathcal{F}) \xrightarrow{\sim} \Spec k$.
\end{definition}

The following result allows us to determine whether $\mathcal F$ is $\mathbb A^1$-connected by just examining the field-valued sections of $\pi_0^{\mathbb A^1}(\mathcal F)$. 

\begin{lemma}\cite[Lemma 6.1.3]{MF}\label{stable}
    A sheaf of sets $\mathcal{F}$ on $Sm/k$ is $\mathbb{A}^1$-connected if $\pi_0^{\mathbb{A}^1}(\mathcal{F})(K)=*$ for every finitely generated separable extension $K$ of $k$.
\end{lemma}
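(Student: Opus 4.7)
The plan is to prove that the canonical Nisnevich sheaf map $\pi_0^{\mathbb{A}^1}(\mathcal F)\to \Spec k$ is an isomorphism under the hypothesis that $\pi_0^{\mathbb{A}^1}(\mathcal F)(K)=\ast$ for every finitely generated separable extension $K/k$. Since $\Spec k$ is the terminal sheaf on $Sm/k$, this amounts to showing that $\pi_0^{\mathbb{A}^1}(\mathcal F)(U)$ is a singleton for every $U \in Sm/k$.

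My first step is to reduce to stalks. Since $\pi_0^{\mathbb{A}^1}(\mathcal F)$ is a Nisnevich sheaf and isomorphisms of Nisnevich sheaves are checked stalkwise, it suffices to prove $\pi_0^{\mathbb{A}^1}(\mathcal F)(\Spec \mathcal O^h_{X,x}) = \ast$ for every $X \in Sm/k$ and every $x \in X$, where $\mathcal O^h_{X,x}$ is the Henselization of the local ring at $x$. The residue field $\kappa(x)$ is a finitely generated separable extension of $k$, so by hypothesis $\pi_0^{\mathbb{A}^1}(\mathcal F)(\kappa(x)) = \ast$. Non-emptiness of $\pi_0^{\mathbb{A}^1}(\mathcal F)(\Spec \mathcal O^h_{X,x})$ is immediate by functoriality applied to the structure morphism $\Spec \mathcal O^h_{X,x} \to \Spec k$ together with $\pi_0^{\mathbb{A}^1}(\mathcal F)(k) = \ast$, so the substantive content is \emph{at-most-one-ness}.

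The key step is therefore to show injectivity of the specialization map
\[
\pi_0^{\mathbb{A}^1}(\mathcal F)(\Spec \mathcal O^h_{X,x}) \longrightarrow \pi_0^{\mathbb{A}^1}(\mathcal F)(\Spec \kappa(x)).
\]
Given a class $s$ whose restriction to $\kappa(x)$ is trivial, I would represent $s$ after a Nisnevich refinement by an honest morphism in $\mathcal H(k)$, realize the witnessing triviality on the closed fiber as an $\mathbb A^1$-chain connecting $s|_{\kappa(x)}$ to the base-point, and lift each link of the chain from $\mathbb A^1_{\kappa(x)}$ to $\mathbb A^1_{\mathcal O^h_{X,x}}$. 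Formally cleanest is to work with Morel's singular $\mathbb A^1$-functor $\operatorname{Sing}_\bullet^{\mathbb A^1}(\mathcal F)$, whose Nisnevich $\pi_0$ is $\pi_0^{\mathbb A^1}(\mathcal F)$, and to lift the connecting $1$-simplices from the closed fiber using the Henselian extension property.

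I expect this last Henselian-lifting step to be the main obstacle, because $\mathbb A^1$-homotopies are indexed by $\mathbb A^1_{\mathcal O^h_{X,x}}$ rather than by a Henselian base, so the naive Hensel lemma does not apply directly; the argument instead requires the fiber-wise smooth lifting along $\mathbb A^1_{\mathcal O^h_{X,x}} \to \mathbb A^1_{\kappa(x)}$, carefully compatible with the sheafification implicit in the definition of $\pi_0^{\mathbb A^1}$. Once the chain of homotopies is lifted, $s$ becomes $\mathbb A^1$-chain-equivalent to the base-point over $\Spec \mathcal O^h_{X,x}$, hence represents the unique class in $\pi_0^{\mathbb A^1}(\mathcal F)(\Spec \mathcal O^h_{X,x})$, completing the argument.
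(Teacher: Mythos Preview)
The paper does not supply a proof of this lemma; it simply cites \cite[Lemma 6.1.3]{MF}. So there is no ``paper's own proof'' to compare against, and the question is whether your sketch stands on its own.

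There is a genuine gap. Your key step asserts that $\pi_0^{\#A^1}(\@F)$ coincides with the Nisnevich $\pi_0$ of $\mathrm{Sing}_\bullet^{\#A^1}(\@F)$, and then proposes to lift $\#A^1$-chains from $\kappa(x)$ to $\@O^h_{X,x}$. Both parts fail. First, $\pi_0(\mathrm{Sing}_\bullet^{\#A^1}(\@F))$ is exactly the sheaf $\@S(\@F)$ of naive $\#A^1$-chain connected components (see Definition~\ref{naive definiton} and the sentence after it), and the paper itself emphasizes that the epimorphism $\@S(\@F)\to\pi_0^{\#A^1}(\@F)$ is not an isomorphism in general \cite{BHS,BSruled}. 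So representing classes in $\pi_0^{\#A^1}(\@F)$ by single $\#A^1$-chains is not available. Second, even granting such a representation, you correctly flag that $\#A^1_{\@O^h_{X,x}}$ is not henselian, so the Henselian lifting of $1$-simplices along $\#A^1_{\@O^h_{X,x}}\to\#A^1_{\kappa(x)}$ has no justification; this is not a technicality that can be patched by ``fiber-wise smooth lifting''.

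Morel's actual argument in \cite{MF} does not proceed by lifting homotopies. The injectivity of the specialization map for $\pi_0^{\#A^1}(\@F)$ on henselian local schemes is a consequence of his unstable $\#A^1$-connectivity theorem, whose proof rests on Gabber's geometric presentation lemma. That input is substantial and is precisely what replaces the naive Hensel-lifting you propose. Your outline (reduce to stalks, then prove injectivity of specialization) is the right shape, but the missing ingredient is this connectivity theorem, not an elementary lifting argument.
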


\begin{notation}
For any smooth scheme $U$ over $k$, $t\in k$, and any $H\in \mathcal{F}( \mathbb A^1_k\times U)$, define $H(t):= H\circ s_t$, where $s_t$ is the morphism $U\rightarrow \mathbb A^1_k\times U$ given by $u\mapsto (t,u)$. 
\end{notation}

\begin{definition}
    Let $\mathcal F$ be a sheaf of sets. Let $x_0,x_1\in\mathcal F(U)$.
    \begin{enumerate}
        \item $h\in \mathcal F(\mathbb A^1_k\times U)$ is called an \textit{$\mathbb A^1$-homotopy of $U$} connecting $x_0$ and $x_1$ if $h(0)=x_0$ and $h(1)=x_1$.
        \item $x_0$ and $x_1$ are  said to be \textit{$\mathbb A^1$-homotopic} if there exist an $\mathbb A^1$-homotopy $h\in \mathcal F(\mathbb A^1_k\times U)$ connecting $x_0$ and $x_1$. We say that $x_0$ and $x_1$ are \textit{$\mathbb A^1$-chain homotopic} if there exists $\mathbb A^1$-homotopies $h_i\in \mathcal F(\mathbb A^1_k\times U)$ and $z_i\in \mathcal F(U)$ for $i=1,\dots, r$ such that $h_i$ connects $z_{i-1}$ and $z_i$ where $z_0=x_0$ and $z_r=x_1$.
     \end{enumerate}
\end{definition}

\begin{definition}\label{naive definiton}
The sheaf of \textit{naive $\mathbb A^1$-connected components} of $\mathcal F$, denoted by $\mathcal S(\mathcal F)$, is defined as the Nisnevich sheafification of the presheaf $\mathcal S^{pre}(\mathcal F)$, 
\[
\mathcal S^{pre}(\mathcal F)(U):= \frac{\mathcal F(U)}{\sim},
\]
where $\sim$ is the equivalence relation of $\mathbb A^1$-chain homotopy. 
\end{definition}

The sheaf $\mathcal S(\mathcal F)$ is also called  the \emph{sheaf of $\mathbb{A}^1$-chain connected components of $\@F$} (see \cite[Definition 2.2.4]{AM}. It is easy to see that $\mathcal{S}(\mathcal{F})=\pi_0(Sing_*^{\mathbb{A}^1}(\mathcal{F}))$ where $Sing_*^{\mathbb{A}^1}(\mathcal F)$ is the Suslin–Voevodsky singular construction of $\mathcal F$ (see \cite[p. 87]{MV}). 

We will say that a sheaf of sets $\@F$ is \emph{$\#A^1$-chain connected} if $\mathcal{S}(\mathcal{F})(K)=\ast$ for any finitely generated separable field extension $K/k$ (see \cite[Definition 2.2.2]{AM}). Note that this condition does not imply that $\mathcal{S}(\mathcal{F})=\ast$ but, due to Lemma \ref{stable},  it does imply that $\pi_0^{\mathbb{A}^1}(\mathcal{F})=\ast$. 

For a sheaf $\mathcal{F}$ and positive integer $n$, let $\mathcal{S}^n(\mathcal{F})$ be the sheaf obtained from $\mathcal F$ by applying $n$ iterations of the functor $\mathcal S$ . We have the following sequence of epimorphisms:
\[
\mathcal{F} \to \mathcal{S}(\mathcal{F}) \to \mathcal{S}^2(\mathcal{F}) \to \dots.
\]
Taking the direct limit of this sequence, we arrive at the \textit{universal $\mathbb{A}^1$-invariant quotient} $\mathcal{L}(\mathcal{F})$:
\[
\mathcal{L}(\mathcal{F}) := \lim_{\to n} \mathcal{S}^n(\mathcal{F}).
\]

For a field $K/k$, \cite[Theorem 1.1]{BRS} states that any two $K$ points of $\mathcal F$ have the same image in $\pi_0^{\mathbb A^1}(\mathcal F)(K)$ if and only if they have the same image in $\mathcal S^{n}(\mathcal F)(K)$ for some $n\in \mathbb N$.
Since we are interested in computing $\pi_0^{\mathbb A^1}(X)(K)$ for scheme $X$, it will be useful to have an explicit description of $\mathbb A^1$-homotopies of a field $K/k$ in $\mathcal S^n\mathcal ( X)$ via its lift to $X$. For this, we will use the notion of $n$-ghost homotopies. The following definition is a slightly modified reformulation of the one appearing in \cite[Definition 3.2]{BHS} and \cite[Definition 2.7]{BSruled}.

 \begin{definition}\label{data of 1-ghost homotopy}
Let $\mathcal F$ be a sheaf of sets, and $K/k$ be a finitely generated field extension. Let $n>0$. Let $x_0,x_1\in \mathcal F(K)$.
\begin{enumerate}  
    \item An \textit{$n$-ghost homotopy} $\mathcal{H}$ of $K$ connecting $x_0$ and $x_1$ in $\mathcal{F}$ consists of the data  
    \[
    \left( (U, V),\, h_U, h_V,\, y_0, y_1 \right),
    \]
    where:
    \begin{enumerate}  
        \item $(U \hookrightarrow \mathbb{A}^1_K,\, V \xrightarrow{p} \mathbb{A}^1_K)$ is an elementary Nisnevich covering of $\mathbb{A}^1_K$,  
        \item The maps $h_U: U \to \mathcal{F}$ and $h_V: V \to \mathcal{F}$ satisfy  $pr_U^*(h_U)=pr_V^*(h_V)$ in  $\mathcal{S}^{n}(\mathcal{F})(U \times_{\mathbb{A}^1_K} V)$,
        where $pr_U$ and $pr_V$ are the projection maps from $U \times_{\mathbb{A}^1_K} V$ to $U$ and $V$, respectively.  
        \item For $i=0,1$, $(h_U \coprod h_V)(y_i)=x_i.$ 
    \end{enumerate}  
    \item The elements $x_0$ and $x_1$ are called \textit{$n$-ghost homotopic} in $\mathcal{F}$ if there exists an $n$-ghost homotopy connecting them.  
    They are called \textit{$n$-ghost chain homotopic} if there exist $n$-ghost homotopies $\mathcal{H}_i$ and elements $z_i \in \mathcal{F}(K)$ for $i=1, \dots, r$ such that $\mathcal{H}_i$ connects $z_{i-1}$ and $z_i$, where $z_0=x_0$ and $z_r=x_1$.  
\end{enumerate}  
\end{definition}  

Since $\dim \mathbb{A}^1_K=1$, any Nisnevich covering of $\mathbb{A}^1_K$ can be refined to an elementary Nisnevich covering. Therefore, any map $\mathbb{A}^1_K \to \mathcal{S}^n(\mathcal{F})$ is defined by data as in Definition \ref{data of 1-ghost homotopy}(1). Conversely, such data induce a map $\mathbb{A}^1_K \to \mathcal{S}^n(\mathcal{F})$.  

\begin{notation}\label{sq}  
For any sheaf of sets $\mathcal{F}$, a scheme $U$, and an element $x \in \mathcal{F}(U)$, we denote the image of $x$ in $\mathcal{S}^j(\mathcal{F})(U)$ by $[x]_j$.  
\end{notation}

The following lemma is a restatement of a result from \cite{BSruled} and follows directly from the definitions.  

\begin{lemma}\cite[Lemma 2.8]{BSruled}
    Let $\mathcal F$ be a sheaf of sets, and $K/k$ be a finitely generated field extension. Let $x_0,x_1\in \mathcal F(K)$. Then $[x_0]_n=[x_1]_n$ if and only if $x_0$ and $x_1$  are $n$-ghost chain homotopic in $\mathcal F$.
\end{lemma}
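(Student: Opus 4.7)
I would prove both implications by a direct translation which, as the authors hint, follows formally from two structural facts already highlighted in the excerpt: (a) the Nisnevich site of $\mathrm{Spec}(K)$ is trivial, so for any Nisnevich sheaf $\mathcal{G}$ the value $\mathcal{S}(\mathcal{G})(K)$ coincides with the presheaf quotient $\mathcal{G}(K)/\!\!\sim_{\mathrm{chain}}$; and (b) any Nisnevich cover of $\mathbb{A}^1_K$ refines to an elementary Nisnevich cover, so a section of any Nisnevich sheaf over $\mathbb{A}^1_K$ is captured by gluing data of the form appearing in Definition~\ref{data of 1-ghost homotopy}.

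For the forward direction, I would write $\mathcal{S}^n(\mathcal{F}) = \mathcal{S}(\mathcal{S}^{n-1}(\mathcal{F}))$ and apply (a) to unwind $[x_0]_n = [x_1]_n$ into the existence of a chain of $\mathbb{A}^1$-homotopies of $\mathcal{S}^{n-1}(\mathcal{F})$ over $\mathbb{A}^1_K$ connecting $[x_0]_{n-1}$ and $[x_1]_{n-1}$ through intermediate $K$-valued points. Repeated applications of (a) then lift each intermediate class all the way down to an element of $\mathcal{F}(K)$, while (b) presents each intermediate $\mathbb{A}^1$-homotopy by gluing data $(U, V, h_U, h_V)$ with $h_U, h_V$ landing in $\mathcal{F}$; the compatibility that emerges from this procedure is exactly the sheaf-theoretic condition appearing in the definition, so the collected data assembles into an $n$-ghost chain homotopy between $x_0$ and $x_1$.

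The converse is just the reverse construction: the defining compatibility of an $n$-ghost homotopy is precisely the condition required to glue $(h_U, h_V)$ into a section over $\mathbb{A}^1_K$ at the appropriate level of iteration, and stringing such sections into a chain produces the equality $[x_0]_n = [x_1]_n$ after descending to $K$-points via (a). The main thing to be careful about is matching the levels of iteration in the nested sheafifications, which is where all the bookkeeping sits; no further technical input beyond the dictionary provided by (a) and (b) is needed.
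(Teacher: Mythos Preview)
The paper does not supply its own proof of this lemma; it cites \cite[Lemma 2.8]{BSruled} and remarks that the statement ``follows directly from the definitions.'' Your plan is correct and is exactly the unwinding the paper alludes to---the two structural facts you isolate (triviality of the Nisnevich topology on $\mathrm{Spec}\,K$, and refinability of Nisnevich covers of $\mathbb{A}^1_K$ to elementary ones) are precisely the observations the paper records in the paragraph immediately preceding the lemma.
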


Building on \cite[Lemma 2.1]{BRS}, the next lemma simplifies the data of an $n$-ghost homotopy of a field $K$.  
\begin{lemma}\label{lemma on lines of brs}
   Let $\mathcal{F} \in Shv(Sm/k)_{Nis}$. Suppose we have the following data:
   \begin{enumerate}
       \item $(U,V \xrightarrow{p} \mathbb{A}^1_k)$ is an elementary Nisnevich cover of $\mathbb{A}^1_k$ with $W=U\times_{\mathbb{A}^1_k}V$.
       \item $h_U \in \mathcal{F}(U)$, $h_V \in \mathcal{F}(V)$, and there exists a dense open subscheme $W' \subset W$ such that $pr_U^*|_{W'}(h_U)$ and $pr_V^*|_{W'}(h_V)$ have the same image in $\mathcal{S}(\mathcal{F})(W')$, where $pr_U$ and $pr_V$ are the projection maps from $W$ to $U$ and $V$, respectively.
   \end{enumerate}
   Let $h=h_U \coprod h_V$. Then $[h(P)]_2=[h(P')]_2$ for any $P, P' \in (U \coprod V)(k)$.
\end{lemma}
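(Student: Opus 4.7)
The plan is to promote the given data to the sheaf $\mathcal{S}(\mathcal{F})$ and then apply \cite[Lemma 2.1]{BRS} to that sheaf. Set $\tilde h_U := [h_U]_1 \in \mathcal{S}(\mathcal{F})(U)$ and $\tilde h_V := [h_V]_1 \in \mathcal{S}(\mathcal{F})(V)$. The hypothesis says exactly that $pr_U^*(\tilde h_U)$ and $pr_V^*(\tilde h_V)$ agree on the dense open $W' \subseteq W$, and the desired conclusion $[h(P)]_2 = [h(P')]_2$ is the same as $[\tilde h(P)]_1 = [\tilde h(P')]_1$ in $\mathcal{S}(\mathcal{S}(\mathcal{F}))(k) = \mathcal{S}^2(\mathcal{F})(k)$. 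So if I could promote the agreement from $W'$ to all of $W$, then the Nisnevich sheaf property would glue $\tilde h_U, \tilde h_V$ into a global section of $\mathcal{S}(\mathcal{F})$ on $\mathbb{A}^1_k$ and \cite[Lemma 2.1]{BRS} applied to $\mathcal{S}(\mathcal{F})$ would finish the job.

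The key observation is that $W$ is \'etale over an open subscheme of $\mathbb{A}^1_k$, hence smooth of dimension one, so $Z := W \setminus W'$ is a finite set of closed points. The cover refinement will be $(U, V \setminus Z)$, where $Z$ is viewed inside $V$ through the open immersion $W = p^{-1}(U) \hookrightarrow V$. I would check that this is still an elementary Nisnevich cover: the inclusion $Z \subseteq p^{-1}(U)$ ensures that no point of $\mathbb{A}^1_k$ is left uncovered (the image $p(Z)$ already lies in $U$), while the isomorphism $(V \setminus p^{-1}(U)) \xrightarrow{\sim} \mathbb{A}^1_k \setminus U$ is unaffected. A direct computation gives $U \times_{\mathbb{A}^1_k}(V \setminus Z) = W \setminus Z = W'$, so that the hypothesis now expresses precisely the gluing data on the full fiber product of the refined cover. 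The sections $\tilde h_U$ and $\tilde h_V|_{V \setminus Z}$ thus glue to $\tilde h \in \mathcal{S}(\mathcal{F})(\mathbb{A}^1_k)$, and \cite[Lemma 2.1]{BRS} applied to $\mathcal{S}(\mathcal{F})$ with this refined cover yields $[h(Q)]_2 = [h(Q')]_2$ for all $Q, Q' \in \bigl(U \coprod (V \setminus Z)\bigr)(k)$.

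What remains is to account for the $k$-points of $V$ that were removed, namely the $k$-rational elements of $Z$. For $P' \in Z(k)$ with $a := p(P') \in U(k)$, it suffices by transitivity to show the local equality $[h_V(P')]_2 = [h_U(a)]_2$ in $\mathcal{S}^2(\mathcal{F})(k)$. I would prove this by working in the henselization $R := \mathcal{O}^h_{W, P'}$, a henselian discrete valuation ring with residue field $k$ and fraction field $K_{P'}$: the pullbacks of $h_U$ and $h_V$ lie in $\mathcal{F}(\Spec R)$, agree on the generic fiber in $\mathcal{S}(\mathcal{F})(K_{P'})$ (since $\Spec K_{P'}$ maps into $W'$), and unwinding the definition of $\mathcal{S}$ as a Nisnevich sheafification produces a finite chain of $\mathbb{A}^1$-homotopies over $K_{P'}$ connecting the two. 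The last step is a specialization argument showing that such a chain at the generic point of a henselian DVR descends to an equality in $\mathcal{S}^2(\mathcal{F})$ at the closed $k$-point, at the cost of one additional iteration of the functor $\mathcal{S}$.

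The main obstacle is this last specialization argument for the degenerate case: the refinement of the cover and the appeal to \cite[Lemma 2.1]{BRS} are essentially formal once the setup is in place, but controlling how a chain of $\mathbb{A}^1$-homotopies over $K_{P'}$ propagates to the closed fiber inside $\mathcal{S}^2(\mathcal{F})$ is what requires genuine work; it is also the source of the jump from $\mathcal{S}$ to $\mathcal{S}^2$ in the conclusion.
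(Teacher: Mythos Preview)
Your first step---refining the cover to $(U, V\setminus Z)$ so that the fiber product becomes exactly $W'$---is correct and is precisely what the paper does. This already yields $[h(P)]_2=[h(P')]_2$ for all $P,P'\in \bigl(U\coprod (V\setminus Z)\bigr)(k)$.

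The divergence, and the gap, is in how you handle the finitely many removed points $P'\in Z(k)$. Your proposed henselization/specialization argument is not needed, and as you yourself note, it is the one part you cannot complete: there is no ready-made statement guaranteeing that an equality in $\mathcal{S}(\mathcal{F})$ over the fraction field of a henselian DVR specializes to an equality in $\mathcal{S}^2(\mathcal{F})$ over the residue field. The paper avoids this entirely by a second, purely combinatorial, cover refinement. Given $P'\in Z(k)$ with $a=p(P')\in U(k)$, set $U'=U\setminus\{a\}$ and $V''=(V\setminus Z)\cup\{P'\}$ (removing, if necessary, any other preimages of $a$ that lie in $V\setminus Z$). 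Then $(U',V'')$ is again an elementary Nisnevich cover of $\mathbb{A}^1_k$: the point $a$ is now covered only by $P'$, and the isomorphism over $\mathbb{A}^1_k\setminus U$ is unaffected. Its fiber product $U'\times_{\mathbb{A}^1_k}V''$ is contained in $W'$, so the gluing condition is inherited and the data $\bigl((U',V''),h_U|_{U'},h_V|_{V''}\bigr)$ is a $1$-ghost homotopy. This second ghost homotopy identifies $[h_V(P')]_2$ with $[h(P)]_2$ for any $P\in U'(k)$, and since $U'(k)\subset U(k)$ was already handled in the first step, transitivity finishes the proof.

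So the missing idea is simply this: rather than trying to push homotopies from a generic point down to a closed point, you can trade the point $a\in U$ for the point $P'\in V$ in the cover. No henselization, no specialization---just a second choice of refinement.
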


\begin{proof}
We claim that there exists an open subscheme $V' \subseteq V$ such that the pair $(U, V' \xrightarrow{p|_{V'}} \mathbb{A}^1_k)$ is an elementary Nisnevich cover and $W'=U \times_{\mathbb{A}^1_k} V$. 

Indeed, let $Z=\mathbb{A}^1_k \setminus U$. Since $(U, V)$ is an elementary Nisnevich cover of $\mathbb{A}^1_k$, the restriction $p^{-1}(Z) \to Z$ is an isomorphism. Define $V'=W' \cup p^{-1}(Z)$. Since $\mathbb{A}^1_k$ is 1-dimensional, $V'$ is an open subscheme of $V$. Clearly, $V'$ is the required scheme. Let $h_{V'}=h_V|_{V'}$. Then, 
\[\left((U, V'\xrightarrow{p|_{V'}}\mathbb A^1_k), W', h_U, h_{V'}\right)\]
forms a data for $1$-ghost homotopy $\mathcal H$.

Therefore, for any $P, P' \in (U \coprod V')(k)$, we have $[h(P)]_2=[h(P')]_2$ via $\mathcal H$. 

Now, similarly for any $Q\in (V\setminus V')(k)$,  there exists a Nisnevich cover  $(U'\subset U,V''\subset V)$ such that $W'=U' \times_{\mathbb{A}^1_k} V''$ and $P\in V''(k)$. Indeed, let $V''=V'\cup\{Q\}$ and $U'=U\setminus\{p(Q)\}$. Then, 
\[\left((U', V''\xrightarrow{p}\mathbb A^1_k), W',  h_{U'}, h_{V''}\right)\]
forms the data for a $1$-ghost homotopy $\mathcal H'$. Thus,  $[h(Q)]_2=[h(P)]_2$ for any $P \in (U \coprod V')(k)$ via a chain of $\mathcal{H}$ and $\mathcal{H}'$. This completes the proof.
\end{proof}

We present a straightforward consequence of the above lemma, which will be used in the proof of Theorem \ref{main theorem for Q}.  

\begin{lemma}\label{lemma on stabilisation}
    Let $\mathcal{F} \in Shv(Sm/k)$. Suppose that there exists $n \in \mathbb{N}$ such that $\mathcal{S}^n(\mathcal{F})(K)=\mathcal{S}^{n+1}(\mathcal{F})(K)$ for all $K/k$. Then
    \[\mathcal{L}(\mathcal{F})(K)=\mathcal{S}^n(\mathcal{F})(K) \text{ for all } K/k.\]
\end{lemma}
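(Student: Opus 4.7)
The plan is to prove, by induction on $m\geq n$, that $\mathcal{S}^m(\mathcal{F})(K)=\mathcal{S}^n(\mathcal{F})(K)$ for every finitely generated field extension $K/k$. Since $\Spec K$ is a point of the Nisnevich site on $Sm/k$, sheafification is trivial there and the filtered colimit defining $\mathcal{L}(\mathcal{F})$ commutes with evaluation at $K$; so once the inductive claim is established, $\mathcal{L}(\mathcal{F})(K)=\lim_{\to m}\mathcal{S}^m(\mathcal{F})(K)$ stabilizes at $\mathcal{S}^n(\mathcal{F})(K)$, giving the conclusion.

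The cases $m=n$ and $m=n+1$ are respectively trivial and the hypothesis. For the inductive step from $m$ to $m+1$, I take $x,y\in\mathcal{F}(K)$ with $[x]_{m+1}=[y]_{m+1}$; by the ghost-homotopy characterization of equality in $\mathcal{S}^{m+1}(\mathcal{F})(K)$ together with the usual reduction along a chain, I may assume $x$ and $y$ are joined by a single $(m+1)$-ghost homotopy with data $((U,V),h_U,h_V,y_0,y_1)$, where $h_U\in\mathcal{F}(U)$ and $h_V\in\mathcal{F}(V)$ satisfy $pr_U^*(h_U)=pr_V^*(h_V)$ in $\mathcal{S}^{m+1}(\mathcal{F})(W)$. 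Assuming $n\geq 1$, the plan is then to apply Lemma \ref{lemma on lines of brs} to the sheaf $\mathcal{S}^{n-1}(\mathcal{F})$: its conclusion yields $[x]_{n+1}=[y]_{n+1}$ in $\mathcal{S}^2(\mathcal{S}^{n-1}(\mathcal{F}))(K)=\mathcal{S}^{n+1}(\mathcal{F})(K)$, and the hypothesis of the present lemma then upgrades this to $[x]_n=[y]_n$, which is what is needed.

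To feed Lemma \ref{lemma on lines of brs} the required input, I must exhibit a dense open $W'\subset W$ on which $[h_U]_{n-1}|_{W'}$ and $[h_V]_{n-1}|_{W'}$ agree in $\mathcal{S}^n(\mathcal{F})(W')$. Restricting to the generic point $\eta_W$ of an irreducible component of $W$, the equality in $\mathcal{S}^{m+1}(\mathcal{F})(\eta_W)$ is given; the inductive hypothesis applied at the field $\eta_W$ provides $\mathcal{S}^j(\mathcal{F})(\eta_W)=\mathcal{S}^n(\mathcal{F})(\eta_W)$ for $n\leq j\leq m$, and the hypothesis of the present lemma applied at $\eta_W$ adds the identification $\mathcal{S}^n(\mathcal{F})(\eta_W)=\mathcal{S}^{n+1}(\mathcal{F})(\eta_W)$. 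Combining these should descend the equality through the tower of surjections to $\mathcal{S}^n(\mathcal{F})(\eta_W)$, which then propagates to a dense open $W'$ via the standard filtered-colimit description of Nisnevich sections at a generic point. The main technical obstacle I anticipate is exactly this descent step at $\eta_W$: controlling the surjection $\mathcal{S}^m(\mathcal{F})(\eta_W)\to\mathcal{S}^{m+1}(\mathcal{F})(\eta_W)$ amounts to the very inductive step being proved at the field $\eta_W$, and avoiding the resulting circularity will require a careful, uniform-in-$K$ formulation of the induction. The edge case $n=0$, if allowed by the convention for $\mathbb{N}$, will need a separate argument applying Lemma \ref{lemma on lines of brs} directly to $\mathcal{F}$, since $\mathcal{S}^{-1}(\mathcal{F})$ is undefined.
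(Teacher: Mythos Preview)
Your overall strategy matches the paper's exactly: induct on $m\geq n$, unpack a ghost homotopy, restrict to the generic points of $W$, invoke the inductive hypothesis there, spread out to a dense open $W'$, and finish via Lemma~\ref{lemma on lines of brs}. The circularity you worry about, however, is a phantom created by an off-by-one in your ghost-homotopy indexing.

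Equality $[x]_{m+1}=[y]_{m+1}$ is witnessed by a ghost homotopy whose gluing condition lives in $\mathcal{S}^{m}(\mathcal{F})(W)$, \emph{not} in $\mathcal{S}^{m+1}(\mathcal{F})(W)$. The point is that data $(h_U,h_V)$ with $[pr_U^*h_U]_j=[pr_V^*h_V]_j$ glues to a morphism $\mathbb{A}^1_K\to\mathcal{S}^{j}(\mathcal{F})$, i.e.\ an $\mathbb{A}^1$-homotopy \emph{inside} $\mathcal{S}^{j}(\mathcal{F})$, and therefore certifies equality one level higher, in $\mathcal{S}^{j+1}(\mathcal{F})(K)$. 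This is precisely how the paper's own proof reads: it unpacks $[Q_1]_{n+m+1}=[Q_2]_{n+m+1}$ as an $(n{+}m)$-ghost homotopy with gluing in $\mathcal{S}^{n+m}$. With the correct index, at each generic point $\eta_{W_j}$ you already have equality in $\mathcal{S}^{m}(\mathcal{F})(\eta_{W_j})$, and the inductive hypothesis $\mathcal{S}^{m}(\mathcal{F})(L)=\mathcal{S}^{n}(\mathcal{F})(L)$ for all $L/k$ immediately gives equality in $\mathcal{S}^{n}(\mathcal{F})(\eta_{W_j})$; no appeal to the step under proof is needed. After spreading out and applying Lemma~\ref{lemma on lines of brs} (to $\mathcal{S}^{n-1}(\mathcal{F})$ when $n\geq 1$, as you plan) you obtain $[x]_{n+1}=[y]_{n+1}$, and the standing hypothesis $\mathcal{S}^{n}=\mathcal{S}^{n+1}$ on fields closes the argument. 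For $n=0$ the same reasoning works without shifting: the gluing then descends to $\mathcal{F}(W')$ itself, and the shrinking construction in the proof of Lemma~\ref{lemma on lines of brs} produces an honest morphism $\mathbb{A}^1_K\to\mathcal{F}$.
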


\begin{proof}
    We prove that $\mathcal{S}^n(\mathcal{F})(K)=\mathcal{S}^{n+i}(\mathcal{F})(K)$ for all $K/k$ and for all $i \geq 0$ by induction on $i$. 

    The base case $i=1$ follows from the assumption. Suppose that the statement holds for some $i=m>1$, i.e., $\mathcal{S}^n(\mathcal{F})(K)=\mathcal{S}^{n+m}(\mathcal{F})(K)$ for all $K/k$. We show that it holds for $i=m+1$.

    Suppose $[Q_1]_{n+m+1}= [Q_2]_{n+m+1} \in\mathcal S^{n+m+1}(F)(K)$ by an $(n+m)$-ghost homotopy $\mathcal H: \mathbb{A}^1_K \to \mathcal{S}^{n+m}(\mathcal{F})$ which is given given by the data
    \[
    \left((U, V \xrightarrow{p} \mathbb{A}^1_K), W, H_U, H_V, P_1, P_2\right)
    \]
    such that $(H_U\sqcup H_V)(P_i)=Q_i$ and $[pr_U^*(H_U)]_{n+m}=[pr_V^*(H_V)]_{n+m}$ in $\mathcal{S}(\mathcal{F})(W)$.

    If $W=\sqcup_{j=1}^{r} W_j$ is the decomposition of $W$ into irreducible components, we have for each $j$,
    \[
    [pr_U^*|_{k(W_j)}(H_U)]_{n+m}=[pr_V^*|_{k(W_j)}(H_U)]_{n+m}.
    \]
    By the induction hypothesis, for each $j$,
    \[
    [pr_U^*|_{k(W_j)}(H_U)]_{n}=[pr_V^*|_{k(W_j)}(H_U)]_{n}.
    \]
    Hence, there exists a dense open subscheme $W'$ of $W$ such that
    \[
    [pr_U^*|_{W'}(H_U)]_{n}=[pr_V^*|_{W'}(H_U)]_{n}.
    \]
    By Lemma \ref{lemma on lines of brs}, we conclude that $[Q_1]_n=[Q_2]_n$, completing the proof.
\end{proof}

The first assertion in the following theorem is due to Asok and Morel. Although they only state it for proper varieties, their proof also holds in the case of smooth non-proper varieties. The second assertion is an easy consequence of their proof. 

\begin{theorem}\cite[Proposition 2.3.8]{AM} \label{AM}  
Let $\pi: X \to Y$ be the blow-up of a smooth scheme along a smooth subscheme $Z$ of codimension $r+1$. Then, for any finitely generated separable field extension $K/k$, the map  
\[
\mathcal{S}(T)(K) \xrightarrow{\sim} \mathcal{S}(S)(K)
\]  
is a bijection. Moreover, suppose that $P, Q \in Y(K)$ lie on a smooth curve isomorphic to $\mathbb{A}^1_K$. If $\pi(P')=P$ and $\pi(Q')=Q$, then $P'$ and $Q'$ are identified in $\mathcal{S}(X)(K)$ via a chain of smooth rational curves, isomorphic to $\mathbb{A}^1_K$, in $X$.  
\end{theorem}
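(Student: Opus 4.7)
The plan is to treat surjectivity and injectivity of $\pi_*: \mathcal{S}(X)(K) \to \mathcal{S}(Y)(K)$ separately, and to extract the ``moreover'' clause as a byproduct of the injectivity analysis. Surjectivity is almost immediate: $\pi$ restricts to an isomorphism over $Y \setminus Z$, and for $z \in Z(K)$ the fiber $\pi^{-1}(z) \cong \mathbb{P}^r_K$ has $K$-points, so every $K$-point of $Y$ admits a lift to $X$.

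For injectivity, I would reduce, by chaining single $\mathbb{A}^1$-homotopies, to the following claim: if $y_1, y_2 \in Y(K)$ are connected by a morphism $h : \mathbb{A}^1_K \to Y$ and $x_i \in X(K)$ are \emph{any} lifts of $y_i$, then $x_1$ and $x_2$ are $\mathbb{A}^1$-chain connected in $X$ by curves isomorphic to $\mathbb{A}^1_K$. The argument rests on two ingredients. First, each fiber $\pi^{-1}(z) \cong \mathbb{P}^r_K$ for $z \in Z(K)$ is $\mathbb{A}^1$-chain connected via its lines, each of which (after removing a point) is isomorphic to $\mathbb{A}^1_K$, so any two $K$-points of a single exceptional fiber are already chain connected inside $X$ by curves $\cong \mathbb{A}^1_K$. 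Second, the universal property of the blow-up implies that any morphism from a smooth $1$-dimensional source to $Y$ which does not factor through $Z$ lifts uniquely through $\pi$ (since on a smooth curve every nonzero ideal sheaf is invertible).

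If $h$ does not factor through $Z$, the second ingredient produces a strict transform $\tilde{h}: \mathbb{A}^1_K \to X$ whose image $\tilde{C} \cong \mathbb{A}^1_K$ passes through specific lifts $\tilde{x}_i \in \pi^{-1}(y_i)$; for $y_i \notin Z$ we have $\tilde{x}_i = x_i$ automatically, while for $y_i \in Z$ both $\tilde{x}_i$ and $x_i$ lie in $\pi^{-1}(y_i) \cong \mathbb{P}^r_K$ and are joined by the first ingredient. If on the other hand $h$ factors through $Z$, then the pullback of $\pi$ along $h$ is a Zariski-locally trivial $\mathbb{P}^r$-bundle over $\mathbb{A}^1_K$; $K$-sections over an open cover of $\mathbb{A}^1_K$, combined with fiberwise lines from the first ingredient, again yield a chain of $\mathbb{A}^1_K$-curves connecting arbitrary lifts $x_1$ and $x_2$.

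The main obstacle I anticipate is the bookkeeping at the points of $h^{-1}(Z)$ in the non-factoring case: the strict transform $\tilde{C}$ meets each exceptional fiber over $h(t)$ (for $t \in h^{-1}(Z)$) in a single $K$-point, which generally does not coincide with a prescribed lift $x_i$ over $y_i = h(t)$. The first ingredient is precisely designed to absorb this mismatch, so once the framework is set up carefully the injection argument runs cleanly. The ``moreover'' clause is then automatic: the chain joining arbitrary lifts $P'$ and $Q'$ of $P$ and $Q$ consists of the strict transform of the given $\mathbb{A}^1_K \subset Y$ (or of sections of the $\mathbb{P}^r$-bundle, in the degenerate case) together with a finite chain of lines in exceptional fibers, all isomorphic to $\mathbb{A}^1_K$ as required.
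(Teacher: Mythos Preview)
The paper does not supply its own proof of this theorem; it is stated with a citation to Asok--Morel \cite[Proposition 2.3.8]{AM}, together with the remark that the second assertion is ``an easy consequence of their proof.'' So there is no in-paper argument to compare against directly.

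Your plan is the standard one and matches what one would extract from Asok--Morel: surjectivity is immediate from the existence of $K$-points in every fiber of $\pi$, and injectivity comes from lifting an $\mathbb{A}^1_K$-curve through $\pi$ via the universal property of the blow-up (the pullback of the ideal of $Z$ to a smooth curve is invertible) and then correcting the endpoints using chains of lines inside the exceptional $\mathbb{P}^r_K$-fibers. The ``moreover'' clause is exactly the content of this injectivity step, as you say.

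One small point to tighten: in the degenerate case where $h$ factors through $Z$, you appeal to ``$K$-sections over an open cover of $\mathbb{A}^1_K$,'' but open subsets of $\mathbb{A}^1_K$ are not themselves copies of $\mathbb{A}^1_K$, so this does not directly produce the curves you want. The clean fix is to observe that the pulled-back $\mathbb{P}^r$-bundle over $\mathbb{A}^1_K$ is the projectivization of a vector bundle, which is free (vector bundles on $\mathbb{A}^1_K$ over a field are trivial), so the bundle is globally $\mathbb{A}^1_K \times \mathbb{P}^r_K$ and admits a global section isomorphic to $\mathbb{A}^1_K$; then correct at the endpoints with lines in the fibers as in the generic case. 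With that adjustment your argument goes through.
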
  

\subsection{Preliminaries on Quadratic Forms}  

In this subsection, we recall some classical results from the theory of quadratic forms to ensure that this exposition is self-contained.  

\begin{definition}  
    Let $\varphi$ be a quadratic form on $V$ over $k$, and let $d \in k^*$. We say that $\varphi$ represents $d$ if there exists $v \in V$ such that $\varphi(v)=d$. We denote by $D(\varphi)$ the set of values in $k^*$ represented by $\varphi$, i.e.,
    \[
    D(\varphi) := \{d \in k^* \mid \exists v \in V \text{ such that } \varphi(v)=d\}.
    \]  
\end{definition}
\begin{notation}  
    For a quadratic form $\varphi$ on a vector space $V$, let $\< D(\varphi) \>$ denote the subgroup of $k^*$ generated by the set $D(\varphi)$. For any field extension $K/k$, let $\varphi_K$ denote the quadratic form on $V \otimes_k K$ defined by $\varphi \otimes \operatorname{id}$.  
\end{notation}  

\begin{notation}  
    For any closed point $P$ of a smooth curve $C$ over $k$, let $v_P$ denote the discrete valuation $v_P: k(C) \to \mathbb{Z}$ corresponding to the discrete valuation ring $\mathcal{O}_{C,P}$.  
\end{notation}  

The following theorem is a restatement of a theorem from \cite[Theorem 18.3]{EKM} that provides a criterion for a rational function $f \in k(t)$ to belong to the group $\< D(\varphi_{k(t)}) \>$. Although the original theorem was stated for polynomials, since we have $f \< D(\varphi_{k(t)}) \>=f^{-1}  \< D(\varphi_{k(t)}) \>$, it can be easily extended to rational functions.

\begin{proposition}[Quadratic Value Theorem]\label{QVT}
     Let $f\in k(t)$. Then, the following are equivalent.
    \begin{enumerate}
        \item There exists $a\in k^*$ such that $f(t)\in a \<D(\varphi_{k(t)})\>$.
        \item  If $P$ is a closed point of $\mathbb A^1_k$ such that $v_P(f)$ is odd, then $\varphi_{\kappa(P)}$ is isotropic.
    \end{enumerate}
\end{proposition}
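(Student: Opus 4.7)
The plan is to reduce the rational case to the polynomial case stated in \cite[Theorem 18.3]{EKM}. Write $f = g/h$ with coprime $g, h \in k[t]$. I want to show that $f$ satisfies (1) (respectively (2)) if and only if $gh$ does, after which the result reduces to the polynomial version already in the literature.

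First I would verify that $\langle D(\varphi_{k(t)})\rangle$ contains every nonzero square in $k(t)^{*}$: for any $v$ with $b := \varphi_{k(t)}(v) \neq 0$ and any $c \in k(t)^{*}$, one has $c^2 b = \varphi_{k(t)}(cv) \in D(\varphi_{k(t)})$, hence $c^2 = (c^2 b) \cdot b^{-1} \in \langle D(\varphi_{k(t)})\rangle$. In particular $h^2 \in \langle D(\varphi_{k(t)})\rangle$, so $gh = f \cdot h^2$ lies in the same coset as $f$; this is exactly the coset identity $f\langle D(\varphi_{k(t)})\rangle = f^{-1}\langle D(\varphi_{k(t)})\rangle$ mentioned above the proposition (multiply through by $f$). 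Consequently, $f \in a\langle D(\varphi_{k(t)})\rangle$ if and only if $gh \in a\langle D(\varphi_{k(t)})\rangle$, so condition (1) for $f$ and for $gh$ are equivalent.

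Next, for any closed point $P$ of $\mathbb{A}^1_k$, the parities of $v_P(f) = v_P(g) - v_P(h)$ and $v_P(gh) = v_P(g) + v_P(h)$ coincide, so the set of points $P$ at which $v_P$ of the argument is odd is the same for $f$ and for $gh$. Hence condition (2) for $f$ and for $gh$ are identical, and it suffices to prove the equivalence when $f$ is a polynomial. In that situation the statement is \cite[Theorem 18.3]{EKM}. The direction (1) $\Rightarrow$ (2) is the easy one and amounts to a local calculation at $P$: any element of $D(\varphi_{k(t)})$ with odd $v_P$-valuation specializes at $P$ to force $\varphi_{\kappa(P)}$ to be isotropic, and an odd $v_P$-valuation of $f = a \prod g_i$ forces at least one factor $g_i$ with odd $v_P(g_i)$. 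The main obstacle, which is the substantive content of \cite[Theorem 18.3]{EKM}, is the converse (2) $\Rightarrow$ (1); it is proved there by induction on $\deg f$, peeling off monic irreducible factors and using the representability hypothesis at each residue field $\kappa(P)$ to absorb them into $\langle D(\varphi_{k(t)})\rangle$.
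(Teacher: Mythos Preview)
Your proposal is correct and mirrors the paper's own treatment: the paper does not give a proof but simply remarks that \cite[Theorem 18.3]{EKM} handles the polynomial case and that the extension to rational functions follows from the coset identity $f\langle D(\varphi_{k(t)})\rangle = f^{-1}\langle D(\varphi_{k(t)})\rangle$, which is exactly your squares-lie-in-$\langle D(\varphi_{k(t)})\rangle$ observation. Your additional explicit check that the parity of $v_P(f)$ and $v_P(gh)$ agree (their difference being $2v_P(h)$) is a clean way to spell out the reduction for condition (2), which the paper leaves implicit.
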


The following lemma shows that any quadratic form over $k$ that is isotropic over the function field of a DVR must also be isotropic over its residue field. This result is well known (see \cite{CT, Knebusch}), and the proof is simple, but we include it for completeness.
\begin{lemma}\label{lemma on isotropic forms over DVR}
    Let $(R,m,\kappa)$ be a DVR containing $k$ with function field $K$. Let $\varphi$ be a quadratic form over $k$ given by $\< a_1,a_2,\dots,a_n\>$ such that $\varphi_{K}$ is isotropic. Then $\varphi_{\kappa}$ is isotropic.
\end{lemma}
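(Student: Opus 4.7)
The plan is to mimic the classical argument for isotropy lifting from the fraction field to the residue field of a DVR: start from an isotropic vector over $K$, normalize it so that it has coordinates in $R$ with at least one coordinate a unit, and then reduce modulo the maximal ideal.

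More precisely, I would proceed as follows. Suppose $\varphi_K$ is isotropic, so there exist $x_1,\ldots,x_n \in K$, not all zero, with $\sum_{i=1}^n a_i x_i^2 = 0$. Since $K = \operatorname{Frac}(R)$, after multiplying through by a common denominator I may assume $x_i \in R$ for every $i$. Let $\pi$ be a uniformizer of $R$ and set $m_0 = \min_i v_R(x_i)$, which is finite because not all $x_i$ are zero. Replacing each $x_i$ by $\pi^{-m_0} x_i$, I obtain an isotropic vector (the relation is homogeneous of degree $2$) with $x_i \in R$ for all $i$ and $v_R(x_j) = 0$ for at least one index $j$, i.e., at least one $x_j$ is a unit in $R$.

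Now reduce mod the maximal ideal $m$. Denote by $\bar{x}_i \in \kappa$ the image of $x_i$. The composition $k \hookrightarrow R \twoheadrightarrow \kappa$ is injective (since $k$ is a field), so each $a_i$ maps to a nonzero element of $\kappa$, which I still denote by $a_i$; in particular the diagonal form $\varphi_\kappa = \langle a_1,\ldots,a_n\rangle$ is defined over $\kappa$. Reducing the relation $\sum a_i x_i^2 = 0$ modulo $m$ yields $\sum a_i \bar{x}_i^2 = 0$ in $\kappa$. Since some $x_j$ was a unit, $\bar{x}_j \neq 0$, so the vector $(\bar{x}_1,\ldots,\bar{x}_n)$ is a nonzero isotropic vector of $\varphi_\kappa$. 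Hence $\varphi_\kappa$ is isotropic.

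There is no real obstacle here; the only mild subtlety is ensuring that the $a_i$ remain nonzero in $\kappa$, which is immediate from $k \hookrightarrow \kappa$, and that the normalization step producing a unit coordinate is allowed, which uses only the homogeneity of the isotropy relation and the fact that $v_R$ takes integer values so the minimum $m_0$ exists.
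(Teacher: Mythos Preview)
Your proof is correct and follows essentially the same approach as the paper: clear denominators to get an isotropic vector with coordinates in $R$, normalize so that at least one coordinate is a unit, and reduce modulo $m$. The paper's write-up is slightly terser but the argument is the same.
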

\begin{proof}
     Since $\varphi_{K}$ is isotropic, by clearing denominators, there exists $x_i\in  R $ such that \[a_1x_1^2+\dots+a_nx_n^2=0.\] 
    Each $x_i$ can be written as $x_i=t^{m_i} u_i$, where $t$ is a uniformizer ($\< t \>=m$), and $u_i$ is a unit in $R$. We may assume that not all $x_i$ belong to $m$, meaning that at least one $\bar{x}_i \neq 0$ in $\kappa$. Reducing modulo $m$, we obtain  
    \[
    a_1 \bar{x}_1^2 + \dots + a_n \bar{x}_n^2=0.
    \]  
    Thus, $\varphi_{\kappa}$ is isotropic, as required.
    \end{proof}
In particular, if we take $R$ to be the local ring of a closed point on a smooth curve $C$, the lemma implies that if $\varphi_{k(C)}$ is isotropic, then $\varphi_{k(P)}$ must be isotropic for any closed point $P$ of $C$.  

The following easy lemma follows directly from the definitions and will be useful in later sections.

\begin{lemma} \label{lemma for specialisation}
    Let $(R, m, \kappa)$ be a DVR containing $k$. Suppose $\varphi$ is a quadratic form over $k$ given by $\< a_1, a_2, \dots, a_n \>$ such that $\varphi_{\kappa}$ is anisotropic.  
    \begin{enumerate}
        \item If $y_i \in K$, then  
        \[
        v(\varphi(y_1, \dots, y_n))=\min\{ v(y_i^2) \mid 1 \leq i \leq n \}
        \]  
        and hence is even.
        
        \item Let $g \in K$ such that $g \in \< D(\varphi_{K}) \>$. If $v(g) \geq 0$, then $\bar{g}=0$ or $\bar{g} \in \< D(\varphi_{\kappa}) \>$.
    \end{enumerate}  
\end{lemma}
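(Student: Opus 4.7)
The plan is to combine the anisotropy hypothesis on $\varphi_\kappa$ with careful bookkeeping of the $t$-adic valuation, where $t$ denotes a uniformizer of $R$. Both parts follow from the elementary observation that when one factors out the minimal power of $t$ from a vector of $R$-valued coordinates, the resulting reduction modulo $m$ is nonzero, and anisotropy of $\varphi_\kappa$ then prevents any accidental cancellation.

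For part (1), I would first reduce to the case $y_i \in R$ for all $i$ (by clearing the common denominator, or by allowing negative $e_i$ below). Write each nonzero $y_i = t^{e_i} u_i$ with $u_i \in R^{*}$, and set $e_i = +\infty$ if $y_i = 0$. Put $e = \min_i e_i$ and $z_i = t^{e_i - e} u_i$, so each $z_i \in R$ and at least one $z_i$ is a unit. Then
\[
\varphi(y_1,\dots,y_n) \;=\; t^{2e}\,\varphi(z_1,\dots,z_n),
\]
and the reduction $(\bar z_1,\dots,\bar z_n) \in \kappa^n$ is nonzero. Since $\varphi_\kappa$ is anisotropic, $\varphi_\kappa(\bar z_1,\dots,\bar z_n) \ne 0$, so $\varphi(z_1,\dots,z_n)$ is a unit in $R$. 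This gives $v(\varphi(y_1,\dots,y_n)) = 2e = \min_i v(y_i^2)$, which is manifestly even.

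For part (2), I would use that $g \in \langle D(\varphi_K)\rangle$ to write $g = \prod_{j=1}^{r} \varphi_K(y^{(j)})^{\epsilon_j}$ with $\epsilon_j \in \{\pm 1\}$. Applying part (1) to each factor, $\varphi_K(y^{(j)}) = t^{2e_j} u_j$ with $u_j \in R^{*}$, and the same argument identifies $\bar u_j$ with $\varphi_\kappa(\bar z^{(j)}) \in D(\varphi_\kappa)$ for the normalized coordinate vector. Therefore
\[
g \;=\; t^{\,2\sum_j \epsilon_j e_j}\,\prod_j u_j^{\epsilon_j}.
\]
If $v(g) > 0$ then $\bar g = 0$ and we are done. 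Otherwise $v(g) = 0$ forces $\sum_j \epsilon_j e_j = 0$, so $g = \prod_j u_j^{\epsilon_j}$ is already a unit in $R$, and its reduction $\bar g = \prod_j \bar u_j^{\epsilon_j}$ lies in $\langle D(\varphi_\kappa)\rangle$.

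There is no genuine obstacle in this proof; the only point requiring care is to factor out exactly the \emph{minimal} power of $t$ from the coordinate vector in part (1), so that at least one reduced coordinate remains a unit in $\kappa$ and the anisotropy of $\varphi_\kappa$ can be invoked to rule out cancellation. Once part (1) is established, part (2) is a formal consequence obtained by expressing an element of $\langle D(\varphi_K)\rangle$ as a signed product of represented values.
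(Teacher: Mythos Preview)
Your proof is correct and follows essentially the same approach as the paper: in part (1) the paper divides through by the $y_i$ of minimal valuation (rather than factoring out $t^e$ as you do), then invokes anisotropy of $\varphi_\kappa$ to see that the normalized sum is a unit; in part (2) the paper writes $g$ as a product $h_1\cdots h_n$ with $h_i\in D(\varphi_K)$ (your exponents $\epsilon_j=\pm1$ are harmless but unnecessary, since $D(\varphi_K)$ is already closed under inverses) and specializes exactly as you do.
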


\begin{proof}
    For (1), without loss of generality assume $v(y_1)$ is minimum. Then, for all $i > 1$, we have $y_i / y_1 \in R$, and  
    \[
    v\left(\sum_{i=1}^{n} a_i y_i^2 \right)=v(y_1^2) + v\left( a_1 + \sum_{i=2}^{n} a_i \frac{y_i^2}{y_1^2} \right).
    \]
    Since $\varphi_{\kappa}$ is anisotropic, we have  
    \[
    a_1 + \sum_{i=2}^{n} a_i \left(\overline{\frac{y_i}{y_1}}\right)^2 \neq 0 \text{ in } \kappa.
    \]  
    Hence,  
    \[
    v\left(a_1 + \sum_{i=2}^{n} a_i \frac{y_i^2}{y_1^2}\right)=0,
    \]
    which implies that  
    \[
    v\left(\sum_{i=1}^{n} a_i y_i^2\right)=v(y_1^2),
    \]  
    completing the proof of (1).

    For (2), let $g=h_1 h_2 \dots h_n$, where $h_i \in D(\varphi_K)$. By (1), we can write  
    \[
    h_i=u_i t^{2j_i}
    \]  
    for some integer $j_i$, where $u_i \in R^* \cap D(\varphi_K)$ and $\< t \>=m$. Therefore, if $v(g)=0$, we obtain  
    \[
    \bar{g}=\bar{u}_1 \dots \bar{u}_n \in \< D(\varphi_{\kappa}) \>,
    \]  
    and if $v(g) > 0$, we have $\bar{g}=0$.  
\end{proof}

\begin{definition}
    Let $\phi$ be a quadratic form defined on a vector space $V$ over a field $k$.  
    The \textit{Witt index} $i_0(\phi)$ of $\phi$ is the dimension of a maximal totally isotropic subspace of $V$.  
    If $\phi$ is anisotropic (that is, $i_0(\phi)=0$), the \textit{first Witt index} $i_1(\phi)$ of $\phi$ is defined by
    \[
        i_1(\phi) := i_0(\phi_{k(\phi)}),
    \]
    where $k(\phi)$ denotes the function field of the projective quadric corresponding to $\phi = 0$.
\end{definition}

We end this subsection by stating two results on the first Witt index, which will be used to characterize $\mathbb{A}^1$-connected affine quadrics in Theorem \ref{a1connected quadric}. The first result provides an alternative definition of the first Witt index.

\begin{proposition}\cite[Corollary 25.3]{EKM}\label{alternative defn of i1}  
   Let $\psi$ be a non-degenerate anisotropic quadratic form over $F$ of dimension at least 2. Then  
    \[
    i_1(\psi)=\min \{ i_0(\psi_F) \mid F/k \text{ is a field extension with } \psi_F \text{ isotropic} \}.
    \]  
\end{proposition}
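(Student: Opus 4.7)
The plan is to prove both inequalities separately. For the easy direction, $F = k(\psi)$ itself is a field extension for which $\psi_{k(\psi)}$ is isotropic: the projective quadric $X_\psi := \{\psi = 0\} \subset \mathbb{P}(V)$ has a tautological $k(\psi)$-rational point (its generic point), so $i_1(\psi) = i_0(\psi_{k(\psi)})$ already lies in the set on the right, giving the bound $\min \leq i_1(\psi)$.

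For the reverse inequality, fix any $F/k$ with $\psi_F$ isotropic; the goal is to show $i_0(\psi_F) \geq i_1(\psi)$. An isotropic vector produces an $F$-rational point $f: \Spec F \to X_\psi$ over $k$; let $P$ denote its scheme-theoretic image, so that $f$ factors as $\Spec F \to \Spec k(P) \hookrightarrow X_\psi$. Since $\psi$ is non-degenerate, $X_\psi$ is smooth, so $\mathcal{O}_{X_\psi, P}$ is a regular local ring with fraction field $k(\psi)$ and residue field $k(P) \hookrightarrow F$. Choosing a regular system of parameters at $P$ produces a chain of DVRs and hence a $k$-place $\lambda: k(\psi) \rightsquigarrow k(P)$ whose residue field embeds in $F$. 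Granted this, it suffices to prove the Witt-index specialization bound $i_0(\psi_{k(\psi)}) \leq i_0(\psi_{k(P)})$; combined with the trivial inequality $i_0(\psi_{k(P)}) \leq i_0(\psi_F)$ under base change, this gives $i_0(\psi_F) \geq i_1(\psi)$.

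The remaining step, and the main technical obstacle, is a Witt-index specialization lemma generalizing Lemma \ref{lemma on isotropic forms over DVR} from isotropy to the full Witt index: if $(R, m, \kappa)$ is a DVR with $k \subseteq R$ and fraction field $K$, and $\psi$ is defined over $k$, then $i_0(\psi_K) \leq i_0(\psi_\kappa)$. Given a totally isotropic $K$-subspace $W \subseteq K^n$ of dimension $r$, the subtlety is that naively scaling a basis into $R^n$ and reducing modulo $m$ can drop rank. The remedy is to choose the basis so that the $r \times n$ matrix of coordinates is unimodular over $R$: if any $R$-linear combination of the current basis lies in $m R^n$, divide out the uniformizer and repeat (a Smith-normal-form / Nakayama argument, valid because $R$ is a PID and the ambient rank $n$ is finite). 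Such a basis reduces modulo $m$ to a $\kappa$-linearly independent family, and the isotropy relations $\psi(v_i) = 0$ and $b_\psi(v_i, v_j) = 0$ descend directly to $\kappa$, producing the required $r$-dimensional totally isotropic subspace for $\psi_\kappa$. With this lemma established, chaining the inequalities as above completes the proof. I expect the Nakayama-style unimodularization of the basis to be the only non-formal input; once it is in hand, the geometric construction of the place is standard and the rest is bookkeeping.
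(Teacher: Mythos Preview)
The paper does not give its own proof of this proposition; it is simply quoted as \cite[Corollary 25.3]{EKM}. Your argument is correct and is essentially the standard one: the easy inequality comes from the tautological rational point on $X_\psi$ over $k(\psi)$, and the hard inequality reduces to Witt-index specialization along a chain of DVRs linking $k(\psi)$ to the residue field $k(P)$ of any $F$-point $P$ of $X_\psi$.

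One small clean-up to your specialization lemma: rather than iteratively dividing out uniformizers, you can observe directly that for a totally isotropic $W \subset K^n$ of dimension $r$, the lattice $W \cap R^n$ is a direct summand of $R^n$. Indeed, $R^n/(W \cap R^n)$ injects into $K^n/W \cong K^{n-r}$, hence is torsion-free and therefore free over the PID $R$. An $R$-basis of $W \cap R^n$ then reduces modulo $m$ to a $\kappa$-linearly independent family, and since the coefficients of $\psi$ lie in $k \subseteq R$, the relations $\psi(v_i)=0$ and $b_\psi(v_i,v_j)=0$ descend to $\kappa$. This gives $i_0(\psi_K) \leq i_0(\psi_\kappa)$ in one stroke. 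The remaining steps (constructing the chain of DVRs from a regular system of parameters at $P$, and the trivial inequality $i_0(\psi_{k(P)}) \leq i_0(\psi_F)$) are exactly as you say.
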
  

\begin{proposition}\cite[Corollary 4.9]{Vishik}\label{Witt index of subform}  
    Let $\varphi$ be a non-degenerate anisotropic quadratic form over $F$ of dimension at least 2.  
    Let $\psi$ be a non-degenerate subform of $\varphi$. If $\operatorname{codim}_{\varphi}(\psi) \geq i_1(\varphi)$, then the form $\psi_{F(\varphi)}$ is anisotropic.  
\end{proposition}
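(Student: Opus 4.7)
The plan is to argue by contradiction, starting from the alternative characterization of the first Witt index given in Proposition~\ref{alternative defn of i1} together with Witt's cancellation theorem. Since this is a deep result from Vishik's motivic theory of quadratic forms, I expect a purely elementary approach to fall short at the last step.

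First I would set up the data: write $\varphi \cong \psi \perp \eta$ with $\dim \eta = \operatorname{codim}_\varphi(\psi) \geq i_1(\varphi)$, and record the Witt decomposition
\[
\varphi_{F(\varphi)} \cong \mathbb{H}^{i_1(\varphi)} \perp \widetilde{\varphi},
\]
where $\mathbb{H}$ denotes the hyperbolic plane and $\widetilde{\varphi}$ is the anisotropic kernel of $\varphi_{F(\varphi)}$. This is exactly the defining property of $i_1(\varphi)$ applied to the anisotropic form $\varphi$.

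Next, under the contradiction hypothesis that $\psi_{F(\varphi)}$ is isotropic, I would write $\psi_{F(\varphi)} \cong \mathbb{H}^{s} \perp \widetilde{\psi}$ with $s \geq 1$ and $\widetilde{\psi}$ anisotropic, then apply Witt cancellation to the isometry $\varphi_{F(\varphi)} \cong \psi_{F(\varphi)} \perp \eta_{F(\varphi)}$. The goal is to combine the resulting decomposition with the lower bound $\dim \eta \geq i_1(\varphi)$ and try to force $i_0(\varphi_{F(\varphi)}) > i_1(\varphi)$, contradicting the definition of $i_1$.

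The hard part will be closing this dimension count: elementary Witt-theoretic manipulations alone are insufficient, because the interaction between $\widetilde{\psi}$ and $\eta_{F(\varphi)}$ can absorb hyperbolic summands without violating the bound on $i_1(\varphi)$. The full content of the proposition rests on the motivic decomposition of projective quadrics and the theory of upper motives developed by Karpenko and Vishik, which is what underpins Corollary~4.9 of Vishik cited in the statement. In practice I would simply invoke that corollary as a black box, as the paper does, since reproving it from scratch would require developing substantial motivic machinery outside the scope of the present paper.
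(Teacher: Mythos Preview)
The paper does not prove this proposition at all: it is simply stated with the citation to \cite[Corollary~4.9]{Vishik} and used later as a black box. Your proposal correctly recognizes this, and your concluding sentence---that in practice you would invoke Vishik's corollary directly, as the paper does---is exactly right. The elementary Witt-theoretic sketch you outline is a reasonable warm-up, and you are honest that it cannot close without the motivic input; there is nothing further to compare.
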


\section{Classification of affine quadrics}

We assume that $k$ is a field  of characteristic $0$.
\begin{notation}\label{definition of quadrics} Let $\psi$ be a regular quadratic form over $k$ in $n$-variables. We will denote by $Q^{\psi}$, the hypersurface in $\mathbb A^n_k$ which is the locus of zeroes of the polynomial $\psi(x_1, \ldots, x_n)-1$, where $x_1, \ldots, x_n$ are the coordinate variables on $\mathbb{A}^n_k$. 
\end{notation}

Note that $Q^{\psi}$ is a smooth variety. 

\begin{theorem}
\label{theorem standard form}
    Any smooth quadratic hypersurface in $\mathbb A^n_k$ is isomorphic to $\mathbb A^{n-1}_k$ or $Q^\psi\times \mathbb A^{n-m}_k$ for some regular quadratic form $\psi$ in $m$ variables, where $1\leq m\leq n$. 
\end{theorem}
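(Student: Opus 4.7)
The plan is to normalize the defining polynomial of a smooth quadratic hypersurface $X \subset \mathbb{A}^n_k$ by affine changes of coordinates, then read off its structure from the normal form. Write the defining polynomial as $f = \psi_0 + L_0 + c_0$, where $\psi_0$ is a nonzero quadratic form, $L_0$ is linear, and $c_0 \in k$. Since $\operatorname{char} k = 0$, a linear change of variables diagonalizes $\psi_0$ to $\sum_{i=1}^m a_i y_i^2$ with each $a_i \in k^*$ and $m = \operatorname{rank} \psi_0$.

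Next I would complete the square in each $y_i$ for $1 \leq i \leq m$, using a translation $z_i = y_i + b_i/(2a_i)$ to absorb the coefficient of $y_i$ in $L_0$ into $a_i y_i^2$. After these translations, $f$ takes the form $\sum_{i=1}^m a_i z_i^2 + L'(y_{m+1},\ldots,y_n) + c'$, where $L'$ involves only the coordinates absent from the quadratic part.

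The argument then splits into cases according to $L'$ and $c'$. If $L' \neq 0$, a further linear change of coordinates in $y_{m+1},\ldots,y_n$ lets us assume $L' + c' = y_{m+1}$; then $X$ is the graph of $y_{m+1} = -\sum a_i z_i^2$ in the remaining coordinates and so $X \cong \mathbb{A}^{n-1}_k$. If $L' = 0$ and $c' \neq 0$, dividing by $-c'$ brings the equation to $\psi(z) - 1 = 0$ with the regular form $\psi = \langle -a_1/c',\ldots,-a_m/c' \rangle$, and since $y_{m+1},\ldots,y_n$ are free, $X \cong Q^\psi \times \mathbb{A}^{n-m}_k$. The remaining case $L' = 0 = c'$ gives $f = \sum_{i=1}^m a_i z_i^2$; this is non-reduced when $m=1$, and when $m \geq 2$ all partials vanish at the origin, which lies on the hypersurface, so $X$ is singular. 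Hence smoothness of $X$ rules this case out.

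The only step that requires genuine care is this last exclusion: I would apply the Jacobian criterion to confirm that a pure quadratic equation with no constant term never defines a smooth hypersurface in $\mathbb{A}^n_k$. The remainder of the proof is elementary linear algebra over a characteristic-zero field, so the main content of the argument is the bookkeeping of the coordinate changes rather than any conceptual difficulty.
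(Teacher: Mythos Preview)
Your proposal is correct and follows essentially the same approach as the paper's proof: diagonalize the quadratic part, complete the square to eliminate the linear terms in the variables appearing quadratically, and then split into cases according to whether a linear term in the remaining variables survives and whether the constant term vanishes. The paper organizes the bookkeeping slightly differently (treating $n=1$ as a separate base case and not fully separating the variables before completing squares), but the content is identical.
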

\begin{proof}
    First, we show the result for $n=1$. Let $Q$ be the closed subvariety of $\mathbb A^1_k$  cut out by $ax_1^2+bx_1+c=0$. Then, one can check that
   \[ax_1^2+bx_1+c=\frac{1}{4a}\left ((2ax_1+b)^2-(b^2-4ac)\right ).\]
  Since $Q$ is smooth, we have $b^2-4ac\neq 0$, and hence $Q\cong Q^{\psi}$ where $\psi=\< 1/(4a(b^2-4ac))\>$.
   
   For $n> 1$, let $Q$ be the hypersurface in $\mathbb A^n_k$ cut out by
   \[\sum_{1\leq i\leq j\leq n} a_{i,j}x_ix_j +\sum_{i=1}^{n}b_ix_i +c.\]
   Since any quadratic form in $n$ -variables is equivalent to a diagonal form, there exists a linear automorphism of $\mathbb A^n_k$ such that $x_i\mapsto X_i:=\sum_{j=1}^n c_{i,j}x_i$ and 
\[\sum_{1\leq i\leq j\leq n} a_{i,j}x_ix_j+\sum_{i=1}^{n}b_ix_i+c=\sum_{i=1}^{n}A_iX_i^2+\sum_{i=1}^{n}B_iX_i+c.\]
for some $A_i,B_i\in k$.
Now, if $A_i$ and $B_i$ are both non-zero for some fixed $i$, then by the Case $n=1$,
\[A_iX_i^2+B_iX_i= \frac{1}{4A_i}\left ((2A_iX_i+B_i)^2-B_i^2\right ).\]
Hence, there exists a linear automorphism of $\mathbb A^n_k$ such that $Q$ is isomorphic to $Q'$ where $Q'$ is defined by
\[\sum_{i=1}^{m}a_iy_i^2+d=0 \quad \text{or }\sum_{i=1}^{r}a_iy_i^2+\sum_{i=r+1}^{m}a_iy_i+d=0\]
where $a_i\neq 0$ for all $i$ and $ m\leq n$.

In the first case, since $Q$ is smooth we have $d\neq 0$, and $Q'$ is isomorphic to  $Q^{\psi}\times \mathbb A^{n-m}$ where $\psi=(-1/d)\< a_1,\dots,a_m\>$. In the second case, there are linear terms, and hence $Q'$ is isomorphic to the hypersurface cut out by
\[
f(y_1, \dots, y_{m-1}) + a_m y_m=0,
\]
which implies that $Q' \cong \Spec k[y_1, \dots, \widehat{y_m}, \dots, y_n]$.
\end{proof}

\begin{remark}
\label{remark non-smooth case}
    From the previous proof, we can deduce that if $X$ is a non-smooth quadratic hypersurface in $\mathbb{A}^n_k$, then $X$ is isomorphic to the affine variety
    \[
    \Spec k[x_1, \dots, x_n] /\< \psi(x_1, \dots, x_m) \>
    \]
    for some quadratic form $\psi$ in $m$ variables. It is straightforward to check that any map $U \rightarrow X$ can be made $\mathbb{A}^1$ homotopic to the constant map $U \rightarrow \Spec k \xrightarrow{(0, \dots, 0)} X.$ Therefore, $\mathcal{S}(X)=*$ trivially in this case.
\end{remark}

The previous theorem shows that it is sufficient to examine quadratic surfaces $Q^{\psi}$. 
We will first consider the easy cases when dimension of $Q^{\psi}$ is less than $2$.

\begin{theorem}
    Let $\psi$ be a regular quadratic form in $n\leq 2$ variables. Then for every finitely generated field extension $K/k$, \[\pi_0^{\mathbb A^1}(Q^{\psi})(K)=\mathcal S(Q^{\psi})(K)=Q^{\psi}(K).\]
\end{theorem}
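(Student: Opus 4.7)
The plan is to show that $Q^\psi$ is $\mathbb{A}^1$-rigid as a Nisnevich sheaf on $Sm/k$, i.e.\ every morphism $\mathbb{A}^1_U \to Q^\psi$ factors through the projection $\mathbb{A}^1_U \to U$ for every smooth $k$-scheme $U$. Once this is established, $\mathcal{S}(Q^\psi) = Q^\psi$ as sheaves, hence $\mathcal{S}^n(Q^\psi) = Q^\psi$ for all $n$, and $\pi_0^{\mathbb{A}^1}(Q^\psi) = \mathcal{L}(Q^\psi) = Q^\psi$, which yields the theorem. Since $Q^\psi$ is closed in $\mathbb{A}^n_k$, any such morphism is described by functions in $\mathcal{O}(U)[t]$; the vanishing of the positive-degree coefficients can be checked at the generic point of each connected component of $U$, so it suffices to prove the field case: for every finitely generated field extension $K/k$, every morphism $\mathbb{A}^1_K \to Q^\psi$ is constant.

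When $n=1$, Theorem \ref{theorem standard form} lets us take $Q^\psi = \Spec k[x]/(ax^2-1)$ with $a \in k^*$, which is a zero-dimensional \'etale $k$-scheme. Any morphism from the connected scheme $\mathbb{A}^1_K$ to such a scheme factors through a single closed point, hence is constant.

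When $n=2$, let $\bar{Q}^\psi \subset \mathbb{P}^2_k$ denote the smooth projective conic obtained by homogenizing $\psi(x_1,x_2) - x_0^2 = 0$. Any morphism $\mathbb{A}^1_K \to Q^\psi_K$ extends uniquely to $f \colon \mathbb{P}^1_K \to \bar{Q}^\psi_K$. If $f$ were nonconstant, its image would contain a $K$-rational point, so the smooth conic $\bar{Q}^\psi_K$ would possess a $K$-point and hence be isomorphic to $\mathbb{P}^1_K$. The boundary $D := \bar{Q}^\psi_K \setminus Q^\psi_K$, cut out by $x_0 = 0$, is a degree-two closed subscheme, and since $f^{-1}(Q^\psi_K) \supseteq \mathbb{A}^1_K$, the effective divisor $f^*D$ has degree $2\deg f \geq 2$ but is supported on $\{\infty\}$. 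If $\psi_K$ is isotropic then $D = [P_1] + [P_2]$ with $P_1 \neq P_2$ two $K$-rational points, and the support condition forces $f(\infty) = P_1 = P_2$, a contradiction. If $\psi_K$ is anisotropic then $D$ is a single closed point whose residue field is a nontrivial quadratic extension $L/K$, and the condition $f(\infty) \in D$ would require a $K$-algebra embedding $L \hookrightarrow K$, which is impossible. Hence $f$ is constant.

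The main subtlety lies in the anisotropic subcase for $n=2$: the pure degree count does not rule out a nonconstant $f$, and one must additionally exploit the residue-field obstruction at $\infty$ to derive the contradiction.
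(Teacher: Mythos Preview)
Your argument is correct. Both your proof and the paper's hinge on the same underlying geometry---the projective completion $\bar Q^{\psi}_K$ is a smooth conic which becomes $\mathbb{P}^1_K$ once it has a $K$-point---but the two proofs package this differently.

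The paper's proof is shorter: for $n=2$ it simply observes that when $Q^{\psi}(K)\neq\varnothing$ one has $Q^{\psi}_K\cong\mathbb{A}^1_K\setminus Q^{\langle a_1/a_2\rangle}$, a strict open subscheme of $\mathbb{A}^1_K$, and then invokes the well-known fact that proper open subschemes of $\mathbb{A}^1$ are $\mathbb{A}^1$-rigid. Your route is more self-contained: rather than citing that fact, you prove rigidity directly by extending a putative nonconstant map to $f:\mathbb{P}^1_K\to\bar Q^{\psi}_K$ and deriving a contradiction from the support and residue-field constraints on $f^*D$. This has the advantage of not depending on an external statement about open curves, at the cost of a slightly longer case analysis; it also makes explicit the reduction from sheaf-level rigidity to the field case via the injectivity of $\mathcal{O}(U)\hookrightarrow k(U)$ for smooth integral $U$, which the paper leaves implicit. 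One small remark: your sentence ``$\pi_0^{\mathbb{A}^1}(Q^{\psi})=\mathcal{L}(Q^{\psi})$'' as sheaves is justified because an $\mathbb{A}^1$-rigid sheaf is $\mathbb{A}^1$-local (so $\pi_0^{\mathbb{A}^1}$ is the identity on it); alternatively, for the theorem as stated you only need the bijection on field points, which follows from \cite[Theorem 1.1]{BRS}.
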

\begin{proof}
    First, we will consider $n=1$. Then $\psi$ is of the form $\<a\>$ for some $a \in k^*$ and $Q^{\<a\>}$ is a closed subscheme of $\mathbb A^1_k$ of dimension $0$ cut out by
    \[ ax_1^2-1.\]
    If $a$ is a square in $k$, then $Q^{\<a\>}\cong \Spec  k\coprod \Spec  k$ and if $a$ is not a square in $k$, then $Q^{\<a\>}\cong \Spec  k(\sqrt a)$.  Hence, the result is clear in this case.

Now, suppose $n=2$ and let $X:=Q^{\psi}$ where $\psi=\<a_1,a_2\>$. Then $X$ is given by
    \[a_1x_1^2+a_2x_2^2-1,\]
    where $a_1,a_2\neq 0$.
    If $X(K)\neq \emptyset$, then $X_K\cong \mathbb A^1_K\setminus Q^{\<a_1/a_2\>}$, hence $X_K$ is a strictly open subscheme of $\mathbb A^1_K$. Therefore, $\pi_0^{\mathbb A^1}(X)(K)=X(K)$ for all $K/k$ as required.
\end{proof}

\section{$\mathbb{A}^1$-connected components of quadric hypersurfaces}
\label{section connected components}
In this section, we prove Theorem \ref{main theorem for Q}. We begin with a brief discussion of the setup.

Let $\psi$ be a regular quadratic form in $n \geq 3$ variables over a field $k$ of characteristic $0$. For any finitely generated field extension $F/k$, we have the sequence of sets $\{\mathcal{S}^n(Q^{\psi})(F)\}_{n  \geq 1}$, along with the canonical surjections $\mathcal{S}^n(Q^{\psi})(F) \to \mathcal{S}^{n+1}(Q^{\psi})(F)$. The colimit of this diagram of sets is $\mathcal{L}(Q^{\psi})(F)$.  

Since $\pi_0^{\mathbb{A}^1}(Q^{\psi})(F) \to \mathcal{L}(Q^{\psi})(F)$ is a bijection for all finitely generated field extensions $F/k$ by \cite[Theorem 1.1]{BRS}, a major part of the proof of the Theorem \ref{main theorem for Q} reduces to showing that for any finitely generated field extension $F/k$, we have
    \[\mathcal L(Q^{\psi})(F):=\lim_{n}\mathcal S^n(Q^{\psi})(F)=\mathcal S^m(Q^{\psi})(F),\]
where $m=1$ when $\psi$ is isotropic and $m=2$ when $\psi$ is anisotropic.

For the sets $\mathcal{S}^n(Q^{\psi})(F)$ or $\pi_0^{\mathbb{A}^1}(Q^{\psi})(F)$ to be non-empty, it is necessary and sufficient that $Q^{\psi}$ should have an $F$-rational point. We observe that $Q^{\psi}$ has an $F$-rational point if and only if its projective completion, i.e. its closure in $\#P^n_k$, has an $F$-rational point. 

\begin{notation}\label{notaion for projective completion}
    Let $\bar{Q}^{\psi}$ denote the projective completion of $Q^{\psi}$ in $\mathbb{P}^n_k$, defined as the locus of zeroes of the homogeneous polynomial $\psi(x_1, \ldots, x_n)-x_{n+1}^2$. Let $H^{\psi}$ denote the hyperplane section at infinity of $\bar{Q}^{\psi}$, i.e. $H^{\psi} := \bar{Q}^{\psi}-Q^{\psi}$. The quadratic form $\psi$ corresponds to the projective quadric $H^{\psi}$.
\end{notation}

Since $\psi \perp \< -1 \>$ is the quadratic form corresponding to the projective quadric $\bar{Q}^{\psi}$, the isotropy of this form is detected by the  existence of a $k$-rational point on $\bar{Q}^{\psi}$, and hence also on $Q^{\psi}$. In other words, $Q^{\psi}$ has a $k$-rational point if and only if $\psi\perp\<-1\>$ is isotropic, which is equivalent to $1\in D(\psi)$. Thus, there exists a regular form $\phi$ in $n-1$ variables such that
\[ \psi \sim  \< 1\> \perp (-\phi),\quad \text{and}\quad \psi\perp\<-1\>\sim \<1,-1\>\perp (-\phi).\]
This form $\phi$ will play a crucial role in the proof of Theorem \ref{main theorem for Q}. (The minus sign in $(-\phi)$ is merely to ensure that $1$ is an element of $D(\phi)$ whenever $\psi$ is isotropic.)

As the statement of Theorem \ref{main theorem for Q} suggests, we will see that the number of iterations of $\mathcal{S}$ required for the stabilisation of field-valued sections of $\mathcal{L}(Q^{\psi})$ depends on the isotropy of $\psi$, or equivalently, on the existence of a $k$-rational point on $H^{\psi}$. 

The case when $\psi$ is isotropic is dealt with in subsection \ref{section isotropic case}. The results in this case will be useful for our investigation of the anisotropic case, which is presented in subsection \ref{section anisotropic case}. 

\subsection{The isotropic case}
\label{section isotropic case}

Let $\psi$ be an isotropic regular form in $n \geq 3$ variables. Then $\psi\perp\<-1\>$ is isotropic and we have $\psi \sim (-\phi) \perp \< 1 \>$ for some regular form $\phi$ in $n-1$ variables. 
We will prove that when $\psi$ is isotropic, the sequence $\{\mathcal{S}^n(Q^{\psi})(F)\}$ stabilizes at $n=1$ for any finitely generated field extension $F/k$. The proof will consist of three steps: 
\begin{itemize}
\item[(i)] We will first look at the case when $\phi$ is isotropic. We prove that, in this case, $Q^{\psi}$ is $\mathbb{A}^1$-chain connected, and hence $\mathbb{A}^1$-connected. 
\item[(ii)] In the general situation (i.e. with $\psi$ is isotropic but without any assumption on $\phi$), we will obtain an isomorphism $\mathcal{S}(Q^{\psi})(F) \xrightarrow{\sim} \frac{F^*}{\< D(\phi) \>}$ for any finitely generated field extension $F/k$.
\item[(iii)] We will then prove that $\mathcal{S}(Q^{\psi})(F) \to \mathcal{S}^2(Q^{\psi})(F)$ is a bijection for any finitely generated field extension $F/k$. 
\end{itemize}

Since any isotropic quadratic form is universal, it follows that if $\phi$ is isotropic, then $D(\phi)=k^*$. The following lemma establishes Theorem \ref{main theorem for Q}(1) in the case where $\phi$ is isotropic.  

\begin{lemma}\label{lemma on phi isotropic case}
    Let $\psi$ be an isotropic form in $n\geq 3$ variables over a field $k$ of characteristic 0. Then there exists a regular form $\phi'$ such that $\psi\sim\<1,-1\>\perp (\phi')$. Let $\phi=\<1\>\perp(-\phi')$ and $X^{\phi}$ denote the affine quadric in $\mathbb A^n_k$ which is the zero locus of polynomial $x_1x_2-\phi(1,x_3,\dots,x_n)$, where $x_1,\dots,x_n$ are the coordinates variables of $\mathbb A^n_k$. Then
    \begin{enumerate}
        \item $Q^{\psi}$ is isomorphic to $X^{\phi}$.
        \item Let $\pi:X^{\phi}\rightarrow \mathbb A^1_k$ be the projection on $x_1$-axis. Then the fiber $\pi^{-1}(a)$ is $\mathbb A^1$-chain connected for any $a\in k^*$ and $\pi^{-1}(0)$ has a $k$-rational point if and only if $\phi$ is isotropic.
        \item If $\phi$ is isotropic, then for all finitely generated field extensions $F/k$, we have  
    \[
    \mathcal{S}(Q^{\psi})(F)=*.
    \]  
    In other words, $Q^{\psi}$ is $\mathbb{A}^1$-chain connected.  
    \end{enumerate}
\end{lemma}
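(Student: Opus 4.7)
For (1), the plan is to exploit the hyperbolic summand produced by the isotropy of $\psi$. By Witt decomposition $\psi \sim \langle 1, -1\rangle \perp \phi'$ for some regular form $\phi'$ in $n-2$ variables, so $Q^\psi$ is isomorphic to the hypersurface $\{y_1^2 - y_2^2 + \phi'(y_3, \dots, y_n) = 1\}$ in $\mathbb{A}^n_k$. The linear change of variables $(y_1, y_2) \mapsto ((x_1+x_2)/2, (x_1-x_2)/2)$ converts $y_1^2 - y_2^2$ into $x_1 x_2$, and the equation becomes $x_1 x_2 = 1 - \phi'(x_3, \dots, x_n) = \phi(1, x_3, \dots, x_n)$, which is exactly the defining equation of $X^\phi$.

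For (2), I would simply inspect the fibers of $\pi$ using the equation $x_1 x_2 = \phi(1, x_3, \dots, x_n)$. For $a \in k^*$, the equation determines $x_2 = \phi(1, x_3, \dots, x_n)/a$ uniquely in terms of $(x_3, \dots, x_n)$, so $\pi^{-1}(a) \cong \mathbb{A}^{n-2}_k$; in particular it is $\mathbb{A}^1$-chain connected. For $a = 0$, the fiber is $\mathbb{A}^1 \times \{\phi'(x_3, \dots, x_n) = 1\}$ with $x_2$ free, so it admits a $k$-rational point if and only if $\phi'$ represents $1$; since $\phi = \langle 1\rangle \perp (-\phi')$, this is in turn equivalent to the isotropy of $\phi$.

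For (3), assuming $\phi$ is isotropic, I would fix a base point $p_0 = (0, 0, x_3^0, \dots, x_n^0) \in X^\phi(k)$ coming from a solution of $\phi'(x_3^0, \dots, x_n^0) = 1$ provided by (2), and argue that every $F$-point $p = (a_1, \dots, a_n)$ of $X^\phi$ is chain-$\mathbb{A}^1$-equivalent over $F$ to $p_0$ via an explicit short chain of $\mathbb{A}^1$-homotopies. If $a_1 \neq 0$, the polynomial homotopy $t \mapsto (a_1,\, \phi(1, (1-t)a_3 + t x_3^0, \dots, (1-t)a_n + t x_n^0)/a_1,\, (1-t)a_3 + t x_3^0, \dots, (1-t)a_n + t x_n^0)$ lies in the fiber $\pi^{-1}(a_1) \cong \mathbb{A}^{n-2}_F$ and connects $p$ to $(a_1, 0, x_3^0, \dots, x_n^0)$; then the line $t \mapsto (t, 0, x_3^0, \dots, x_n^0)$, which sits inside $X^\phi$ precisely because $\phi(1, x_3^0, \dots, x_n^0) = 0$, connects that point to $p_0$. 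If $a_1 = 0$, I would first use $t \mapsto (0, (1-t)a_2, a_3, \dots, a_n)$ to kill $x_2$, then slide along $t \mapsto (t, 0, a_3, \dots, a_n)$ (which lies in $X^\phi$ because $\phi(1, a_3, \dots, a_n) = 0$) to enter the fiber $\pi^{-1}(1)$, and finally apply the previous case.

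The substantive input is the existence of the horizontal lines $\{(t, 0, x_3, \dots, x_n) : t \in \mathbb{A}^1\} \subset X^\phi$: these are the only $\mathbb{A}^1$'s in $X^\phi$ that project non-constantly to the $x_1$-axis, and they exist exactly when $\phi(1, x_3, \dots, x_n) = 0$ is solvable, i.e.\ precisely when $\phi$ is isotropic. Once these lines are at hand, the rest of the argument is polynomial interpolation inside the smooth affine fibers of $\pi$, so I do not anticipate any further obstacle.
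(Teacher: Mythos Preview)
Your proof is correct, and for parts (1) and (2) it is essentially identical to the paper's.

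For part (3) the overall strategy is also the same as the paper's: both arguments connect points in distinct fibers of $\pi$ by producing, for each $y=(y_3,\dots,y_n)$ with $\phi(1,y_3,\dots,y_n)=0$, a section of $\pi$ landing in $L_y$ at $t=0$, and then use the $\mathbb{A}^1$-chain connectedness of the nonzero fibers. The difference is only in the choice of section: you use the horizontal line $t\mapsto(t,0,y_3,\dots,y_n)$, whereas the paper constructs the section $t\mapsto\bigl(t,\,s_2(t),\,t+y_3,\dots,t+y_n\bigr)$ with $s_2(t)=\phi(1,t+y_3,\dots,t+y_n)/t$ and checks that $s_2$ is a polynomial. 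Your sections are simpler and make the chain of homotopies completely explicit, so the argument is marginally cleaner. One side remark to drop: your claim that the horizontal lines are ``the only $\mathbb{A}^1$'s in $X^{\phi}$ that project non-constantly to the $x_1$-axis'' is false (the paper's sections are counterexamples), but this plays no role in your proof.
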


\begin{proof}
For (1), recall that $Q^{\psi}$ is defined as the locus of zeroes of polynomial $\psi(x_1,\dots,x_n)-1$ in $\mathbb A^n_k$. Since $\psi\sim\<1,-1\>\perp \phi'$, $Q^{\psi}$ is isomorphic to the affine quadric defined by $x_1x_2-\phi'(x_3,\dots,x_n)-1$, which is same as $x_1x_2-\phi(1,x_3,\dots,x_n)$.

For (2), the morphism $\pi: X^{\phi} \to \mathbb{A}^1_k$ is induced by the following  homomorphism of coordinate rings:
        \[k[x_1]\rightarrow k[x_1,\dots,x_n]/\< x_1x_2-\phi(1,x_3,\dots,x_n)\>. \]
For any $a \in k^*$, the fiber $\pi^{-1}(a)$ is the affine variety  
\[
\Spec k[x_2, \dots, x_n] / \< a x_2-\phi(1,x_3,\dots,x_n) \> \simeq \mathbb{A}^{n-2}_k,
\]  
which is $\mathbb{A}^1$-chain connected.  

The fiber $\pi^{-1}(0)$ is the affine variety  
\[
\Spec  \frac{k[x_2, \dots, x_n]}{\< \phi(1,x_3,\dots,x_n) \>} \cong \Spec k[x_2] \times \Spec  \frac{k[x_3, \dots, x_n]}{\< \phi(1,x_3,\dots,x_n) \>}.
\]  
Let $Y$ denote the quadratic hypersurface in $\mathbb{A}^{n-2}_k$ defined by $\phi(1,x_3,\dots ,x_n)=0$, hence $\pi^{-1}(0)=\mathbb A^1_k\times Y$. Clearly $\phi$ is  isotropic if and only if $Y(k) \neq \varnothing$. 

For (3), since $\mathcal{S}(Q^{\psi})(F)=\mathcal{S}(Q^{\psi}_F)(F)$ and $\psi_F$ is isotropic for all $F/k$, we may assume that $F=k$. Moreover since $Q^{\psi}\cong X^{\phi}$, it suffices to show that $\mathcal{S}(X^{\phi})(k)=*$ when $\phi$ is isotropic. For any $k$-rational point $y$ of $Y$, let $L_y$ denote the affine line in $\pi^{-1}(0)$ given as the image of the map  
\[
\mathbb{A}^1_k \xrightarrow{\text{id} \times y} \mathbb{A}^1_k \times Y.
\]  

    \noindent \textbf{Claim: }For any $y\in Y(k)$, there exists a section $s:\mathbb A^1_k\rightarrow X^{\phi}$ of $\pi$ such that $s(0)\in L_{y}$.
    \begin{proof}[Proof of the Claim]
Since $y \in Y(k)$, we have $y=(y_3, \dots, y_n)$ for some $y_i \in k$, and  
\[\phi(1,y_3,\dots,y_n)=1+\sum_{i=3}^n b_i y_i^2=0.\]
Define $s: \mathbb{A}^1_k \to X^{\phi}$ by  
\[t \mapsto (s_1(t),\dots, s_n(t))=(t, s_2(t), t + y_3, \dots, t + y_n),\]
where  
\[s_2(t)=\frac{\phi(1,s_3(t),\dots, s_n(t))}{s_1(t)}=\frac{1 + \sum_{i=3}^n b_i (t + y_i)^2}{t}.\]  
Expanding $s_2(t)$, we get  
\[s_2(t)=\frac{1 + \sum_{i=3}^n b_i y_i^2 + t f(t)}{t},\]  
where $f(t) \in k[t]$. Thus, $s_2(t)=f(t)$ is a polynomial in $t$, and hence $s$ is well defined.  

Therefore, $s$ is a section of $\pi$, and we have  
\[s(0)=(0, f(0), y_3, y_4, \dots, y_n)\in L_y. \qedhere 
\]\end{proof}
Since $y$ was an arbitrary $k$-rational point of $Y$, and any fiber of $\pi$ over a non-zero $k$-rational point is $\mathbb A^1$-chain connected, the above claim implies that $\mathcal S(X^{\phi})(k)=*$, as required.
\end{proof}

\begin{notation}\label{definitions for isotropic setting}
We fix the following setting for the remainder of this subsection.
\begin{enumerate}
    \item Let $\psi := \< 1, -1 \> \perp \phi'$ and $\phi := \< 1 \> \perp (-\phi')$, 
    where $\phi'$ is a fixed regular quadratic form over a field $k$ of characteristic $0$ in $n-2$ variables, with $n \ge 3$.
    \item We fix the coordinate system $(x_1, \dots, x_n)$ on $\mathbb{A}^n_k$. 
    With respect to this choice, we identify $Q^{\psi}$ with the affine hypersurface $X^{\phi}$ defined in Lemma~\ref{lemma on phi isotropic case}; that is, $Q^{\psi} \subset \mathbb{A}^n_k$
    is the zero locus of the polynomial $x_1x_2-\phi(1, x_3, \dots, x_n)$. Let $\pi:Q^\psi\rightarrow \mathbb A^1_k$ denote the projection on $x_1$-coordinate after identifying $Q^{\psi}$ with $X^{\phi}$.
    \item Let $C$ be an irreducible curve, and $f: C \dashrightarrow Q^{\psi}$ a rational map. By abuse of notation, we use the same letter $f$ to denote its restriction to the generic point of $C$. The induced homomorphism on coordinate rings is written $f^* : k[Q^{\psi}] \longrightarrow k(C)$, and we set $f_i := f^*(x_i)$ for $1 \le i \le n$.
\end{enumerate}
\end{notation}
\begin{theorem}\label{iteration 1 for isotropic case}
Let $\psi,\phi$ and $Q^{\psi}$ be as in Notation \ref{definitions for isotropic setting}. Then for any finitely generated field extension $F/k$, we have an isomorphism  
    \[
    \mathcal{S}(Q^{\psi})(F) \cong \frac{F^*}{\< D(\phi_F) \>}.
    \]  
\end{theorem}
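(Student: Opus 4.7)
The plan is to reduce to the case $F = k$ using the base-change identity $\mathcal{S}(Q^{\psi})(F) = \mathcal{S}(Q^{\psi}_F)(F)$, and then split into cases according to the isotropy of $\phi$. If $\phi$ is isotropic, Lemma~\ref{lemma on phi isotropic case}(3) gives $\mathcal{S}(Q^{\psi})(k) = \ast$, and since isotropic forms are universal, $D(\phi) = k^*$; hence both sides are trivial. We henceforth assume $\phi$ is anisotropic, which by Lemma~\ref{lemma on isotropic forms over DVR} also makes $\phi_{k(t)}$ anisotropic.

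Since $\pi^{-1}(0)$ has no $k$-rational points in this case, every $p \in Q^{\psi}(k)$ satisfies $p_1 \neq 0$, and we may define $\Phi : Q^{\psi}(k) \to k^*/\langle D(\phi) \rangle$ by $\Phi(p) = [p_1]$. Surjectivity is immediate since $(d, 1/d, 0, \ldots, 0) \in Q^{\psi}(k)$ maps to $[d]$ for every $d \in k^*$. To see that $\Phi$ factors through $\mathcal{S}(Q^{\psi})(k)$, take an $\mathbb{A}^1$-homotopy $h : \mathbb{A}^1_k \to Q^{\psi}$ with $h(0) = p$, $h(1) = q$, and set $h_i = h^*(x_i) \in k[t]$. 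The defining equation $h_1 h_2 = \phi(1, h_3, \ldots, h_n)$ shows that at every closed point $P$ where $v_P(h_1)$ is odd, the vector $(1, \bar{h}_3(P), \ldots, \bar{h}_n(P))$ is a nonzero isotropic vector of $\phi_{\kappa(P)}$. The Quadratic Value Theorem (Proposition~\ref{QVT}) then yields $h_1 \in a\langle D(\phi_{k(t)}) \rangle$ for some $a \in k^*$; specialising at $t = 0$ and $t = 1$ via Lemma~\ref{lemma for specialisation}(2) (using $p_1, q_1 \neq 0$) gives $p_1/a, q_1/a \in \langle D(\phi) \rangle$, whence $p_1 / q_1 \in \langle D(\phi) \rangle$, and chain homotopies extend this.

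The main obstacle is injectivity. Since each fiber $\pi^{-1}(e)$ over $e \in k^*$ is isomorphic to $\mathbb{A}^{n-2}_k$, every $p \in Q^{\psi}(k)$ is $\mathbb{A}^1$-chain homotopic to $(p_1, 1/p_1, 0, \ldots, 0)$ within its fiber, so it suffices to show that for every $\alpha \in D(\phi)$ and every $d \in k^*$, the points $(d, 1/d, 0, \ldots, 0)$ and $(d\alpha, 1/(d\alpha), 0, \ldots, 0)$ are $\mathbb{A}^1$-chain homotopic. For $\alpha = \phi(1, c_3, \ldots, c_n)$ the explicit morphism
\[
h(t) = \bigl(d(1 + (\alpha - 1)t^2),\ 1/d,\ c_3 t,\ \ldots,\ c_n t\bigr)
\]
lands in $Q^{\psi}$ (one checks $h_1 h_2 = 1 - (1 - \alpha)t^2 = \phi(1, h_3, \ldots, h_n)$ using $\phi'(c_3, \ldots, c_n) = 1 - \alpha$), connecting $(d, 1/d, 0, \ldots, 0)$ to $(d\alpha, 1/d, c_3, \ldots, c_n)$, after which a move inside $\pi^{-1}(d\alpha) \cong \mathbb{A}^{n-2}_k$ reaches $(d\alpha, 1/(d\alpha), 0, \ldots, 0)$.

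The delicate step is handling the remaining generators of $\langle D(\phi) \rangle$, namely the values $\phi(c_1, c_3, \ldots, c_n)$ with $c_1 \neq 1$ --- in particular the squares $\alpha = c_1^2$ (when $c_3 = \cdots = c_n = 0$) and the values $\alpha = -\phi'(c_3, \ldots, c_n)$ (when $c_1 = 0$). The plan here is to produce higher-degree rational curves on $Q^{\psi}$ realising the corresponding moves, or equivalently to show that $\langle D(\phi) \rangle$ coincides with the subgroup of $k^*$ generated by $\{\phi(1, c_3, \ldots, c_n) : c_i \in k\}$ alone. This reduction --- matching the algebraic $D(\phi)$-generators with explicit homotopies --- is the most subtle part of the argument, and is where one expects the hyperbolic-plane summand $\langle 1, -1 \rangle$ inside $\psi$ to play an essential role.
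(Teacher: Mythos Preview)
Your outline matches the paper's proof closely: the reduction to $F=k$, the case split on isotropy of $\phi$, the definition of $\Phi$ via the $x_1$-coordinate, the well-definedness via QVT, surjectivity via fibre-connectedness, and the explicit homotopy
\[
h(t)=\bigl(d\,\phi(1,c_3t,\dots,c_nt),\,1/d,\,c_3t,\dots,c_nt\bigr)
\]
for $\alpha=\phi(1,c_3,\dots,c_n)$ are all exactly as in the paper. The proposal is, however, genuinely incomplete at the point you yourself flag: you have not handled the generators $\alpha=c_1^2$ (squares) and $\alpha=-\phi'(c_3,\dots,c_n)$ (first slot zero), and this is not a minor bookkeeping issue but the crux of the injectivity argument.

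The paper's resolution is short and worth knowing. For squares, it does \emph{not} look for higher-degree curves or invoke the hyperbolic summand of $\psi$; instead it runs two copies of your degree-two curve in parallel. Writing $\phi=\langle 1,b_3,\dots,b_n\rangle$, consider the two morphisms $f,g:\mathbb{A}^1_k\to Q^{\psi}$ with
\[
f_1(t)=c\,d_1^2(1+b_3t^2),\qquad g_1(t)=c(1+b_3t^2),
\]
(and $f_2=1/(cd_1^2)$, $g_2=1/c$, all other coordinates $=t$ in slot $3$ and $0$ elsewhere). Then $cd_1^2\in A(f)$ and $c\in A(g)$, and the images of $f_1,g_1$ meet because the equation $d_1^2(1+b_3a^2)=1+b_3b^2$ is a single quadratic constraint in $(a,b)$: factor it as $(d_1a-b)(d_1a+b)=(1-d_1^2)/b_3$ and impose any convenient linear splitting, e.g.\ $d_1a+b=1$, to solve. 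Crucially, anisotropy of $\phi$ forces $1+b_3a^2\neq 0$ for every $a\in k$, so the intersection point is genuinely in $k^*$. This gives $cd_1^2\sim c$, i.e.\ $d_1^2\in G:=\langle\{\phi(1,c_3,\dots,c_n)\}\rangle$, and hence your proposed identity $G=\langle D(\phi)\rangle$ is true. For the first-slot-zero case, the paper simply rewrites: if $y_3\neq 0$ then anisotropy gives $b_3\neq -1$, and the identity
\[
b_3y_3^2=\Bigl(\tfrac{2b_3y_3}{1+b_3}\Bigr)^2+b_3\Bigl(\tfrac{(1-b_3)y_3}{1+b_3}\Bigr)^2
\]
expresses $-\phi'(y_3,\dots,y_n)$ as $\phi(d_1,z_3,\dots,z_n)$ with $d_1\neq 0$, reducing to the case already handled.
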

\begin{proof}
As before, assume that $F=k$.  
If $\phi$ is isotropic, the result follows immediately from Lemma~\ref{lemma on phi isotropic case}.  
Hence, assume that $\phi$ is anisotropic.  

Let $\pi : Q^{\psi} \to \mathbb{A}^1_k$ denote the projection onto the $x_1$–coordinate as in Notation \ref{definitions for isotropic setting}, and let
$\pi_k : Q^{\psi}(k) \to \mathbb{A}^1_k(k)=k$ be the induced map on $k$–points.
Since $\phi$ is anisotropic, Lemma~\ref{lemma on phi isotropic case} implies that the image of $\pi_k$ is contained in $k^*$.
Write $\overline{\pi}_k$ for the composition
\[
 Q^{\psi}(k)\xrightarrow{\ \pi_k\ } k^*\longrightarrow \frac{k^*}{\<D(\phi)\>},
 \quad (x_1, \dots, x_n) \longmapsto x_1\<D(\phi)\>.
\]

\noindent\textbf{Claim 1:} $\overline{\pi}_k$ factors through the map $Q^{\psi}(k) \to \mathcal{S}(Q^{\psi})(k)$. 
\begin{proof}[Proof of Claim 1]
Let $f: \mathbb{A}^1_k \to Q^{\psi}$ be an $\mathbb A^1$-homotopy of $k$ in $Q^{\psi}$. Recalling from Notation~\ref{definitions for isotropic setting} that $f^*(x_i)=f_i$, we will now show that $f_1(t) \in c \< D(\phi_{k(t)}) \>$ for some $c \in k^*$. To do so, let $P$ be any closed point of $\mathbb{A}^1_k$ where $\phi_{k(P)}$ is anisotropic. We then have the following:
\[
\begin{aligned}
v_P(f_1 f_2) &= v_P\big(\phi(1, f_3, \dots, f_n)\big) \\
&= \min\{v_P(1), v_P(f_3^2(t)), \dots, v_P(f_n^2(t))\} \quad \text{(Lemma \ref{lemma for specialisation}(2))} \\
&= 0.
\end{aligned}
\]
Therefore, $v_P(f_1)=v_P(f_2)=0$. Specifically, $v_P(f_1)$ is even whenever $\phi_{k(P)}$ is anisotropic. In view of the QVT (Proposition \ref{QVT}), this implies that there exists  $c \in k^*$ such that 
\[
f_1(t) \in c \< D(\phi_{k(t)}) \>.
\]  
Since $\phi$ is anisotropic, we have $v_{t_1}(f_1)=0$ for any $t_1 \in k$. Hence, for any $t_1 \in K$, we have 
\[
f_1(t_1) \in c \< D(\phi) \>.
\]  
Thus, $\overline{\pi}_k$ factors through the map $Q^{\psi}(k) \to \mathcal{S}(Q^{\psi})(k)$ as required.
\end{proof}

We now have the following commutative diagram:
\[
\begin{tikzcd}
Q^{\psi}(k) \arrow[r, "{\overline{\pi}_k}"] \arrow[d] & \frac{k^*}{\< D(\phi) \>}  \\
 \mathcal{S}(Q^{\psi})(k) \arrow[ur, "\mathcal{F}_k", swap]
\end{tikzcd}
\]

The proof of the theorem will be completed once we show that $\mathcal{F}_k$ is bijective. Since the fiber of $\pi$ over any non-zero $k$-rational point is $\mathbb{A}^1$-chain connected (see Lemma \ref{lemma on phi isotropic case}), we conclude that $\mathcal{F}_k$ is surjective. Moreover, the injectivity of $\mathcal{F}_k$ reduces to the following claim:

\bigskip
\noindent\textbf{Claim 2: }For any $\lambda\in c\< D(\phi)\>$ and any $c\in k^*$ there exist $k$-rational points $P\in \pi^{-1}(c), Q\in \pi^{-1}(\lambda)$ such that $[P]_1=[Q]_1$.
\begin{proof}[Proof of Claim 2]
Given any $f:\mathbb A^1_k\rightarrow Q^{\psi}$, let $A(f)$ denote the following set
 \[A(f):=\{a\in k^*|\pi(f(t_1))=a \text{ for some } t_1\in k\}.\]

Define the following equivalence relation on $k^*$: $\lambda_1\sim \lambda_2$ if there exists a chain of a finite sequence of maps $f^i:\mathbb A^1_k\rightarrow X$ for $i=1,\dots,m$ such that $\lambda_1\in A(f^i)$ and $\lambda_2\in A(f^m)$ and $A(f^i)\cap A(f^{i+1})\neq\varnothing$ for $i<m$. The claim is further reduced to showing that $c\sim \lambda$ for any $c\in k^*$ and  any $\lambda\in c\< D(\phi)\>$. 

Let $c\in k^*$ and $\lambda\in c\< D(\phi)\>$ be fixed. We now construct the required sequence of rational curves.
Since $\lambda\in c\< D(\phi)\>$, there exists $\lambda_i\in D(\phi)$ such that 
\[\lambda=c\lambda_1\lambda_2\dots\lambda_j\]
 Any morphism $\mathbb A^1_k\xrightarrow f Q^{\psi}$ is given by $n$-polynomials $f_1(t), \dots, f_n(t) \in k[t]$
satisfying \[f_1(t)=\frac{\phi(1,f_3(t),\dots, f_n(t))}{f_2(t)},\]
and $\pi$ is the projection on $x_1$-coordinate.
\begin{case}[$j=1$]
     Hence, $\lambda=c\lambda_1$ where $\lambda_1=\phi(d,y_3,\dots,y_n)=d^2+\sum_{i=3}^{n}b_iy_i^2$ for some $d,y_i\in k$.
     First, we can assume that $d\neq 0$.  Indeed, suppose $d=0$, then
$y_i\neq 0$ for some $i$ say $y_3\neq 0$. Since $\phi$ is anisotropic, $b_3\neq -1$ and
   \[b_3y_3^2=\left (\frac{2b_3y_3}{1+b_3}\right )^2+b_3\left (\frac{(1-b_3)y_3}{1+b_3}\right )^2=d_1^2+b_3z_3^2.\]
Hence, 
\[\lambda_1=\phi(d_1,z_3,z_4\dots,z_n),\] where $d_1\neq 0$ and $z_i=y_i$ for $i\geq 4.$
We now show $\lambda\sim c$ in the following steps:\\

\item \textbf{STEP 1: $\lambda\sim cd_1^2$.} Let $f:\mathbb A^1_k\rightarrow Q^{\psi}$ be given by the following $n$-polynomials:
\[f_1(t):= cd_1^2\phi\left(1,\frac{tz_3}{d_1},\dots,\frac{tz_n}{d_1}\right),\quad f_2(t)=\frac{1}{cd_1^2}, \quad f_i(t)=\frac{tz_i}{d_1} \text{ for } i\geq 3.\] 
Since $f_1(0)=cd_1^2$ and $f_1(1)=c\lambda_1=\lambda$, we have $cd_1^2,\lambda\in A(f)$. Hence, $\lambda\sim cd_1^2$.
\item \textbf{STEP 2: $cd_1^2\sim c$.} Let $f:\mathbb A^1_k\rightarrow Q^{\psi}$ be given by the following $n$-polynomials:
\[f_1(t)=cd_1^2\phi(1,t,0,\dots,0), \quad f_2(t)=\frac{1}{cd_1^2},\quad f_3(t)=t,\quad f_n(t)=0 \text{ for }n>3.\]
 Let $g:\mathbb A^1_k\rightarrow Q^{\psi}$ be given by the following $n$-polynomials:
\[g_1(t)=c\phi(1,t,0,\dots,0), \quad g_2(t)=\frac{1}{c},\quad g_3(t)=t,\quad g_n(t)=0 \text{ for }n>3.\]
Then, $cd_1^2\in A(f_1)$ and $c\in A(f_2)$. Also, it is easy to find $a,b\in k$ such that 
\[f_1(a)=c(d_1^2+b_1d_1^2a^2)=c(1+b_1b^2)=g_1(b),\]
for instance by solving the 2 linear equations $d_1a-b=(1-d_1^2)/b_1$ and $d_1a+b=1$ for $a$ and $b$. Hence, $A(f)\cap A(g)\neq\varnothing$, which implies that $cd_1^2\sim c$.

Therefore, Step 1 and 2 together show that $c\sim c\lambda_1$.
\end{case}
\begin{case}[$j>1$]
     Suppose $\lambda=c\lambda_1\lambda_2\dots\lambda_j$.
Since $c\in k^*$ and $\lambda\in D(\phi)$ are arbitrary in Case $j=1$, we have
\[d\mu \sim d\text{ for any } d\in k^*,\, \mu\in D(\phi).\]
Since $\lambda_i\in D(\phi)$ for all $i$, applying the case $j=1$ repeatedly for $\mu:=\lambda_i$, we have
\[c\lambda_1\lambda_2\dots\lambda_j\sim c\lambda_1\lambda_2\dots\lambda_{j-1}\sim c\lambda_1\lambda_2\dots\lambda_{j-2}\sim\dots\sim c\lambda_1\sim c.\]
Therefore, $c\sim \lambda$ for any $\lambda\in c\< D(\phi)\>$ and we are done.\qedhere
\end{case}
\end{proof} 
\noindent Claim 2 implies that $\mathcal F_k$ is bijective, and hence the proof follows.
\end{proof}

The proof of Theorem~\ref{iteration 1 for isotropic case} provides an explicit isomorphism 
\(\mathcal{S}(Q^{\psi}_F)(F)\cong F^*/\<D(\phi_F)\>\) for any field extension $F/k$ such that $\phi_F$ is anisotropic.
For later reference, we record this construction as a definition.

\begin{definition}\label{def:FF}
Let $F/k$ be a finitely generated separable field extension such that $\phi_F$ is anisotropic.
We define
\[
\mathcal{F}_F : \mathcal{S}(Q^{\psi}_F)(F) \xrightarrow{\;\sim\;} \frac{F^*}{\< D(\phi_F) \>}
\]
to be the isomorphism constructed in the proof of Theorem~\ref{iteration 1 for isotropic case}, 
given explicitly by
\[
[(x_1, \dots, x_n)]_1 \longmapsto x_1 \, \< D(\phi_F) \>.
\]
\end{definition}

\begin{corollary}\label{good rational curves in X}
    Let $\phi$ be anisotropic and $f: \mathbb{A}^1_k \dashrightarrow Q^{\psi}$ be any rational curve in $Q^{\psi}$. Then the following are equivalent:
    \begin{enumerate}
        \item $f^*(x_1) \in c \< D(\phi_{k(t)}) \>$ for some $c \in k^*$.
        \item $[f]_1=[f(t_1)]_1$ in $\mathcal{S}(Q^{\psi})(k(t))$ for any $t_1 \in k$ where $f$ is defined. In other words, the map  
       $
        \Spec k(t) \xrightarrow{f} X
       $  
        is $\mathbb{A}^1$-homotopic to the constant map  
       $
        \Spec k(t) \xrightarrow{P} X,
       $ 
        where $P$ is any $k$-rational point in the image of $f$.
    \end{enumerate}
    Moreover, if $\phi$ is isotropic, (2) is satisfied by all rational curves in $X$.
\end{corollary}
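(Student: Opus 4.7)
The plan is to translate the condition in $\mathcal{S}(Q^{\psi})(k(t))$ into an explicit condition in $k(t)^*/\<D(\phi_{k(t)})\>$ via the isomorphism $\mathcal{F}_{k(t)}$ of Theorem~\ref{iteration 1 for isotropic case} and Definition~\ref{def:FF}, and then deduce each implication by a short group-theoretic argument, using Lemma~\ref{lemma for specialisation} for the specialization step.

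First I would note that since $\phi$ is anisotropic over $k$ and $k(t)/k$ is purely transcendental, $\phi_{k(t)}$ remains anisotropic (for instance, by Lemma~\ref{lemma for specialisation}(1) applied to $R = k[t]_{(t)}$). Hence Theorem~\ref{iteration 1 for isotropic case} and Definition~\ref{def:FF} apply with $F = k(t)$, yielding the explicit isomorphism
\[
\mathcal{F}_{k(t)} : \mathcal{S}(Q^{\psi})(k(t)) \xrightarrow{\sim} \frac{k(t)^*}{\<D(\phi_{k(t)})\>}, \qquad [x]_1 \mapsto x_1 \<D(\phi_{k(t)})\>.
\]
Viewing $f$ as a $k(t)$-point of $Q^{\psi}$ via its generic fiber and $P := f(t_1) = (p_1, \dots, p_n)$ as a constant $k(t)$-point of $Q^{\psi}$, condition (2) translates under $\mathcal{F}_{k(t)}$ to the relation $f_1 \cdot p_1^{-1} \in \<D(\phi_{k(t)})\>$, where $f_1 = f^*(x_1)$. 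I would also observe that $p_1 \in k^{*}$ is automatic: Lemma~\ref{lemma on phi isotropic case}(2) forbids $k$-rational points of $Q^{\psi}$ in the fiber $\pi^{-1}(0)$ whenever $\phi$ is anisotropic.

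The implication $(2) \Rightarrow (1)$ is then immediate with $c := p_1 \in k^{*}$. For $(1) \Rightarrow (2)$, the task reduces to showing $c \cdot p_1^{-1} \in \<D(\phi_{k(t)})\>$. Here I would invoke Lemma~\ref{lemma for specialisation}(2) applied to the DVR $k[t]_{(t - t_1)}$, whose residue field is $k$ and over which $\phi$ is anisotropic by hypothesis: since $f_1/c \in \<D(\phi_{k(t)})\>$ has valuation zero at $t_1$ (because $p_1 = f_1(t_1) \neq 0$), the lemma yields $p_1/c \in \<D(\phi_k)\> \subseteq \<D(\phi_{k(t)})\>$, which gives the desired relation.

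For the final assertion, if $\phi$ is isotropic then $Q^{\psi}$ is $\mathbb{A}^1$-chain connected by Lemma~\ref{lemma on phi isotropic case}(3), so $\mathcal{S}(Q^{\psi})(k(t)) = \ast$ and (2) is satisfied trivially by every rational curve in $Q^{\psi}$. I expect no real obstacle in this corollary; the only step requiring any care is the specialization, which is a direct application of Lemma~\ref{lemma for specialisation} already established in Section~2.
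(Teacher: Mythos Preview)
Your argument is correct and follows essentially the same approach as the paper: apply the isomorphism $\mathcal{F}_{k(t)}$ from Theorem~\ref{iteration 1 for isotropic case} to translate condition (2) into a coset equality in $k(t)^*/\<D(\phi_{k(t)})\>$, then use specialization (Lemma~\ref{lemma for specialisation}) for the direction $(1)\Rightarrow(2)$. The paper compresses these steps into a single chain of equivalences, leaving the specialization step implicit (it was already carried out in the proof of Claim~1 of Theorem~\ref{iteration 1 for isotropic case}), whereas you spell it out explicitly; but the logic is identical.
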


\begin{proof}
    Suppose $\phi$ is anisotropic, and let $t_1 \in k$ be such that $f$ is defined on $t_1$. Let $f^{*}(x_1)=f_1\in k(t)$. Then
    \[
    (1)\iff f_1 \< D(\phi_{k(t)}\>= 
   (f_1(t_1))\< D(\phi_{k(t)}\> \iff \mathcal F_{k(t)}(f)=\mathcal F_{k(t)}(f(t_1))\iff (2).
    \]  
  If $\phi$ is isotropic, then $\phi_{k(t)}$ is isotropic and (2) holds because of Lemma \ref{lemma on phi isotropic case} in this case.
\end{proof}


\begin{lemma}\label{Lemma of curves for isotropic case}
    Let $(R, m, \kappa)$ be a DVR with function field $K$ and valuation $v$. Let $h \in Q^{\psi}(K)$ with $h^*(x_i)=h_i$, and suppose $\phi_{\kappa}$ is anisotropic.  
    \begin{enumerate}
        \item If $h$ lifts to $Q^{\psi}(\Spec R)$, then $v(h_1)=v(h_2)=0$.  
        \item If $h$ does not lift to $Q^{\psi}(\Spec R)$, then $v(h_1)$ or $v(h_2)$ is negative. Moreover,  
        \[
        \min \{v(h_1), v(h_2)\}=\min \{v(h_1), v(h_2), \dots, v(h_n)\}.
        \]  
    \end{enumerate}  
\end{lemma}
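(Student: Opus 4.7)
\medskip

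My plan is to exploit the defining relation of $Q^\psi \cong X^\phi$: since $h \in Q^\psi(K)$, we have
\[
h_1 h_2 = \phi(1, h_3, \dots, h_n),
\]
and the anisotropy of $\phi_\kappa$ lets us compute the valuation of the right-hand side exactly via Lemma~\ref{lemma for specialisation}(1). Explicitly, since $\phi$ is a regular quadratic form in $n-1$ variables whose reduction modulo $m$ is anisotropic, that lemma gives
\[
v\bigl(\phi(1, h_3, \dots, h_n)\bigr) = \min\{\, 0,\, 2v(h_3),\, \dots,\, 2v(h_n)\,\}.
\]
This single identity controls everything.

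For part (1), the hypothesis that $h$ lifts to $Q^\psi(\Spec R)$ means $h_i \in R$ for all $i$, hence $v(h_i) \geq 0$ for $i \geq 3$. The displayed formula then forces $v(h_1 h_2) = 0$, and since both $v(h_1), v(h_2) \geq 0$ we conclude $v(h_1) = v(h_2) = 0$.

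For part (2), I would argue by contradiction for the first claim: suppose $v(h_1) \geq 0$ and $v(h_2) \geq 0$. Then $v(h_1 h_2) \geq 0$, so the formula above forces $v(h_j) \geq 0$ for all $j \geq 3$ as well, making $h$ lift to $R$, contrary to assumption. Hence at least one of $v(h_1), v(h_2)$ is negative. For the minimum statement, let $m_1 := \min\{v(h_1), v(h_2)\}$ and $m_2 := \min\{v(h_3), \dots, v(h_n)\}$; the trivial inequality $\min\{v(h_1), \dots, v(h_n)\} \leq m_1$ reduces the task to showing $m_1 \leq m_2$. I would split into two cases. If $m_2 \geq 0$, then the previous paragraph shows $m_1 < 0 \leq m_2$. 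If $m_2 < 0$, the formula gives $v(h_1) + v(h_2) = v(h_1 h_2) = 2 m_2$, so by a pigeonhole argument one of $v(h_1), v(h_2)$ is at most $m_2$, yielding $m_1 \leq m_2$. In either case $m_1 = \min\{v(h_1), \dots, v(h_n)\}$.

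The argument is essentially a direct computation once Lemma~\ref{lemma for specialisation}(1) is invoked, so there is no real obstacle; the only subtlety is remembering that the variable $x_1$ appears among the arguments of $\phi$ with coefficient $1$, so the constant term contributes $v(1^2) = 0$ to the minimum, which is precisely what prevents the two factors $h_1, h_2$ from absorbing negative valuations of the $h_j$ for $j \geq 3$ without themselves going negative.
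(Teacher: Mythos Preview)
Your proof is correct and follows essentially the same approach as the paper's: both use the relation $h_1h_2=\phi(1,h_3,\dots,h_n)$ together with Lemma~\ref{lemma for specialisation}(1) to compute $v(h_1h_2)=\min\{0,2v(h_3),\dots,2v(h_n)\}$, and then argue by the same case analysis (your split on the sign of $m_2$ is equivalent to the paper's split on the sign of $v(h_1h_2)$).
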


 \begin{proof}
Since $h\in Q^{\psi}(K)$ and $h_i=h^*(x_i)$, we have 
\[
h_1h_2=\phi(1, h_3, \dots, h_n).
\]  
Since $\phi_{\kappa}$ is anisotropic, Lemma \ref{lemma for specialisation}(1) gives  
\[
v(h_1h_2)=v(\phi(1, h_3, \dots, h_n))=\min\{v(1), v(h_3^2), \dots, v(h_n^2)\}.
\]  
Clearly, $h$ lifts to $Q^{\psi}(\Spec R)$ if and only if $v(h_i) \geq 0$ for all $i$, which, by the above relation, is equivalent to $v(h_1), v(h_2) \geq 0$.  

\begin{enumerate}
    \item Suppose $h$ lifts to $Q^{\psi}(\Spec R)$. Then $v(h_i) \geq 0$ for all $i$, so  
    \[
    v(h_1h_2)=\min\{v(1), v(h_3), \dots, v(h_n)\}=0.
    \]  
    Thus, $v(h_1)=v(h_2)=0$ as required.  

    \item Suppose $h$ does not lift to $Q^{\psi}(\Spec R)$. Then $v(h_1) < 0$ or $v(h_2) < 0$. We now show  
    \[
    \min \{v(h_1), v(h_2)\}=\min\{v(h_1), \dots, v(h_n)\}.
    \]  
    If $v(h_1h_2) \geq 0$, then $v(h_i) \geq 0$ for $i \geq 3$, and the equality follows.  
    If $v(h_1h_2) < 0$, then  
      \[
      \min\{v(1), v(h_3), \dots, v(h_n)\} < 0.
      \]  
      Hence,  
      \[
      v(h_1) + v(h_2)=v(h_1h_2)=2\min\{v(h_3), \dots, v(h_n)\}.
      \]  
      This implies  
      \[
      v(h_1) \leq \min\{v(h_3), \dots, v(h_n)\} \quad \text{or} \quad v(h_2) \leq \min\{v(h_3), \dots, v(h_n)\}.
      \]  
      Thus, the required equality holds.\qedhere
\end{enumerate}
\end{proof}

\begin{theorem}\label{iteration 2 for isotropic case}
Let $\psi$ be an isotropic form and $Q^{\psi}$ the corresponding quadric as in Notation~\ref{definitions for isotropic setting}.  
Then for any finitely generated field extension $F/k$, we have
\[
\mathcal{S}(Q^{\psi})(F)=\mathcal{S}^2(Q^{\psi})(F).
\]
\end{theorem}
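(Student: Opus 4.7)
The plan is to reduce to a single 1-ghost homotopy and then use the Quadratic Value Theorem, leveraging the explicit description $\mathcal{S}(Q^{\psi})(F) \cong F^*/\<D(\phi_F)\>$ from Theorem~\ref{iteration 1 for isotropic case}. First I would split on the isotropy of $\phi_F$: if $\phi_F$ is isotropic, Lemma~\ref{lemma on phi isotropic case}(3) gives $\mathcal{S}^n(Q^{\psi})(F) = \ast$ for every $n \geq 1$ and the equality is trivial; so assume $\phi_F$ anisotropic and identify $\mathcal{S}(Q^{\psi})(F)$ with $F^*/\<D(\phi_F)\>$ via $\mathcal{F}_F$ (Definition~\ref{def:FF}). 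Since the surjection $\mathcal{S}(Q^{\psi})(F) \to \mathcal{S}^2(Q^{\psi})(F)$ is automatic, only injectivity remains. Given $[x_0]_2 = [x_1]_2$, the elements $x_0, x_1$ are 1-ghost chain homotopic (i.e.\ connected by a chain of maps $\mathbb{A}^1_F \to \mathcal{S}(Q^{\psi})$, as invoked in Lemma~\ref{lemma on stabilisation}), so it suffices to handle a single 1-ghost homotopy $\mathcal{H} = ((U, V \xrightarrow{p} \mathbb{A}^1_F), h_U, h_V, y_0, y_1)$ with $[pr_U^* h_U]_1 = [pr_V^* h_V]_1$ in $\mathcal{S}(Q^{\psi})(W)$.

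The crux of the argument is to show that $g_U := \pi \circ h_U \in \mathcal{O}(U) \subset F(t)$ lies in $c \<D(\phi_{F(t)})\>$ for some $c \in F^*$, which I would establish via the Quadratic Value Theorem (Proposition~\ref{QVT}). I would verify at each closed point $P \in \mathbb{A}^1_F$ that $v_P(g_U)$ odd forces $\phi_{F(P)}$ to be isotropic. If $P \in U$, then $g_U$ is regular at $P$, so $v_P(g_U) > 0$ forces $g_U(P) = 0$, and the defining equation $x_1 x_2 = \phi(1, x_3, \dots, x_n)$ of $Q^{\psi} \cong X^{\phi}$ (Notation~\ref{definitions for isotropic setting}) produces an isotropic vector of $\phi_{F(P)}$. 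If $P \notin U$, the Nisnevich condition supplies a unique lift $P' \in V$ with $F(P') = F(P)$, and on (a component of) $k(V)$ the $W$-compatibility reads $p^* g_U = g_V \cdot m$ with $m \in \<D(\phi_{k(V)})\>$. Were $\phi_{F(P)}$ anisotropic, Lemma~\ref{lemma for specialisation}(1) would force $v_{P'}(m)$ to be even; together with $v_{P'}(g_V) \geq 0$ this would make $v_P(g_U) \equiv v_{P'}(g_V) \pmod 2$, so an odd $v_P(g_U)$ would yield $g_V(P') = 0$, forcing $\phi_{F(P)}$ isotropic, a contradiction.

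With this in hand, Corollary~\ref{good rational curves in X} together with the injection $F^*/\<D(\phi_F)\> \hookrightarrow F(t)^*/\<D(\phi_{F(t)})\>$ (from Lemma~\ref{lemma for specialisation}(2), using that $\pi(x) \neq 0$ for $x \in Q^{\psi}(F)$ by anisotropy) gives $\pi(h_U(y)) \equiv c \pmod{\<D(\phi_F)\>}$ for every $y \in U(F)$. For $y_i \in V(F)$, I would distinguish two subcases: if $p(y_i) \in U$, then $y_i \in W(F)$, and pulling back the $W$-compatibility at $y_i$ yields $[h_V(y_i)]_1 = [h_U(p(y_i))]_1$ in $\mathcal{S}(Q^{\psi})(F)$; otherwise, étaleness of $p$ at $y_i$ together with $F(y_i) = F$ supplies a Zariski neighborhood $V_{y_i} \subset V$ on which $p$ is an open immersion onto some $U_{y_i} \subset \mathbb{A}^1_F$, and on the non-empty overlap $U \cap U_{y_i}$ the $W$-compatibility forces $(\pi \circ h_V)|_{V_{y_i}} \equiv g_U \pmod{\<D(\phi_{F(t)})\>}$, placing this function in $c\<D(\phi_{F(t)})\>$, so evaluation at $y_i$ gives $\pi(h_V(y_i)) \equiv c \pmod{\<D(\phi_F)\>}$. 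Either way $\pi(x_0) \equiv \pi(x_1) \equiv c$, which is $[x_0]_1 = [x_1]_1$.

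The principal obstacle will be the QVT check at points $P \notin U$, which requires carefully transporting parity information from the étale lift $P' \in V$ back to $P$ on $\mathbb{A}^1_F$ via the $W$-compatibility and Lemma~\ref{lemma for specialisation}(1). The local open-immersion reduction for $F$-points of $V$ whose image avoids $U$ is a secondary but necessary technical step, essentially forced by étaleness plus trivial residue field at $y_i$.
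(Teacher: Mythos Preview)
Your overall approach mirrors the paper's proof: reduce to $\phi_F$ anisotropic, take a single 1-ghost homotopy, prove via QVT that $g_U := (h_U)_1 \in c\<D(\phi_{F(t)})\>$ for some $c \in F^*$, then conclude by specialization. The QVT verification is correct and matches the paper's argument (the paper reaches the contradiction at $P \notin U$ via Lemma~\ref{Lemma of curves for isotropic case}(1), while you derive $g_V(P') = 0$ and read off isotropy of $\phi_{F(P)}$ from the defining equation; both work).

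There is, however, a genuine gap in your treatment of $y_i \in V(F)$ with $p(y_i) \notin U$. The claim that \'etaleness of $p$ at $y_i$ together with $F(y_i) = F$ supplies a Zariski neighborhood $V_{y_i}$ on which $p$ is an open immersion is false. For instance, take $F = \#Q$, $U = \#A^1_{\#Q} \setminus \{0\}$, $V = \Spec \#Q[s,(s^2+1)^{-1}]$ with $p(s) = s^3 + s$: then $p$ is \'etale (its derivative $3s^2+1$ never vanishes over $\#Q$), $(U,V)$ is an elementary Nisnevich cover of $\#A^1_{\#Q}$, and $p^{-1}(0)\cap V = \{s=0\}$ with trivial residue extension; yet $[k(V):F(t)] = 3$, so $p$ is nowhere an open immersion on $V$. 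The fix is easy and is exactly what the paper does: let $V'$ be the irreducible component of $V$ containing $y_i$; since $\phi_F$ is anisotropic and $y_i \in V'(F)$, Lemma~\ref{lemma on isotropic forms over DVR} forces $\phi_{k(V')}$ anisotropic, so the $W$-compatibility at the generic point of $W'=U\times_{\#A^1_F}V'$ gives $(h_{V'})_1 \equiv p^*g_U \pmod{\<D(\phi_{k(V')})\>}$, hence $(h_{V'})_1 \in c\<D(\phi_{k(V')})\>$; now specialize at $y_i$ via Lemma~\ref{lemma for specialisation}(2). This uniform argument also makes your Subcase~1/Subcase~2 split unnecessary.
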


\begin{proof}
    Assume that $F=k$. If $\phi$ is isotropic, then $\mathcal{S}(Q^{\psi})(k)=\mathcal{S}^2(Q^{\psi})(k)=*$ by Lemma \ref{lemma on phi isotropic case}(3). Hence, assume that $\phi$ is anisotropic. It suffices to show that whenever $Q_1, Q_2 \in Q^{\psi}(k)$ are connected by a 1-ghost homotopy, they must be $\mathbb{A}^1$-chain homotopic.

Let $\mathcal{H}:\mathbb{A}^1_k\rightarrow \mathcal{S}(Q^{\psi})$ be a 1-ghost homotopy given by the following data:
\[
\left( (U,V\xrightarrow{p} \mathbb{A}^1_k), W, h_U, h_V \right)
\]
such that $[pr_U^*(h_U)]_1=[pr_V^*(h_V)]_1$ in $\mathcal S(Q^{\psi})(W)$. We need to show that $ [h(P)]_1=[h(P')]_1 $ for any $ P, P' \in (U\coprod V)(k) $, where $ h=h_U \coprod h_V $. 

\bigskip
\noindent\textbf{Claim:} $ (h_U)_1 \in c\< D(\phi_{k(t)})\> $ for some $ c\in k^* $.

\begin{proof}[Proof of Claim]
Suppose the claim is false. Then, by QVT (Proposition \ref{QVT}), there exists a closed point $ P $ of $\mathbb{A}^1_k$ such that $ v_P((h_U)_1) $ is odd and $ \phi_{k(P)} $ is anisotropic. Therefore, $ v_P((h_U)_1) $ is nonzero. By Lemma \ref{Lemma of curves for isotropic case}, $ h_U $ is not defined at $ P $, implying $ P \notin U $.

Since $ U\coprod V $ is an elementary Nisnevich cover of $\mathbb{A}^1_k$, there exists $ P' \in V $ such that $ p^{-1}(P)=\{P'\} $ and $ k(P')=k(P) $. Let $ V' $ be an irreducible subscheme of $ V $ containing $ P' $, and let $ q := p|_{V'} $. Also, set $ W' := U \times_{\mathbb{A}^1_k} V' $. Since $P' \in V'$ and $\phi_{k(P')}$ is anisotropic, Lemma~\ref{lemma on isotropic forms over DVR} implies that $\phi_{k(W')}$, which is the same as $\phi_{k(V')}$, is also anisotropic. 

Since $[pr_U^*(h_U)]=[pr_V^*(h_V)]$ in $\mathcal S(Q^{\psi})(W)$, we have
\[
[(pr_U^*h_U)|_{W'}]=[h_{V'}|_{W'}] \quad \text{in } \mathcal{S}(Q^{\psi})(k(W')).
\]

Applying Theorem~\ref{iteration 1 for isotropic case} to the generic point of $W'$ (see also Definition~\ref{def:FF}), we obtain
\[
\mathcal{F}_{k(W')} \big( (pr_U^*h_U)|_{W'} \big)
= \mathcal{F}_{k(W')} \big( h_{V'}|_{W'} \big).
\]

 Since $(pr_U^*h_U)^*(x_1)=q^*(h_U^*(x_1))$, the above equality implies
 \[
q^*((h_U)_1)\< D(\phi_{k(W')}) \>=(h_{V'})_1\< D(\phi_{k(W')}) \>
\]
which is same as 
\[
q^*((h_U)_1)(h_{V'})_1
\in \< D(\phi_{k(W')}) \>.
\]

Since $ \phi_{k(P')} $ is anisotropic, Lemma \ref{lemma for specialisation} implies $ v_{P'}\big(q^*((h_U)_1)(h_{V'})_1\big)) $ is even. Because $ q $ is an unramified morphism, we have  
\[
v_{P'}(q^*((h_U)_1))=v_P((h_U)_1),
\]
which is odd.  Since $v_{P'}\big(q^*((h_U)_1)(h_{V'})_1\big)) $ is even and we showed that $v_{P'}(q^*((h_U)_1))$ is odd, hence $v_{P'}((h_{V'})_1) $ must be odd. By Lemma \ref{Lemma of curves for isotropic case}(1), $ h_{V'} $ cannot be defined at $ P' $, which is a contradiction. Therefore, the claim is proved.
\end{proof}

Thus,  $(h_U)_1 \in c \< D(\phi_{k(t)}) \> $ for some $ c \in k^* $, which further implies $ (h_U)_1(P) \in c\< D(\phi_k) \> $ for any $ P\in U(k) $. By Theorem \ref{iteration 1 for isotropic case}, we conclude that  
\[
[h(P)]_1=[h(P')]_1 \quad \text{for any } P,P'\in U(k).
\]

It remains to show that for any $ P\in V $, we have $ [h_V(P)]_1=[h_U(P')]_1 $ for some $ P'\in U(k) $. Let $ V' $ be any irreducible component of $ V $, and set $ W' := V \times_{\mathbb{A}^1_K} V_1 $. Define $ q=p|_{V'} $ and let $ h_{V'} := h_V|_{V'} $.

First, assume that $ \phi_{k(W')} $ is isotropic. Since $ k(W')=k(V') $, it follows that $ \phi_{k(p)} $ is isotropic for any closed point $ p \in V' $ by Lemma \ref{lemma on isotropic forms over DVR}. Given that $ \phi $ is anisotropic, this implies that $ V'(k)=\varnothing $, and we are done.

Applying Theorem \ref{iteration 1 for isotropic case} to the generic point of $ W' $, we obtain  
\[
q^*((h_U)_1)\< D(\phi_{k(W')}) \>=(h_{V'})_1\< D(\phi_{k(W')}) \>.
\]
Since $ (h_U)_1 \in c \< D(\phi_{k(t)}) \> $, we have $ (h_{V'})_1 \in c \< D(\phi_{k(W)}) \> $, which further implies  
\[
(h_{V'})_1(P) \in c\< D(\phi) \> \quad \text{for any } P\in V'(k).
\]
Since $ V' $ was an arbitrary irreducible component of $ V $, Theorem \ref{iteration 1 for isotropic case} then ensures that  
\[
[h(P)]_1=[h(P')]_1 \quad \text{for any } P,P'\in (U\coprod V)(k).
\]
This completes the proof.
\end{proof}

\begin{proof}[Proof of Theorem \ref{main theorem for Q} (1)]
By Theorem \ref{iteration 2 for isotropic case} and Lemma \ref{lemma on stabilisation}, we have
\[\mathcal L(Q^{\psi})(F)=\mathcal S(Q^{\psi})(F) \quad \forall F/k.\] 
Hence, by Theorem \ref{iteration 1 for isotropic case} and \cite[Theorem 1.1]{BRS}, for any $F/k$, we have
\[\pi_0^{\mathbb A^1}(Q^{\psi})(F)=\mathcal S(Q^{\psi})(F)=\frac{F^*}{\< D(\phi_F)\>}.\qedhere\]
\end{proof}

\subsection{The anisotropic case}
\label{section anisotropic case}
Let $\psi$ be an anisotropic quadratic form over a field $k$ of characteristic $0$ in $n \ge 3$ variables. Assume that $Q^{\psi}(k) \neq \emptyset$, and hence $\bar{Q}^{\psi}(k) \neq \emptyset$. 
This implies that $\psi \perp \< -1 \>$ is isotropic, which in turn forces $1 \in D(\psi)$. 
Therefore, as in the isotropic case, there exists a quadratic form $\phi$ in $n-1$ variables such that 
\[
\psi \sim \< 1 \> \perp (-\phi).
\]

\begin{notation}\label{definition for anisotropic setting}
   Let $\psi := \< 1 \> \perp (-\phi)$ be an anisotropic form over a field $k$ of characteristic $0$ in $n \ge 3$ variables. 
   We fix the embedding $Q^{\psi} \hookrightarrow \mathbb{A}^n_k$ given as the zero locus of the polynomial $\psi(x_1,\dots,x_n)-1$ which is same as $x_1^2-\phi(x_2, \dots, x_n)-1$, where $x_1, \dots, x_n$ are the coordinate functions on $\mathbb{A}^n_k$.
\end{notation}

\begin{remark}\label{SY remark}
    Let $F$ be a finitely generated field extension of $k$. The following are two straightforward consequences if we assume $\psi_F$ to be anisotropic.
    \begin{enumerate}
        \item Any morphism $f: \mathbb{A}^1_F \to Q^{\psi}$ must be constant. Therefore, $\mathcal{S}(Q^{\psi})(F)=Q^{\psi}(F)$.  
        \item The complement $\bar{Q}^{\psi} \setminus Q^{\psi}$ has no $F$-rational points. Consequently, any rational map $f: \mathbb{A}^1_F \dashrightarrow Q^{\psi}$ extends to a map defined at all $F$-points of $\mathbb{A}^1_F$. 
    \end{enumerate}
\end{remark}

We now prove Theorem~\ref{main theorem for Q}(2), namely that for any finitely generated separable field extension $F/k$,
\[
\mathcal{S}^2(Q^{\psi})(F)=\mathcal{S}^3(Q^{\psi})(F)=\mathcal{L}(Q^{\psi})(F).
\] The case of fields $F$ when $\psi_F$ is isotropic has already been treated in Subsection~\ref{section isotropic case}. It remains to handle the case where $\psi_F$ is anisotropic. From Remark~\ref{SY remark}, $\mathcal S(Q^{\psi})(F)$ is known for all such $F/k$.
Theorem~\ref{main theorem for Q}(2) will be proved in two steps:
\begin{itemize}
    \item[(i)] We will give a geometric description of the set $\mathcal{S}^2(Q^{\psi})(F)$ for $F$ when $\psi_F$ is anisotropic.
    \item[(ii)] Then, we will show that $\mathcal{S}^2(Q^{\psi})(F)=\mathcal{S}^3(Q^{\psi})(F)$ for any field extension $F/k$, ensuring that stabilization of $\{ \mathcal{S}^n(Q^{\psi})(F)\}$ occurs at $n=2$.
\end{itemize}

The following lemma is analogous to Lemma \ref{Lemma of curves for isotropic case}.

\begin{lemma}\label{curve lemma for anisotropic case}
    Let $(R, m, \kappa)$ be a DVR with function field $K$ and valuation $v$. Let $f \in Q^{\psi}(K)$ with $f^*(x_i)=f_i$, and suppose $\phi_{\kappa}$ is anisotropic.  
    \begin{enumerate}
        \item If $f$ lifts to $Q^{\psi}(\Spec R)$, then both $v(f_1-1)$ and $v(f_2-1)$ are even.  
        \item If $f$ does not lift to $Q^{\psi}(\Spec R)$, then $v(f_1)<0$ and
        \[
       v(f_1)=\min \{v(f_2), \dots, v(f_n)\}.
        \]  
    \end{enumerate}  
\end{lemma}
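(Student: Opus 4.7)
The plan is to reduce both statements to the factorization
\[
(f_1 - 1)(f_1 + 1) = f_1^2 - 1 = \phi(f_2, \dots, f_n),
\]
which comes directly from the defining equation of $Q^{\psi}$. The main input will be Lemma~\ref{lemma for specialisation}(1) applied to the anisotropic form $\phi_\kappa$, which says
\[
v\bigl(\phi(f_2, \dots, f_n)\bigr) = \min\{\,v(f_i^2) : 2 \leq i \leq n\,\},
\]
an even integer that controls both sides of the factored identity.

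For (2), I would argue by contrapositive. Suppose $v(\phi(f_2, \dots, f_n)) \geq 0$. Then by the formula above every $v(f_i) \geq 0$ for $i \geq 2$, and the relation $f_1^2 = 1 + \phi(f_2, \dots, f_n)$ forces $v(f_1^2) \geq 0$, hence $v(f_1) \geq 0$. Thus $f$ would lift to $Q^{\psi}(\Spec R)$, contradicting the hypothesis. Therefore $v(\phi(f_2, \dots, f_n)) < 0$, which gives $v(f_1^2) = v(\phi(f_2, \dots, f_n))$; dividing by $2$ and comparing with the minimum formula yields $v(f_1) = \min_{i \geq 2} v(f_i) < 0$, as required.

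For (1), assume $f$ lifts, so $v(f_i) \geq 0$ for all $i$. Taking valuations on the factored identity gives
\[
v(f_1 - 1) + v(f_1 + 1) = v\bigl(\phi(f_2, \dots, f_n)\bigr),
\]
which is even. Because $\mathrm{char}\, k = 0$ and so $2 \in R^*$, the two elements $f_1 - 1$ and $f_1 + 1$ cannot both lie in $\mathfrak{m}$. Hence at least one of $v(f_1 - 1)$ and $v(f_1 + 1)$ is zero, which forces the other to equal the even valuation $v(\phi(f_2, \dots, f_n))$. This yields the required parity statement for the two factors; the case analysis amounts to checking whether $\bar{f}_1 \in \kappa$ equals $+1$, $-1$, or neither.

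I do not anticipate any substantial obstacle: both parts are essentially one line beyond Lemma~\ref{lemma for specialisation}(1), and the only bookkeeping is the residue-field case split in (1) (and, in (2), the observation that the non-lifting hypothesis propagates from some $f_i$ with $i \geq 2$ to $f_1$ via the squared identity).
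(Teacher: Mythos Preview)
Your proposal is correct and matches the paper's proof almost line for line: both start from the identity $f_1^2-1=\phi(f_2,\dots,f_n)$, invoke Lemma~\ref{lemma for specialisation}(1) to obtain $v(f_1^2-1)=\min_i v(f_i^2)$ (even), and then use for (1) that $v(f_1-1)$ and $v(f_1+1)$ cannot both be positive, and for (2) the contrapositive that nonnegative valuation of the right-hand side forces all $v(f_i)\geq 0$. (Note that the ``$v(f_2-1)$'' in the statement is a typo for $v(f_1+1)$, as both you and the paper's proof interpret it.)
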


\begin{proof}
We have $f_1^2-1=\phi(f_2, f_3, \dots, f_n)$. Since $\phi_{\kappa}$ is anisotropic, we can deduce
\[
v(f_1^2-1)=v(\phi(f_2, f_3, \dots, f_n))=\min\{v(f_2^2), \dots, v(f_n^2)\}.
\]
We will use the above relation to prove the lemma.
\begin{enumerate}
    \item From the above relation, we have that $v(f_1^2-1)$ is even. Suppose $f$ lifts to $Q^{\psi}(\Spec R)$, then $v(f_1-1) \geq 0$ and $v(f_1 + 1) \geq 0$. Both cannot be positive simultaneously; hence $v(f_1-1)=v(f_1^2-1)$ or $0$, both of which are even. The same is true for $v(f_1 + 1)$, and hence, we are done.
    \item Suppose $v(f_1) \geq 0$. Then we have $v(f_1^2-1) \geq 0$. The above relation implies that $v(f_i) \geq 0$ for all $i$, hence $f$ must $f$ lifts to $Q^{\psi}(\Spec R)$. Therefore, we have that if $f$ does not lift to $Q^{\psi}(\Spec R)$, then $v(f_1) < 0$. If $v(f_1) < 0$, then from the above relation, we get
    \[
    v(f_1^2)=v(f_1^2-1)=\min\{v(f_2^2), \dots, v(f_n^2)\},
    \]
    hence we arrive at our result.\qedhere
\end{enumerate}
\end{proof}

We now introduce the notion of \emph{good rational curves} in $Q^{\psi}$, which will play a key role in the computation of the sets $\mathcal{S}^2(Q^{\psi})(F)$. For any rational curve $f: \mathbb{A}^1_k \dashrightarrow Q^{\psi}$ and $f^*(x_i)=f_i$, we have $(f_1-1)(f_1 + 1)\big)=\phi(f_2, \dots, f_n)$.

\begin{definition}
A rational map $f: \mathbb{A}^1_k \dashrightarrow Q^{\psi}$ is called a \textit{good rational curve} in $Q^{\psi}$ if $f$ is non-constant and there exists $c \in k^*$ such that $f^*(x_1)-1 \in c \< D(\phi_{k(t)}) \>,$ or equivalently, $f^*(x_1) + 1 \in c \< D(\phi_{k(t)}) \>.$
\end{definition}

\begin{construction}\label{construction of isomorphism between Z and YF}
Let $\psi$ and $Q^{\psi}$ be as in Notation~\ref{definition for anisotropic setting}. 
Let $F/k$ be a field such that $\psi_F$ is isotropic. 
Then $\psi_F=\< 1 \> \perp (-\phi_F)$ with $1 \in D(\phi_F)$. 
Hence, there exist regular quadratic forms $\theta^F$ and $\theta'^F$ over $F$ such that 
\[
\phi_F \sim \theta^F, \qquad \text{and} \qquad \theta^F=\< 1 \> \perp \theta'^F.
\]
Similar to Notation \ref{definitions for isotropic setting}, let $X^{\theta^F} \subset \mathbb{A}^n_F$ denote the affine quadric defined by the equation $
z_1 z_2-\theta^F(1, z_3, \dots, z_n)=0$, where $z_i$ are coordinate variables of $\mathbb A^n_F$. Then $Q^{\psi}_F$ is isomorphic to $X^{\theta^F}$.

We now construct an explicit isomorphism 
\[
\mathcal{J}^F: Q^{\psi}_F \xrightarrow{\;\sim\;} X^{\theta^F}.
\]
On coordinate rings, the map $\mathcal{J}^F$ corresponds to the composition of the following two ring isomorphisms:

\[
\frac{F[z_1, \dots, z_n]}{\< z_1 z_2-\theta^F(1, z_3, \dots, z_n) \>}
\;\xrightarrow{\;\sim\;}\;
\frac{F[w_1, \dots, w_n]}{\< w_1^2-w_2^2-\theta^F(1, w_3, \dots, w_n) \>},
\]
given by
\[
z_1 \mapsto w_1-w_2, \qquad 
z_2 \mapsto w_1 + w_2, \qquad 
z_i \mapsto w_i \ \text{for } i \ge 3;
\]
and
\[
\frac{F[w_1, \dots, w_n]}{\< w_1^2-1-\theta^F(w_2, \dots, w_n) \>}
\;\xrightarrow{\;\sim\;}\;
\frac{F[x_1, \dots, x_n]}{\< x_1^2-1-\phi_F(x_2, \dots, x_n) \>},
\]
defined by
\[
w_1 \mapsto x_1, \qquad 
w_i \mapsto \sum_{j=2}^n a_{ij} x_j \quad \text{for } i \ge 2,
\]
for suitable $a_{ij} \in F$.
\noindent
(Note that $w_1^2-w_2^2-\theta^F(1, w_3, \dots, w_n)
= w_1^2-1-\theta^F(w_2, \dots, w_n)$, 
and the coefficients $a_{ij}$ exist because $\theta^F$ and $\phi_F$ are isometric over $F$.)
\end{construction}

\begin{remark}
For fields $F$ with $\psi_F$ isotropic, the quadric $X^{\theta^F}$ and fits into the framework of 
Notation~\ref{definitions for isotropic setting} in the isotropic case. Here, the roles of $k$, $\psi$, and $\phi$ in 
Notation~\ref{definition of quadrics} are played by $F$, $\< 1 \> \perp (-\theta^F)$, and $\theta^F$, respectively. Hence, all the results of Subsection~\ref{section isotropic case} apply to $X^{\theta^F}$, and therefore also to $Q^{\psi}_F$ via the isomorphism constructed above.
\end{remark}

    \begin{lemma}\label{lemma relating isotropic and anisotropic case}
    Let $\psi$ be an anisotropic form in $n\geq 3$ variables over a field of characteristic 0 as in Notation \ref{definition for anisotropic setting}. Let $f: \mathbb{A}_k^1 \dashrightarrow Q^{\psi}$ be a rational curve in $Q^{\psi}$, and let $F/k$ be a field extension such that $\psi_F$ is isotropic. Let $\theta^F$, $X^{\theta^F}$, and $\mathcal J^F$ be as defined in Construction \ref{construction of isomorphism between Z and YF}. Let $g: \mathbb{A}_F^1 \dashrightarrow X^{\theta^F}$ be a rational map defined as the composition
    \[
    \mathbb{A}_F^1 \overset{f_F}{\dashrightarrow} Q^{\psi}_F \xrightarrow{\mathcal J^F} X^{\theta^F}.
    \]
    \begin{enumerate}
        \item If $P \in \mathbb{A}_F^1$ such that $\phi_{F(P)}$ is anisotropic and $f_F$ is not defined at $P$, then $v_P(f_F^*(x_1-1))$ is odd if and only if $v_P(g^*(z_1))$ is odd.
        \item If $f$ is a good rational curve and $P$ is an $F$-rational point in the image of $f_F$, then the map $f_F: \Spec F(t) \to Q^{\psi}_F$ is chain $\mathbb{A}^1$-homotopic to the constant map $\Spec F(t) \xrightarrow{P} Q^{\psi}_F$. 
    \end{enumerate}
\end{lemma}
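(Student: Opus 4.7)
My plan for (1) is to make the pullback $(\mathcal J^F)^{\ast}$ explicit on coordinates and then reduce the parity question to a short valuation calculation. Composing the two ring isomorphisms in Construction~\ref{construction of isomorphism between Z and YF} shows that $(\mathcal J^F)^{\ast}(z_1) = x_1 - \sum_{j\ge 2} a_{2j} x_j$ and $(\mathcal J^F)^{\ast}(z_2) = x_1 + \sum_{j\ge 2} a_{2j} x_j$. Writing $\alpha := \sum_{j\ge 2} a_{2j} f_j$, we therefore have $g^{\ast}(z_1) = f_1 - \alpha$ and $g^{\ast}(z_2) = f_1 + \alpha$. Since $f_F$ is not defined at $P$ and $\phi_{F(P)}$ is anisotropic, Lemma~\ref{curve lemma for anisotropic case}(2) applied to $f_F$ gives $v_P(f_1) < 0$ and $v_P(f_1) = \min_{j\ge 2} v_P(f_j)$; the first yields $v_P(f_1 - 1) = v_P(f_1)$, while the second yields $v_P(\alpha) \ge v_P(f_1)$. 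Thus (1) reduces to showing that $v_P(g^{\ast}(z_1))$ and $v_P(f_1)$ have the same parity.

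For this last step I will use that $v_P(g^{\ast}(z_1)) + v_P(g^{\ast}(z_2))$ is even, which follows from Lemma~\ref{lemma for specialisation}(1) applied to the identity $g^{\ast}(z_1)\cdot g^{\ast}(z_2) = \theta^F(1, g^{\ast}(z_3), \dots, g^{\ast}(z_n))$ together with the fact that $\theta^F_{F(P)}$, being isometric to $\phi_{F(P)}$, is anisotropic. If $v_P(g^{\ast}(z_1)) \ne v_P(g^{\ast}(z_2))$, the two valuations share a common parity, and the identity $g^{\ast}(z_1) + g^{\ast}(z_2) = 2f_1$ shows that $v_P(f_1)$ equals their minimum and hence also has this parity. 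If $v_P(g^{\ast}(z_1)) = v_P(g^{\ast}(z_2))$, the identity $g^{\ast}(z_2) - g^{\ast}(z_1) = 2\alpha$ combined with the bound $v_P(\alpha) \ge v_P(f_1)$ rules out leading-term cancellation in $g^{\ast}(z_1) + g^{\ast}(z_2) = 2f_1$, so $v_P(f_1)$ again equals the common value.

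For (2), my plan is to verify the hypothesis of Corollary~\ref{good rational curves in X}(1) for the curve $g$ on $X^{\theta^F}$ and then pull the resulting chain $\mathbb A^1$-homotopy back through the isomorphism $\mathcal J^F$. Since $\phi_F \sim \theta^F$ forces $\< D(\phi_{F(t)}) \> = \< D(\theta^F_{F(t)}) \>$, the goodness hypothesis $f^{\ast}(x_1) - 1 \in c\< D(\phi_{k(t)}) \>$ immediately gives $f_1 - 1 \in c\< D(\theta^F_{F(t)}) \>$. To upgrade this to $g^{\ast}(z_1) \in c'\< D(\theta^F_{F(t)}) \>$, I will invoke the QVT (Proposition~\ref{QVT}) and check that at every closed point $P$ of $\mathbb A^1_F$ with $v_P(g^{\ast}(z_1))$ odd, the form $\theta^F_{F(P)}$ is isotropic. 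If $g$ is defined at $P$ then Lemma~\ref{Lemma of curves for isotropic case}(1) forces $v_P(g^{\ast}(z_1)) = 0$, contradicting oddness; otherwise, if $\theta^F_{F(P)}$ were anisotropic, part~(1) just proved would transfer the oddness to $v_P(f_1 - 1)$, and the QVT applied to $f$ would then force $\phi_{F(P)} \sim \theta^F_{F(P)}$ to be isotropic, a contradiction. Corollary~\ref{good rational curves in X} thus yields $[g]_1 = [g(t_1)]_1$ in $\mathcal S(X^{\theta^F})(F(t))$ for any $t_1$ where $g$ is defined; choosing $t_1$ with $f_F(t_1) = P$ and transporting along $\mathcal J^F$ gives the stated conclusion.

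The parity case analysis at the end of (1) is where I expect most of the work to lie. The map $\mathcal J^F$ mixes $x_1$ with a linear combination of $x_2, \dots, x_n$, so the valuation estimates on $f_1$ and $\alpha$ coming from Lemma~\ref{curve lemma for anisotropic case}(2) do not by themselves control the parity of $v_P(g^{\ast}(z_1))$; the anisotropy of $\theta^F_{F(P)}$, entering through Lemma~\ref{lemma for specialisation}(1), is what closes the argument. Everything else is a routine combination of the QVT with the isotropic-case results of Subsection~\ref{section isotropic case}.
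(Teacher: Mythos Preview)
Your proof is correct and follows essentially the same approach as the paper's: both compute the coordinate change $(\mathcal J^F)^*$ explicitly, invoke Lemma~\ref{curve lemma for anisotropic case}(2) to get $v_P(f_1-1)=v_P(f_1)\le v_P(\alpha)$, use the anisotropy of $\theta^F_{F(P)}\sim\phi_{F(P)}$ to make $v_P(g^*(z_1)g^*(z_2))$ even, and then run a short parity argument. Your case split on whether $v_P(g^*(z_1))=v_P(g^*(z_2))$ is a clean repackaging of the paper's split on whether $v_P(f_1)<v_P(\alpha)$ or $v_P(f_1)=v_P(\alpha)$; both rely on the identities $g^*(z_1)+g^*(z_2)=2f_1$ and $g^*(z_2)-g^*(z_1)=2\alpha$. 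For~(2), your contrapositive formulation of the QVT check is equivalent to the paper's direct one; just note that when you invoke Lemma~\ref{Lemma of curves for isotropic case}(1) in the case ``$g$ defined at $P$'', you are implicitly already assuming $\theta^F_{F(P)}$ anisotropic (which is required for that lemma), so the contradiction hypothesis should precede the case split rather than appear only in the second case.
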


\begin{proof}

Suppose $f^*(y_i)=f_i\in k(t)$ and $g^*(z_i)=g_i\in F(t)$. Using the description of $\mathcal J^F$ from Construction \ref{construction of isomorphism between Z and YF}, we have
\[g_1=f_1-\sum_{j=2}^n a_{2,j}f_j,\, g_2=f_1+\sum_{j=2}^n a_{2,j}f_j \text{ and } g_i=\sum_{j=2}^n a_{i,j}f_j \text{ for }i>2.\]
Hence, $v_P(g_1)\leq \min\left\{v_P(f_1),v_P\left(\sum_{j=2}^n a_{2,j}f_j\right)\right\}.$

For (1), let $P \in \mathbb{A}_F^1$ such that $\phi_{F(P)}$ is anisotropic and $f_F$ is not defined at $P$. Then by Lemma \ref{curve lemma for anisotropic case}, we have $v_P(f_1)<0$ and 
\[v_P(f_1-1)=v_P(f_1)=\min\{v_P(f_2),\dots, v_P(f_n)\}.\]
Therefore,
\[v_{P}(f_1)\leq v_{P}(\sum_{j=2}^n a_{2,j}f_j).\]
It suffices to show that $v_P(f_1)$ is odd if and only if $v_P(g_1)$ is odd.
If the above inequality is strict, we will have $v_{P}(g_1)=v_{P}(f_1)$ and hence we are done. Otherwise, there exists $u,v\in \mathcal O_{P,\mathbb A^1_F}^*$ such that $f_1=\pi^ru$ and $\sum_{j=2}^n a_{2,j}f_j=\pi^rv$,
where $\< \pi\>=m_{P}$  and $r$ is a negative integer. Since $v_P(g_1)=v_P(\pi^r(u-v))$ and $v_P(f_1)=r$, what remains is to show that $v_P(u-v)$ is even.

Now $g_1g_2\in D(\phi_{F(t)})$ and $\phi_{F(P)}$ is anisotropic, hence $v_{P}(g_1g_2)$ is even.  Therefore, 
we have 
\[
\begin{aligned}
v_{P}(g_1g_2)&=v_{P}(\pi^r(u-v))+v_{P}(\pi^r)(u+v)\\
&=2r +v_{P}(u+v)+v_{P}(u-v)
\end{aligned}
\]
Since $v_{P}(g_1g_2)$ is even and $v_{P}(u+v)$ and $v_{P}(u-v)$ cannot be positive simultaneously, we have $v_{P}(u-v)$, $v_{P}(u+v)$ are both even which finishes the proof. 

For (2), suppose $f$ is a good rational curve in $Q^{\psi}$. Then there exists $c \in k^*$ such that $f^*(x_1-1) \in c \< D(\phi_{k(t)}) \>$. Hence,
\[
f_F^*(x_1-1) \in c \< D(\phi_{F(t)}) \>.
\]
Therefore, by QVT, we have that $v_P(f_F^*(x_1-1))$ is even for all $P \in \mathbb{A}_F^1$ whenever $\phi_{F(P)}$ is anisotropic. Let $P \in \mathbb{A}_F^1$ such that $\phi_{F(P)}$ is anisotropic.  If $f_F$ is not defined at $P$, then $v_P(g^*(z_1))$ is even by (1). If $f_F$ is defined at $P$, then so is $g$ and by Lemma \ref{Lemma of curves for isotropic case} $v_P(g^*(z_1))=0$.  Therefore, applying QVT to $g^*(z_1)$, there exists $d\in F^*$ such that
\[
g^*(z_1) \in d \< D(\phi_{F(t)}) \>.
\]
Now, (2) follows directly from Corollary \ref{good rational curves in X}.
\end{proof}

We now turn to a geometric description of $\mathcal{S}^2(Q^{\psi})(k)$ in the anisotropic case. 
The next two theorems show that this set can be described completely in terms of good rational curves on $Q^{\psi}$.

\begin{theorem}\label{Almost A1}
    Let $\psi$ be an anisotropic form as in Notation \ref{definition for anisotropic setting}. Let $f:\mathbb A^1_k\dashrightarrow  Q^{\psi}$ be a good rational curve in $Q^{\psi}$. Then $f$ induces a 1-ghost homotopy $\mathbb A^1_k\rightarrow \mathcal S(Q^{\psi})$ such that $[f(P)]_2=[f(P')]_2$ in $\mathcal S^2(Q^{\psi})(k)$ for any $P,P'\in k$.
\end{theorem}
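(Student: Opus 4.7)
The plan is to assemble a 1-ghost homotopy $\mathbb A^1_k \to \mathcal S(Q^\psi)$ whose ``regular'' piece is $f$ itself. Let $U \subseteq \mathbb A^1_k$ be the maximal open on which $f$ is defined, and set $h_U := f|_U$. Since $\bar Q^\psi$ is projective and $\mathbb A^1_k$ is a smooth curve, $f$ extends to a morphism $\bar f : \mathbb A^1_k \to \bar Q^\psi$, and every closed point of the complement $Z := \mathbb A^1_k \setminus U$ is sent by $\bar f$ into $H^\psi$. Since $H^\psi$ is the projective quadric of $\psi$, this exhibits a $k(P)$-rational point of $H^\psi$ for each $P \in Z$, forcing $\psi_{k(P)}$ to be isotropic.

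For each $P \in Z$, Hensel's lemma lifts an isotropic vector of $\psi_{k(P)}$ to an elementary étale neighborhood $V_P \to \mathbb A^1_k$ with a unique preimage $P'$ of $P$ satisfying $k(P') = k(P)$, and on which $\psi$ is isotropic. Setting $V := \coprod_{P \in Z} V_P$, the pair $(U, V \to \mathbb A^1_k)$ becomes an elementary Nisnevich cover of $\mathbb A^1_k$. If $f$ admits no $k$-rational value on $U(k)$ then the claim is vacuous; otherwise fix $Q_0 \in f(U(k))$ and let $h_V : V \to Q^\psi$ be the constant morphism with value $Q_0$.

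Verifying the ghost-homotopy condition reduces, via Lemma \ref{lemma on lines of brs} together with the sheaf property of $\mathcal S(Q^\psi)$, to checking agreement at the generic point of each irreducible component $W_i$ of $W := U \times_{\mathbb A^1_k} V$. Let $F_i := k(W_i)$, and let $\tau_i \in F_i$ be the image of the coordinate $t \in k(t)$ under $k(t) \hookrightarrow F_i$. Since $\psi_{F_i}$ is isotropic by the construction of $V$, Construction \ref{construction of isomorphism between Z and YF} gives an isomorphism $\mathcal J^{F_i} : Q^\psi_{F_i} \xrightarrow{\sim} X^{\theta^{F_i}}$, and Theorem \ref{iteration 1 for isotropic case} identifies $\mathcal S(Q^\psi)(F_i)$ with $F_i^*/\<D(\phi_{F_i})\>$ (the target being trivial if $\theta^{F_i}$ is isotropic, in which case agreement is automatic). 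Following the proof of Lemma \ref{lemma relating isotropic and anisotropic case}(2), the curve $g := \mathcal J^{F_i} \circ f_{F_i}$ satisfies $g^*(z_1) \in d\,\<D(\phi_{F_i(u)})\>$ for some $d \in F_i^*$, where $u$ denotes the coordinate on $\mathbb A^1_{F_i}$. Specializing $g^*(z_1)$ at $u = \tau_i$ and at $u = P_0$ (where $P_0 \in U(k)$ satisfies $f(P_0) = Q_0$) via Lemma \ref{lemma for specialisation}(2) places both $z_1(g(\tau_i))$ and $z_1(Q_0)$ in $d\,\<D(\phi_{F_i})\>$; hence $[f(\tau_i)]_1 = [Q_0]_1$ in $\mathcal S(Q^\psi)(F_i)$. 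By the sheaf property, this equality extends to a dense open $W_i' \subseteq W_i$, and Lemma \ref{lemma on lines of brs} applied to $W' := \coprod_i W_i'$ concludes the proof.

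The main obstacle is the last step: Lemma \ref{lemma relating isotropic and anisotropic case}(2) naturally outputs a chain $\mathbb A^1$-homotopy over the function field $F_i(u)$, whereas Lemma \ref{lemma on lines of brs} demands agreement in $\mathcal S(Q^\psi)$ over a dense open subscheme of $W$. Bridging the two requires opening up the proof of Lemma \ref{lemma relating isotropic and anisotropic case}(2) to extract the explicit congruence $g^*(z_1) \in d\,\<D(\phi_{F_i(u)})\>$ and specializing it via Lemma \ref{lemma for specialisation}(2), keeping track of the case in which $\theta^{F_i}$ is itself isotropic (where $\mathcal S(Q^\psi)(F_i)$ is trivial but $z_1(g(\tau_i))$ may vanish).
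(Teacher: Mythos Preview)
Your overall strategy matches the paper's: build an elementary Nisnevich cover $(U,V)$ with $U$ the domain of definition of $f$, take $h_V$ to be constant, and verify the gluing condition at the generic points of $W$ so that Lemma~\ref{lemma on lines of brs} applies. The argument you sketch is correct, but the ``obstacle'' you flag in your last paragraph is self-inflicted by your choice of Nisnevich neighborhood.

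The paper does not invoke Hensel's lemma. Instead, for each indeterminacy point $Q_i$ it takes $V_i$ to be an open subset of $\mathbb{A}^1_{K_i}$ (where $K_i = k(Q_i)$), chosen so that $Q_i$ has exactly one preimage $R_i$ with $k(R_i)=K_i$. The crucial payoff is that then $k(W_i)=K_i(t)$, so Lemma~\ref{lemma relating isotropic and anisotropic case}(2) applied with $F=K_i$ gives \emph{directly} the equality $[f_{K_i}]_1=[P_i]_1$ in $\mathcal{S}(Q^\psi)(K_i(t))=\mathcal{S}(Q^\psi)(k(W_i))$ --- exactly the generic-point agreement needed. No specialization from $F_i(u)$ down to $F_i$ is required, and the lemma is used as a black box rather than being reopened. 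The paper also chooses a separate constant value $P_i\in Q^\psi(K_i)$ on each $V_i$ (any $K_i$-point in the image of $f_{K_i}$), rather than a single global $k$-point $Q_0$; this fits the hypothesis of Lemma~\ref{lemma relating isotropic and anisotropic case}(2) verbatim.

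Your route does work: once $g^*(z_1)\in d\,\<D(\phi_{F_i(u)})\>$ is extracted from the proof of Lemma~\ref{lemma relating isotropic and anisotropic case}(2), both $u=\tau_i$ and $u=P_0$ are $F_i$-rational points of $U_{F_i}$ (since the images of the minimal polynomials of the $Q_i$ in $F_i$ are nonzero), so $g$ is regular there and Lemma~\ref{lemma for specialisation}(2) applies. When $\phi_{F_i}$ is anisotropic the specialized values are automatically nonzero (the fiber $\pi^{-1}(0)$ has no $F_i$-points by Lemma~\ref{lemma on phi isotropic case}(2)); when $\phi_{F_i}$ is isotropic the comparison is trivial. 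So nothing is wrong --- but the paper's tailored cover collapses this entire detour to a single citation.
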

\begin{proof}
We begin by constructing the Nisnevich cover required for the 1-ghost homotopy. Let $U$ be the maximal open subset of $\mathbb{A}^1_k$ on which $f$ is defined. Define the closed subset  
\[
T := \mathbb{A}^1_k \setminus U=\{Q_1, \dots, Q_r\}.
\]  
For each $ Q_i \in T $, we construct \'{e}tale morphisms $ p_i: V_i \to \mathbb{A}^1_k $ such that there exists $ R_i \in V_i $ satisfying $ p_i^{-1}(Q_i)=\{R_i\} $ and $ k(R_i)=k(Q_i) $.  

Since $ \operatorname{char}(k)=0 $, the field extension $ \operatorname{Spec} k(Q_i) \to \operatorname{Spec} k $ is \'{e}tale. Let $ \pi_i $ denote the pullback morphism  
\[
\pi_i: \mathbb{A}^1_{k(Q_i)} \to \mathbb{A}^1_k.
\]  
There exists a closed point $ R_i \in \pi_i^{-1}(Q_i)$ such that $k(Q_i)=k(R_i) $. Choose $V_i$ to be an open subset of $\mathbb{A}^1_{k(Q_i)}$ containing $R_i$ such that $\pi_i^{-1}(Q_i) \cap V_i=\{R_i\} $.  

Define  
\[
V := \coprod_{i=1}^r V_i, \quad p_i := \pi_i|_{V_i}, \quad p := \coprod_{i=1}^r p_i.
\]  
Then,  
\[
(U \hookrightarrow \mathbb{A}^1_k, \quad V \xrightarrow{p} \mathbb{A}^1_k)
\]  
forms an elementary Nisnevich cover of $\mathbb{A}^1_k $.  For each $i=1, \dots, r $, consider the following pullback square:  
\[
\begin{tikzcd}
	W_i \arrow[r, "pr_{2,i}"] \arrow[d, "pr_{1,i}"'] & V_i \arrow[d, "p_i"] \\
	U \arrow[r, hook] & \mathbb{A}^1_k
\end{tikzcd}
\]  

Denoting $k(Q_i)$ by $K_i$, we observe that $W_i$ is an open subscheme of $V_i$, and hence $k(W_i)=K_i(t) $.  

To construct the desired morphism $\mathbb{A}^1_k \to \mathcal{S}(Q^{\psi}) $, it suffices, by Lemma \ref{lemma on lines of brs}, to construct morphisms  
\[
h_i: V_i \to Q^{\psi}, \quad i=1, \dots, r
\]  
such that  
\[
[pr_{1,i}^*(f)]_1=[pr_{2,i}^*(h_i)]_1  \quad \text{in } \mathcal{S}(Q^{\psi})(K_i(t)).
\]

Since $f$ cannot be defined at $Q_i$, we have $\psi_{K_i}$ is isotropic (see Remark \ref{SY remark}). Pick $P_i \in Q^{\psi}_{K_i}(K_i)$ such that $P_i$ is in the image of $f_{K_i}$. Since $f$ is a good rational curve in $Q^{\psi}$, by Lemma \ref{lemma relating isotropic and anisotropic case}, the maps
\[
\Spec K_i(t)\rightarrow U_{K_i} \xrightarrow{f_{K_i}} Q^{\psi}_{K_i}\quad\text{and}\quad \Spec K_i(t)\rightarrow \mathbb{A}^1_{K_i}\xrightarrow{P_i} Q^{\psi}_{K_i}
\]
have the same image in $\mathcal{S}(Q^{\psi}_{K_i})(K_i(t))$.

Hence, the maps obtained after composing the above maps with the projection map to $Q^{\psi}$,
\[
\Spec K_i(t)\xrightarrow{f_{K_i}} Q^{\psi}_{K_i}\rightarrow Q^{\psi} \quad\text{and}\quad \Spec K_i(t)\xrightarrow{P_i} Q^{\psi}_{K_i}\rightarrow Q^{\psi}
\]
have the same image in $\mathcal{S}(Q^{\psi})(K_i(t))$.
Define $h_i$ to be the constant map which maps $V_i$ to the point $P_i$, that is $h_i$ is defined as the following composition:
\[
V_i\hookrightarrow \mathbb{A}^1_{K_i}\xrightarrow{P_i} Q^{\psi}_{K_i}\rightarrow Q^{\psi}.
\]
Since we have the following commutative diagram,
\[
\begin{tikzcd}
	W_i \arrow[rd,swap,rightarrow, "pr_{1,i}"] \arrow[r,hookrightarrow, ""] &U_{K_i} \arrow[r,rightarrow, "f_{K_i}"] \arrow[d,swap,rightarrow, ""] & Q^{\psi}_{K_i}\arrow[d,rightarrow, ""] \\
			& U \arrow[r,swap,rightarrow, "f"] &  Q^{\psi}
\end{tikzcd}
\]
the following morphisms,
\[
\Spec K_i(t)\xrightarrow{pr^*_{1,i}(f)}  Q^{\psi}\quad \text{and}\quad \Spec K_i(t)\xrightarrow{f_{K_i}} Q^{\psi}_{K_i}\rightarrow Q^{\psi}
\]
are the same in $\mathcal{S}(Q^{\psi})(K_i(t))$.
Thus, $[pr_{1,i}^*(f)]_1=[pr_{2,i}^*(h_i)]_1$ in  $\mathcal{S}(Q^{\psi})(K_i(t))$.
Hence, by Lemma \ref{lemma on lines of brs}, $(f, \coprod_{i=1}^{r}h_i)$ can be glued to give the required morphism $\mathcal{H}:\mathbb{A}^1_k\rightarrow \mathcal{S}(Q^{\psi})$, and the result follows.
\end{proof}

\begin{remark}  
    The proof of Theorem \ref{Almost A1} is the only place where we need $\cha k=0$. Otherwise, it suffices to assume $\cha k \neq 2$.  
\end{remark}  

\begin{definition}
    Let $\sim_{\phi_F}$ denote the following equivalence relation on $Q^{\psi}(F)$. Let $P,Q\in Q^{\psi}(F)$. Then, $P\sim_{\phi_F} Q$ if and only if there exist good rational curves $f_1,\dots,f_r$ in $Q^{\psi}$ and $t_0,\dots,t_r\in F$ such that $f_1(t_0)=P$, $f_i(t_i)=f_{i+1}(t_{i})$, and $f_{r}(t_r)=Q$.
\end{definition}
\begin{theorem}\label{iteration 2 for anisotropic}
Suppose $\psi_F$ is anisotropic for some $F/k$. Then $\mathcal S^2(Q^{\psi})(F)=Q^{\psi}(F)/\sim_{\phi_F}$.
\end{theorem}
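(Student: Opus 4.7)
The plan is to prove the two inclusions between the equivalence relations $\sim_{\phi_F}$ (defined via good rational curves) and the relation $\sim_2$ on $Q^{\psi}(F)$ induced by the quotient map $\mathcal{S}(Q^{\psi})(F) \twoheadrightarrow \mathcal{S}^2(Q^{\psi})(F)$. Since $\psi_F$ is anisotropic, Remark \ref{SY remark} gives $\mathcal{S}(Q^{\psi})(F) = Q^{\psi}(F)$, so $\mathcal{S}^2(Q^{\psi})(F) = Q^{\psi}(F)/\sim_2$. The easy inclusion $\sim_{\phi_F} \subset \sim_2$ is immediate from Theorem \ref{Almost A1}: each good rational curve $f_i$ appearing in a chain that realizes $P \sim_{\phi_F} Q$ yields a 1-ghost homotopy in $\mathcal{S}(Q^{\psi})$ identifying all its $F$-rational images in $\mathcal{S}^2$, and concatenating these along shared points gives $[P]_2 = [Q]_2$.

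For the converse $\sim_2 \subset \sim_{\phi_F}$, it suffices to show that whenever $Q_1, Q_2 \in Q^{\psi}(F)$ are connected by a single 1-ghost homotopy $\mathcal{H} = \bigl((U, V \xrightarrow{p} \mathbb{A}^1_F), W, h_U, h_V, y_1, y_2\bigr)$, one has $Q_1 \sim_{\phi_F} Q_2$. The main technical claim is that the rational map $h_U : \mathbb{A}^1_F \dashrightarrow Q^{\psi}$ extending $h_U : U \to Q^\psi$ is either constant or a good rational curve. Granting this, when $y_1, y_2 \in U(F)$ one has $Q_i = h_U(y_i)$ and the conclusion is immediate. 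The case $y_i \in V(F)$ reduces to the previous one: for $y \in W(F)$ the cocycle condition evaluated on $F$-points and the equality $\mathcal{S}(Q^\psi)(F) = Q^\psi(F)$ give $h_V(y) = h_U(p(y))$, and for $y \in V(F)\setminus W(F)$ the same argument is applied componentwise to $h_V$ restricted to the irreducible component $V_j$ of $V$ through $y$, viewed via its étale structure over $\mathbb{A}^1_F$.

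To establish the main claim I would argue by contradiction in the style of Theorem \ref{iteration 2 for isotropic case}. Suppose $h_U^*(x_1 - 1) \notin c\,\langle D(\phi_{F(t)}) \rangle$ for any $c \in F^*$. By the Quadratic Value Theorem (Proposition \ref{QVT}), there exists a closed point $P \in \mathbb{A}^1_F$ with $v_P(h_U^*(x_1-1))$ odd and $\phi_{F(P)}$ anisotropic. Lemma \ref{curve lemma for anisotropic case}(1) then forces $P \notin U$, so by the elementary Nisnevich property there is a unique $P' \in V$ with $p^{-1}(P) = \{P'\}$ and $F(P') = F(P)$. Let $V'$ be the irreducible component of $V$ through $P'$ and $W' = U \times_{\mathbb{A}^1_F} V' \subset V'$; Lemma \ref{lemma on isotropic forms over DVR} guarantees $\phi_{F(W')}$ is anisotropic.

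The decisive step splits into two sub-cases on the isotropy of $\psi_{F(W')}$, and this is where I expect the main difficulty. In sub-case (a), if $\psi_{F(W')}$ is anisotropic, Remark \ref{SY remark} gives $\mathcal{S}(Q^{\psi})(F(W')) = Q^{\psi}(F(W'))$, so the compatibility $[pr_U^* h_U]_1 = [pr_V^* h_V]_1$ forces $pr_U^*(h_U^*(x_1-1)) = pr_V^*(h_V^*(x_1-1))$ in $F(W')$; étaleness of $p|_{V'}$ at $P'$ transports the odd valuation at $P$ to an odd valuation at $P'$ on the left, while Lemma \ref{curve lemma for anisotropic case}(1) forces the right side to have even $v_{P'}$, a contradiction. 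In sub-case (b), if $\psi_{F(W')}$ is isotropic, I would apply the isomorphism $\mathcal{J}^{F(W')} : Q^{\psi}_{F(W')} \xrightarrow{\sim} X^{\theta^{F(W')}}$ from Construction \ref{construction of isomorphism between Z and YF} together with the identification $\mathcal{S}(X^{\theta^{F(W')}})(F(W')) \cong F(W')^*/\langle D(\theta^{F(W')}) \rangle$ from Theorem \ref{iteration 1 for isotropic case}; the ratio of the $z_1$-coordinates of the two $\mathcal{J}$-transformed pullbacks then lies in $\langle D(\theta^{F(W')})\rangle$, hence has even $v_{P'}$ by Lemma \ref{lemma for specialisation}(1), while Lemma \ref{lemma relating isotropic and anisotropic case}(1) converts the odd parity of $v_P(h_U^*(x_1-1))$ into odd $v_{P'}$ for the $z_1$-coordinate coming from $pr_U^*h_U$, and Lemma \ref{Lemma of curves for isotropic case}(1) forces the other $z_1$-coordinate to have $v_{P'} = 0$, again a contradiction. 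Executing sub-case (b) cleanly — tracking parities through $\mathcal{J}^{F(W')}$ and propagating the anisotropy of $\phi_{F(P')}$ to $\theta^{F(W')}$ and the residue form — is the main obstacle.
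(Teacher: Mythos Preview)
Your overall plan matches the paper's: the easy direction via Theorem~\ref{Almost A1}, and the hard direction by showing $h_U$ must be a good rational curve through a parity contradiction at a bad closed point $P$. Your sub-case split is reasonable, and sub-case~(a) is in fact handled more explicitly than in the paper, which simply asserts that $\psi_{k(W')}$ is isotropic.

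The genuine gap is in sub-case~(b). When you form $\mathcal{J}^{F(W')}$, the isometry coefficients $a_{ij}$ lie in $F(W')=F(V')$, the \emph{fraction field} of $\mathcal{O}_{P',V'}$, and may well have poles at $P'$. None of Lemma~\ref{lemma relating isotropic and anisotropic case}(1), Lemma~\ref{Lemma of curves for isotropic case}(1), or Lemma~\ref{lemma for specialisation}(1) then applies as stated: each is formulated for a quadratic form with coefficients in the base field, hence units in the relevant DVR, so that the linear change of coordinates does not disturb valuations. With $a_{ij}\in F(W')$ arbitrary, the parity transfer from $v_P(h_U^*(x_1-1))$ to $v_{P'}$ of the $z_1$-coordinate need not hold. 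The paper resolves this by passing to the henselization $\mathcal{O}^h_{R',V'}$: since $\operatorname{char}k=0$, Hensel's lemma embeds the residue field $K=F(P')$ back into $\mathcal{O}^h_{R',V'}$, and because $\psi_K$ is isotropic one can use $\mathcal{J}^K$, whose coefficients lie in $K$ and are therefore units in the henselian DVR. The parity computations (the paper phrases them as ``similar computations as in Lemma~\ref{lemma relating isotropic and anisotropic case}(1)'') then go through over $F^h=\operatorname{Frac}(\mathcal{O}^h_{R',V'})$. This henselization step is precisely the missing ingredient behind the obstacle you flagged. A secondary issue: your treatment of endpoints $y_i\in V(F)\setminus W(F)$ is too vague; the paper sidesteps it by noting that $\psi_{k(V')}$ isotropic forces $\psi_{F(P)}$ isotropic at every closed point of $V'$ (Lemma~\ref{lemma on isotropic forms over DVR}), so $V'(F)=\emptyset$ since $\psi_F$ is anisotropic, and hence $y_1,y_2\in U(F)$ automatically.
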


\begin{proof}
Assume $F=k$. Suppose $Q_1 \sim_{\phi_k} Q_2$. Then, there exists a chain of good rational curves $f_i: \mathbb{A}^1_k \dashrightarrow Q^{\psi}$ for $i=1, \dots, m$ and $t_i \in k$ for $i=0, 1, \dots, m$ such that $f_1(t_0)=Q_1$, $f_i(t_i)=f_{i+1}(t_{i})$, and $f_m(t_m)=Q_2$. Hence, by Theorem \ref{Almost A1}, $[Q_1]_2=[Q_2]_2$ in $\mathcal{S}^2(Q^{\psi})(k)$.

Conversely, suppose $[Q_1]_2=[Q_2]_2$ in $\mathcal{S}^2(Q^{\psi})(k)$ are 1-ghost homotopic. We will show that $Q_1$ and $Q_2$ lie on a good rational curve. Let $\mathcal{H}:\mathbb{A}^1_k\rightarrow \mathcal{S}(Q^{\psi})$ be a 1-ghost homotopy given by the following data:
\[
\left( (U,V\xrightarrow{p} \mathbb{A}^1_k), W, h_U, h_V, P_1, P_2 \right)
\]
 with $h= h_U\coprod h_V$ such that $h(P_i)=Q_i$ and $[pr_U^*(h_U)]_1=[pr_V^*(h_V)]_1$ in $\mathcal S(Q^{\psi})(W)$.

Let $V'$ be any irreducible component of $V$ with $W'=U\times_{\mathbb A^1_k}V'$. Since there is a non-constant $\mathbb{A}^1_{k(W')}$ in $Q^{\psi}$ in the above data, we have $\psi_{k(W')}$ is isotropic (see Remark \ref{SY remark}). Hence, $\psi_{k(P)}$ is isotropic for any $P \in V'$, which implies $V'(k)=\varnothing$.  Therefore, $P_1, P_2 \in U(k)$ and the proof will be complete if we show that $h_U$ is a good rational curve.

We need to show that $h_U^*(x_1-1)\in c\<D(\phi_{k(t)})\>$ for some $c\in k^*$. Suppose not. Then by QVT, there exists a closed point $R$ of $\mathbb{A}^1_k$ such that $v_R(h_U^*(x_1-1))$ is odd and $\phi_{k(R)}$ is anisotropic. Lemma \ref{curve lemma for anisotropic case} implies that $h_U$ is not defined at $R$ and $v_R(h_U^*(x_1-1))$ is a negative integer.  

Since $U \coprod V$ is an elementary Nisnevich cover, there must exist a point $R' \in V$ such that $k(R')=k(R)=K(say)$ and $p(R')=R$. Let $V'$ be the irreducible component containing $R'$. We will show that $h_{V'}$ can not be defined on $R'$, hence we will arrive at a contradiction and the proof will be complete. 
Let $F^h$ be the fraction field of the ring $\mathcal O_{R',V'}^h$. Consider the following commutative diagram
\[
\begin{tikzcd}
	\Spec F^h \arrow[r] \arrow[d]& W' \arrow[r, "pr_{1}"] \arrow[d, "pr_{2}"] & U \arrow[d] \\
	\Spec \mathcal O_{R',V'}^h \arrow[r]& V' \arrow[r, "p"] & \mathbb{A}^1_k
\end{tikzcd}
\]  
Consider the  following restriction of $h_U$ and $h_V$ to $\text{Spec } F^h$ which are given by the following composition:
\[\text{Spec } F^h\rightarrow U\xrightarrow{h_U}Q^{\psi}, \quad \text{Spec } F^h\rightarrow V' \xrightarrow{h_V} Q^{\psi}.\]
 
Now $K\subset \mathcal O_{R',V'}^h$, hence the above maps induce the following maps 
\[H_U,H_{V'}:\text{Spec }F^h\rightarrow Q^{\psi}_K.\]

Following are some direct consequences of the above construction:
\begin{enumerate}[label=(\alph*)]
\item Since $v_R(h_U^*(x_1-1))$ is odd and the maps $p$ and $ \text{Spec }\mathcal O_{R',V'}^h\rightarrow \text{Spec }\mathcal O_{R',V'}$ are unramified, it follows that $v_{R'}(H_U^*(x_1-1))$ is odd.
\item $[pr_U^*(h_U)]_1=[pr_V^*(h_{V'})]_1$ in $\mathcal S(Q^{\psi})(W')$ implies that
\[[H_U]_1=[H_{V'}]_1  \text{ in }  \mathcal S(Q^{\psi}_K)(F^h).
\]
\end{enumerate}

Since $H_{V'}:\Spec F^h\rightarrow Q^{\psi}_K$ is defined through the two morphisms
\[\text{Spec } F^h\rightarrow V' \xrightarrow{h_V} Q^{\psi} \text{ and }\text{Spec } F^h\rightarrow \Spec K,\]
and $R'\in V'$, the map $H_{V'}$ must factor through the map $\Spec F^h\rightarrow \Spec \mathcal O_{R',V}^h$. We will use (a) and (b) to show that $H_{V'}$ does not factor through the map $\Spec F^h\rightarrow \Spec \mathcal O_{R',V}^h$, leading to a contradiction. 

By Construction \ref{construction of isomorphism between Z and YF}, since $\psi_K$ is isotropic, there exist quadratic form $\theta^K$ over $K$ such that 
\[Q^{\psi}_K\xrightarrow{\mathcal J^K} X^{\theta^K}:= \text{Spec } \frac{K[z_1,\dots, z_n ]}{\< z_1z_2-\theta^K(1,z_3,\dots,z_n)\>}\]
is an isomorphism.
Define $G_U$ and $G_{V'}$ by the following composition:
\[\Spec F^h\xrightarrow{H_{\bullet}}Q^{\psi}_K\xrightarrow{\mathcal J^K} X^{\theta^K}.\]

\noindent \textbf{Claim: }$G_{V'}$ does not factor through the map $\Spec F^h\rightarrow \Spec \mathcal O_{R',V'}^h$. 

\begin{proof}[Proof of Claim]
(b) implies that 
\[[G_U]_1=[G_{V'}]_1 \text{ in } \mathcal S(X^{\theta^K})(F^h).\]
Since $v_{R'}$ is a discrete valuation of $F^h$ with residue field $k(R')$ and $\phi_{k(R')}$ is anisotropic, it follows that $\phi_{F^h}$ is anisotropic by Lemma \ref{lemma on isotropic forms over DVR}. Hence, applying Theorem \ref{iteration 1 for isotropic case} on $F^h$-rational points for the quadric $X^{\theta^K}$, we obtain
\[G_U^*(z_1)G_{V'}^*(z_1)\in \< D(\phi_{F^h})\>.\]
Since $v_R(H_U^*(x_1-1))$ is odd by (a), similar computations as in Lemma \ref{lemma relating isotropic and anisotropic case}(1) show that $v_{R'}(G_U^*(z_1))$ must be odd. Since $\phi_{k(R')}$ is anisotropic, $v_{R'}(G_U^*(z_1)G_{V'}^*(z_1))$ is even and $v_{R'}(G_U^*(z_1))$ is odd, we have $v_{R'}(G_{V'}^*(z_1))$ must be odd. 

This further implies that $v_{R'}(G_{V'}^*(z_1))$ is nonzero; hence, by Lemma \ref{Lemma of curves for isotropic case}(1), we obtain $v_{R'}(G_{V'}^*(z_i))<0$ for some $i$. Therefore, the map $G_{V'}: \Spec F^h\rightarrow X^{\theta^K}$ does not factor through the map $\Spec F^h\rightarrow \Spec \mathcal O_{R',V'}^h$.
\end{proof}

The above claim implies that $H_{V'}:\Spec F^h\rightarrow Q^{\psi}_K$ cannot factor through the map $\Spec F^h\rightarrow \Spec \mathcal O_{R',V'}^h$, leading to a contradiction. Therefore, $h_U$ must be a good rational curve, and the proof is complete.
\end{proof}

We now proceed with the last step of Theorem \ref{main theorem for Q}(2).

\begin{theorem}\label{iteration 3 for anisotropic case}
Let $\psi$ and $Q^{\psi}$ be as in Notation~\ref{definition for anisotropic setting}. 
Then, for every finitely generated field extension $F/k$, we have
\[
\mathcal{S}^2(Q^{\psi})(F)=\mathcal{S}^3(Q^{\psi})(F).
\]
\end{theorem}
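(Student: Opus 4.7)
The plan is to treat the problem in two cases according to the isotropy of $\psi_F$. When $\psi_F$ is isotropic, Theorem~\ref{iteration 2 for isotropic case} applied with base field $F$ gives $\mathcal{S}(Q^{\psi}_F)(E)=\mathcal{S}^2(Q^{\psi}_F)(E)$ for every extension $E/F$; Lemma~\ref{lemma on stabilisation} then yields $\mathcal{L}(Q^{\psi})(F)=\mathcal{S}(Q^{\psi})(F)$, and in particular $\mathcal{S}^2(Q^{\psi})(F)=\mathcal{S}^3(Q^{\psi})(F)$. All of the substance therefore lies in the anisotropic case.

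Assume $\psi_F$ is anisotropic, and let $[Q_1]_3=[Q_2]_3$. Chaining through intermediate elements, I reduce to a single 2-ghost homotopy with data $\big((U, V\xrightarrow{p}\mathbb{A}^1_F), W, H_U, H_V, P_1, P_2\big)$ satisfying $[pr_U^*(H_U)]_2=[pr_V^*(H_V)]_2$ in $\mathcal{S}^2(Q^{\psi})(W)$ and $H(P_i)=Q_i$. By Remark~\ref{SY remark}(2), every $F$-point of $\mathbb{A}^1_F$ lies in the (possibly enlarged) domain of $H_U$; a henselization argument combined with the 2-ghost equality then lets me assume $P_1, P_2\in U(F)$. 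By Theorem~\ref{Almost A1} it thus suffices to establish the following main claim: \emph{$H_U$ is a good rational curve in $Q^{\psi}$.}

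The proof of the main claim mirrors the strategy of Theorem~\ref{iteration 2 for anisotropic}. Assuming $H_U$ is non-constant (otherwise $Q_1=Q_2$), suppose for contradiction that $H_U^*(x_1-1)\notin c\langle D(\phi_{F(t)})\rangle$ for any $c\in F^*$. By the QVT there is a closed point $R\in\mathbb{A}^1_F$ with $v_R(H_U^*(x_1-1))$ odd and $\phi_{F(R)}$ anisotropic; Lemma~\ref{curve lemma for anisotropic case} forces $R\notin U$, and the Nisnevich condition yields $R'\in V$ with $p(R')=R$ and $k(R')=k(R)=:K$. Let $V'$ be the irreducible component of $V$ containing $R'$ and set $W':=U\times_{\mathbb{A}^1_F}V'$; then $W'\subset V'$ is open and $k(W')=k(V')=K(s)$. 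Restricting the hypothesis to the generic point of $W'$ gives $[pr_U^*(H_U)]_2=[H_{V'}]_2$ in $\mathcal{S}^2(Q^{\psi})(k(V'))$. If $\psi_{k(V')}$ is isotropic, Theorem~\ref{iteration 2 for isotropic case} applied over $k(V')$ upgrades this to equality in $\mathcal{S}(Q^{\psi})(k(V'))$, and the henselization argument from the second half of the proof of Theorem~\ref{iteration 2 for anisotropic}---transporting via $\mathcal{J}^K$ of Construction~\ref{construction of isomorphism between Z and YF} to $X^{\theta^K}$ and computing parities---yields the contradiction.

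If instead $\psi_{k(V')}$ is anisotropic, then since $k(V')=K(s)$ is purely transcendental over $K$ and $\phi_K$ is anisotropic, so is $\phi_{k(V')}$. Theorem~\ref{iteration 2 for anisotropic} over $k(V')$ then provides a chain of good rational curves $f_1,\dots,f_r$ on $Q^{\psi}$ over $k(V')$ with specialization points $t_0,\dots,t_r\in k(V')$ connecting $g:=pr_U^*(H_U)|_{k(V')}$ and $g':=H_{V'}|_{k(V')}$. The defining relation $f_i^*(x_1-1)\in c_i\langle D(\phi_{k(V')(s)})\rangle$, specialized at consecutive $t_i$'s and combined with $f_i(t_i)=f_{i+1}(t_i)$, gives $c_1/c_r\in\langle D(\phi_{k(V')})\rangle$, whence $g^*(x_1-1)/g'^*(x_1-1)\in\langle D(\phi_{k(V')})\rangle$. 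Since $\phi_{k(R')}=\phi_K$ is anisotropic, Lemma~\ref{lemma for specialisation}(1) shows $v_{R'}$ of this ratio is even. But unramifiedness of $p$ at $R'$ gives $v_{R'}(g^*(x_1-1))=v_R(H_U^*(x_1-1))$, which is odd, while $H_{V'}$ is regular at $R'$, so by Lemma~\ref{curve lemma for anisotropic case}(1) $v_{R'}(g'^*(x_1-1))$ is even. This parity clash is the desired contradiction. The main new obstacle beyond the proof of Theorem~\ref{iteration 2 for anisotropic} is precisely this anisotropic subcase, where only $\mathcal{S}^2$-equality (rather than $\mathcal{S}$-equality) is available on the generic point of $W'$; bridging the gap requires Theorem~\ref{iteration 2 for anisotropic} itself over $k(V')$ combined with the parity control of Lemma~\ref{lemma for specialisation}.
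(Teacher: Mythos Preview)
Your argument has a genuine gap at the reduction step ``a henselization argument combined with the 2-ghost equality then lets me assume $P_1, P_2\in U(F)$''. If $P_1 \in V(F)$ lies over a point of $\mathbb{A}^1_F \setminus U$, then $P_1 \notin W$, so the given equality $[pr_U^*(H_U)]_2=[pr_V^*(H_V)]_2$ on $W$ says nothing about $[H_V(P_1)]_2$. Enlarging $U$ to include $p(P_1)$ (possible by Remark~\ref{SY remark}(2)) enlarges $W$, but the $\mathcal{S}^2$-equality is only hypothesized on the original $W$; to extend it to the new point one would need precisely $[H_U(p(P_1))]_2=[H_V(P_1)]_2$, which is what has to be proved. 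Henselizing at $P_1$ only yields $\mathcal{S}^2$-equality over the fraction field of $\mathcal{O}^h_{P_1,V}$, and there is no specialization map for $\mathcal{S}^2(Q^{\psi})$ back to the residue field $F$. Thus, even after your ``main claim'' is established, the case $P_i\in V(F)$ with $p(P_i)\notin U$ remains open.

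This is exactly the content of the paper's Claims~2 and~3, which have no analogue in your proposal. The paper does \emph{not} reduce to $P_i\in U$. In the case $\psi_{k(W)}$ anisotropic it first proves (Claim~1, essentially your ``main claim'') that $h_U$ is good and, more strongly, that the connecting family $F:\mathbb{A}^1_{k(W)}\dashrightarrow Q^{\psi}$ satisfies $F^*(x_1-1)\in c\langle D(\phi_{k(W)(t)})\rangle$ with $c\in k^*$. It then spreads $F$ to a rational map $G:\mathbb{A}^1_k\times V\dashrightarrow Q^{\psi}$; Claim~2 shows that the restriction of $G$ to any smooth $k$-rational curve on any smooth birational model of $\mathbb{A}^1_k\times V$ is constant or a good rational curve (the constant $c\in k^*$ survives specialization to the curve). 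Claim~3 resolves the indeterminacy of $G$ by successive blow-ups at $k$-points and invokes Theorem~\ref{AM} to produce a chain of such curves linking $h_V(P)$ to $h_U(p(P))$ for each $P\in V(k)$, whence $[h_V(P)]_2=[h_U(p(P))]_2$ by Theorem~\ref{Almost A1}. Your parity argument in the anisotropic subcase is essentially correct for Claim~1 (modulo the routine edge case $f_i(t_i)^*(x_1)=1$, handled by passing to $x_1+1$), but Claim~1 alone does not finish the proof.
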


\begin{proof}
If $\psi_F$ is isotropic, then by Theorem \ref{main theorem for Q}(1), we have $\mathcal{L}(Q^{\psi})(F)=\mathcal{S}(Q^{\psi})(F)$, and the assertion follows. Now, assume that $\psi_F$ is anisotropic and, by a base change, further assume that $F=k$.  

Suppose that $Q_1, Q_2 \in Q^{\psi}(k)$ are connected by a 2-ghost homotopy, which consists of the following data:  
\[
\left( (U, V \xrightarrow{p} \mathbb{A}^1_k), W, h_U, h_V, P_1, P_2 \right)
\]
where $h=h_U \coprod h_V$ satisfies $h(P_i)=Q_i$ for $i=1,2$ and  
\[
[ pr_U^*(h_U) ]_2==[ pr_V^*(h_V) ]_2 \quad \text{in } \mathcal{S}^2(Q^{\psi})(W),
\]
with $pr_U, pr_V$ denoting the natural projections from $W=U \times_{\mathbb{A}^1_k} V$ onto $U$ and $V$, respectively.  

Assume that $V$ is irreducible (otherwise, the argument applies separately to each irreducible component). We must show that $[Q_1]_2=[Q_2]_2$ in $\mathcal{S}^2(Q^{\psi})(k)$.  

First, suppose that $\psi_{k(W)}$ is isotropic. Then  
\[
\mathcal{S}(Q^{\psi})(k(W))=\mathcal{S}^2(Q^{\psi})(k(W)).
\]
This implies  
\[
[ pr_U^*(h_U) ]_1=[ pr_V^*(h_V) ]_1 \quad \text{in } \mathcal{S}(Q^{\psi})(k(W)).
\]
Applying Lemma \ref{lemma on lines of brs}, we conclude that $[Q_1]_2=[Q_2]_2$ in $\mathcal{S}^2(Q^{\psi})(k)$, completing the proof in this case.

Now assume that $\psi_{k(W)}$ is anisotropic. Since  
\[
[pr_U^*(h_U)]_2=[pr_V^*(h_V)]_2 \quad \text{in } \mathcal{S}^2(Q^{\psi})(W),
\]  
it follows from Theorem~\ref{iteration 2 for anisotropic} that 
\[
pr_U^*(h_U) \sim_{\phi_{k(W)}} pr_V^*(h_V).
\]
Consequently, there exists a chain of good $k(W)$-rational curves 
\[
F^i: \mathbb{A}^1_{k(W)} \dashrightarrow Q^{\psi}
\]
connecting $k(W)$ rational points $pr_U^*(h_U)$ and $pr_V^*(h_V)$.  
Assume they are connected by a single such map $F$.  
Then
\[
F: \mathbb{A}^1_{k(W)} \dashrightarrow Q^{\psi}
\]
satisfies the following:
\begin{enumerate}
    \item $F(t_1)=pr_U^*(h_U)$ and $F(t_2)=pr_V^*(h_V)$ for some $t_1, t_2 \in k$;
    \item $F^*(x_1-1) \in f \< D(\phi_{k(W)(t)}) \>$ for some $f \in k(W)^*$.
\end{enumerate}

\item \textbf{Claim 1:} There exists $c\in k^*$ such that $F^*(x_1-1)\in c\< D(\phi_{k(W)(t)})\>$.
\begin{proof}[Proof of Claim 1]
Let $F_1(t)\in k(W)(t)$ denote $F^*(x_1)$. By(2), we have $F_1(t)-1\neq 0$. Moreover, we can assume $F_1(a)-1 \neq 0$ for any $a\in k$ where $F$ is defined. Indeed, suppose $F_1(a)-1=0$, then $F_1(a) + 1=2$. Therefore, 
\[F_1(t)+1 \in 2\< D(\phi_{k(W)(t)})\>.\] Moreover, since  
\[
F_1^2(t)-1 \in \< D(\phi_{k(W)(t)})\>
\quad \text{and} \quad F_1(t)-1\neq 0, 
\]  
it follows that  
\[
F_1(t)-1 \in 2\< D(\phi_{k(W)(t)})\>,
\]  
so we may take $c=2$ and we are done. 

Hence, we assume both $F_1(t_1),F_1(t_2) \neq 1$. In this case, we have  
\[
F_1(t_1)-1, F_1(t_2)-1 \in f\< D(\phi_{k(W)})\>.
\] 
Since $F_1(t_1)-1=p^*(h_U^*(x_1-1))$, the proof of Claim 1 will be complete if $h_U$ is a good $k$-rational curve. 

The proof that $h_U$ is a good rational curve follows similarly to the proof of Claim 1 in Theorem \ref{iteration 2 for isotropic case}. We need to show that  
\[
h_U^*(x_1-1) \in c\< D(\phi_{k(t)})\>  
\]  
for some $c \in k^*$.  

Suppose not. Then, by QVT there exists a closed point $P$ of $\mathbb{A}^1_k$ such that $v_P(h_U^*(x_1-1))$ is odd and $\phi_{k(P)}$ is anisotropic. By Lemma \ref{curve lemma for anisotropic case}, $h_U$ is not defined at $P$. Hence, there exists $P' \in V$ such that $p(P')=P$ and $k(P')=k(P)$.  Since  
\[
F_1(t_1)-1=p^*(h_U^*(x_1-1))  
\quad \text{and} \quad  
F_1(t_2)-1=h_V^*(x_1-1),  
\]  
we obtain  
\[
p^*(h_U^*(x_1-1)) \cdot h_V^*(x_1-1) \in \< D(\phi_{k(W)})\>.
\]  
Moreover, since $p$ is unramified, we have  
\[
v_{P'}(p^*(h_U^*(x_1-1)))=v_P(h_U^*(x_1-1)),  
\]  
which is odd.  Since $\phi_{k(P')}$ is anisotropic, it follows that  
\[
v_{P'}(p^*(h_U^*(x_1-1)) \cdot h_V^*(x_1-1))  
\]  
is even. This implies that $v_{P'}(h_V^*(x_1-1))$ is odd, leading to $h_V$ not being defined at $P'$ by Lemma \ref{curve lemma for anisotropic case}. This contradicts the assumption that $h_V$ is defined on $V$.
\end{proof}

\item \textbf{Claim 2:} If $T$ is a smooth  surface birational to $\mathbb A^1_k\times V$, $G$ is the rational map $G: T \dashrightarrow Q^{\psi}$ induced by $F$ and $E$ is a smooth $k$-rational curve embedded in $T$, then $G|_{E}$ is either constant or it is a good rational curve.

\begin{proof}[Proof of Claim 2]
The rational map $F: \mathbb{A}^1_{k(W)} \dashrightarrow Q^{\psi}$ induces a rational map  
\[
G: T \dashrightarrow Q^{\psi}.
\]  
Since $T$ is a smooth surface, the extension $\bar{G}: T \dashrightarrow \bar{Q^{\psi}}$ is defined everywhere except at finitely many points. As $\bar{Q^{\psi}} \setminus Q^{\psi}$ has no $k$-rational points, the images of $k$-rational points must lie in $Q^{\psi}$. Since $E(k)\neq\emptyset$, $G$ is defined on an open subset of $E$.  Thus,  
\[
G^*(y_i)=F_i \in \mathcal{O}_{E, T},
\]  
where $F^*(x_i)=F_i$. Therefore, the restriction of $G$ to $E$ is given by the rational map  
\[
G|_E: E \dashrightarrow Q^{\psi}, \quad (\bar{F}_1, \dots, \bar{F}_n),
\]  
where $\bar{F}_i$ are the images of $F_i$ under the natural map  $\mathcal{O}_{E,T} \to \mathcal{O}_{E,T} / m_E$. Since we have already established that  
\[
F_1(t)-1 \in c\< D(\phi_{k(W)(t)})\> \quad \text{for some } c \in k^*,
\]  
it follows that  
\[
\bar{F}_1-1 \in c\< D(\phi_{k(E)})\> \quad \text{or} \quad \bar{F}_1-1=0.
\]  
Since the closed subscheme  $V(x_1-1)\subset Q^{\psi}$ has only one $k$-rational point, the proof is complete.
\end{proof}
\begin{notation}
    If $T$ is a smooth surface as above, $G:T\dashrightarrow Q^{\psi}$ is the rational map induced by $F$,  and $C$ is any smooth curve in $T$, then $G_C$ will denote the morphism $C\rightarrow \bar Q^{\psi}$ induced by $G|_C$.
\end{notation}
Let $G$ denote the rational map induced by $F$ for the surface $\mathbb A^1_k\times V$, that is \[G:\mathbb A^1_k\times V\dashrightarrow  Q^{\psi}\] such that $G(t_1)|_W=pr_U^*(h_U)$ and $G(t_2)|_W=h_V|_W$. Since $h_U$ is a good rational curve and  must be defined at all $k$-rational points of $\mathbb A^1_k$, from Theorem \ref{Almost A1} we have
\[[ h_U(P)]=[h_U(P')] \text{ in } \mathcal S^2(Q^{\psi})(k) \quad \forall P,P'\in \mathbb A^1_k(k).\]

Therefore, it is sufficient to show the following claim:

\item \textbf{Claim 3:} For any $P\in V(k)$,  $[ h_V(P)]_2=[h_U(p(P))]_2 \text{ in } \mathcal S^2(Q^{\psi})(k)$.

\begin{proof}[Proof of Claim 3]
Let $P\in V(k)$ be fixed. Consider the following curves in $\mathbb A^1_k\times V$
\begin{enumerate}
    \item  Let $M,N$ denote the copies of $V$ in $\mathbb A^1_k\times V$ corresponding to $\{t_1\}\times V$ and $\{t_2\}\times V$ respectively. Then $G_M$ is induced by $ h_U\circ p$ and $G_N=h_V$.
    \item  Let $L$ denote the line $\mathbb A^1_k\times P$.
\end{enumerate}
It is sufficient to show that $G_M(P)$ and $G_N(P)$ lie on a connected chain of good rational curves because
\[G_M(P)=\bar h_U(p(P))\quad\text{and}\quad G_N(P)=h_V(P).\]
 First, suppose $G$ is defined at $(t_1,P)$ and $(t_2,P)$, then \[G_M(P)=G(t_1,P)=G_L(t_1), \quad G_N(P)=G(t_2,P)=G_L(t_2). \]  From Claim 2, $G_L$ is a good rational curve and hence we are done. Now assume $G$ is not defined at either $(t_1,P)$ or $(t_2,P)$. We will resolve the indeterminacy of $G$ to find the required chain of rational curves. 
 
 Let $\pi:T\rightarrow \mathbb A^1\times V$ be the morphism obtained after successively blowing up points such that the indeterminacy of $G$ is resolved. 

\[
\begin{tikzcd}
T\arrow[dr, rightarrow, " \tilde G"] \arrow[d, "\pi"]  \\
\mathbb{A}^1 \times V \arrow[r, dashrightarrow, "G", swap] & \bar Q^{\psi}
\end{tikzcd}
\]

Let $L',M',$ and $N'$ be the strict transforms of $L,M$ and $N$ respectively. Let $m\in M'(k)$ such that $\pi(m)=(t_1,P)$. Let $n\in N'(k)$ such that $\pi(n)=(t_2,P)$. Since $(t_1,P)$ and $(t_2,P)$ lie on the line $L$, we can use Theorem \ref{AM} to conclude that there exist smooth rational curves in $T$(which are isomorphic to $\mathbb A^1_k$) $E_1,\dots E_r$ such that $m$ and $n$ can be connected by a chain of rational curves $L',E_1,\dots, E_r$. Therefore, $\tilde G(m)$ and $\tilde G(n)$ are contained in the chain of rational curves comprising $\tilde G_{L'}$ and $\tilde G_{E_i}$. Since, 
\[\tilde G(m)=\tilde G_{M'}(P)=G_M(P),\quad \tilde G(n)=\tilde G_{N'}(P)=G_N(P)\]
and $\tilde G_{L'}$ and $\tilde G_{E_i}$ are either constant or good rational curves by Claim 2, we are done.
\end{proof}
Hence, the proof is complete.
\end{proof}
\begin{proof}[Proof of Theorem \ref{main theorem for Q}(2)]
By Theorem \ref{iteration 3 for anisotropic case}, \cite[Theorem 1.1]{BRS} and Lemma \ref{lemma on stabilisation}, we have for any $F/k$,
\[\mathbb \pi_0^{\mathbb A^1}(Q^{\psi})(F)=\mathcal L(Q^{\psi})(F)=\mathcal S^2(Q^{\psi})(F).\qedhere\]    
\end{proof}

\section{\texorpdfstring{Characterization of $\mathbb A^1$-connected quadrics}{Characterization of A1 connected quadrics}}
It is well known that a smooth projective quadric $Q$ over a field $k$ is rational if and only if it has a $k$-rational point, i.e., $Q(k) \neq \emptyset$. Consequently, the $\mathbb{A}^1$-connectedness of smooth projective quadrics is completely determined by the existence of a rational point over the base field (see Theorem 3, \cite{AM}).  In this section, we extend this characterization to affine quadrics. Specifically, we describe the $\mathbb{A}^1$-connectedness of affine quadrics in terms of classical invariants of quadratic forms.

\begin{theorem}
    Suppose $X=Q^{\psi}$ with where $\psi$ is a quadratic form in $n\geq 3$ variables. Let $\varphi := \psi \perp \< -1 \>$ be the quadratic form corresponding to $\bar{X}$. Then $X$ is $\mathbb{A}^1$-connected if and only if one of the following conditions holds:
    \begin{enumerate}
        \item $i_0(\varphi) \geq 2$.
        \item $i_0(\varphi)=1$, $i_0(\psi)=0$, and $i_1(\psi) \geq 2$.
    \end{enumerate}
\end{theorem}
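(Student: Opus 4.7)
The plan is to use Lemma~\ref{stable} to reduce the $\mathbb A^1$-connectedness of $Q^\psi$ to checking that $\pi_0^{\mathbb A^1}(Q^\psi)(F)=\ast$ for every finitely generated separable extension $F/k$, and then to combine Theorem~\ref{main theorem for Q} with classical facts about quadratic forms. First, $Q^\psi(k)\neq\varnothing$ if and only if $\psi$ represents $1$, i.e.\ $i_0(\varphi)\geq 1$; if $i_0(\varphi)=0$ then $Q^\psi$ is not $\mathbb A^1$-connected. Assume $i_0(\varphi)\geq 1$ and write $\psi\sim\langle 1\rangle\perp(-\phi)$, so $\varphi\sim\mathbb H\perp(-\phi)$ and the key identity $i_0(\varphi)=1+i_0(\phi)$ holds.

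If $\psi$ is isotropic (so $i_0(\psi)\geq 1$ and condition~(2) is vacuous), Theorem~\ref{main theorem for Q}(1) gives $\pi_0^{\mathbb A^1}(Q^\psi)(F)=F^{*}/\langle D(\phi_F)\rangle$. I would show this vanishes for all $F/k$ if and only if $\phi$ is isotropic over $k$: sufficiency is immediate, while for necessity I take $F=k(t)$ and apply Lemma~\ref{lemma for specialisation}(1) at the DVR $\mathcal O_{(t),k(t)}$, which shows every element of $\langle D(\phi_{k(t)})\rangle$ has even $v_0$-valuation, so the class of $t$ is non-trivial. Since $\phi$ isotropic is equivalent to $i_0(\varphi)\geq 2$, this matches condition~(1).

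If $\psi$ is anisotropic then $\phi$ is too, $i_0(\varphi)=1$, and Theorem~\ref{main theorem for Q}(2) gives $\pi_0^{\mathbb A^1}(Q^\psi)(F)=\mathcal S^2(Q^\psi)(F)$. For necessity, if $Q^\psi$ is $\mathbb A^1$-connected, I specialize to $F=k(\psi)(t)$—over which $\psi$ is isotropic so the Case~1 formula applies—and rerun the QVT argument to force $\phi_{k(\psi)}$ to be isotropic; the chain of identities $i_0(\phi_{k(\psi)})=i_0(\psi_{k(\psi)})-1=i_1(\psi)-1$, which follows from $\phi_{k(\psi)}$ representing $1$, then yields $i_1(\psi)\geq 2$. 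For sufficiency with $i_1(\psi)\geq 2$ and any $F/k$: if $\psi_F$ is isotropic, Proposition~\ref{alternative defn of i1} gives $i_0(\psi_F)\geq 2$, hence $\phi_F$ is isotropic and the Case~1 formula gives $\pi_0^{\mathbb A^1}(Q^\psi)(F)=\ast$. If $\psi_F$ is anisotropic, Theorem~\ref{iteration 2 for anisotropic} identifies $\mathcal S^2(Q^\psi)(F)=Q^\psi(F)/\sim_{\phi_F}$, and we must show that any two $F$-points are linked by a chain of good rational curves.

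This last point is the main obstacle. My plan is, given $P_1=(1,0,\dots,0)$ and $P_2=(\alpha,\beta)\in Q^\psi(F)$ with $\beta\neq 0$, to rationally parametrize the conic cut on the $2$-plane $\mathrm{span}(P_1,P_2)$ by projecting from $P_1$, yielding a rational curve $f:\mathbb A^1_F\dashrightarrow Q^\psi$ through $P_1$ and $P_2$ satisfying $f^*(x_1-1)=-2(\alpha r+1)^{2}/(r^2+2\alpha r+1)$. By the Quadratic Value Theorem (Proposition~\ref{QVT}), $f$ is good iff $\phi_F$ becomes isotropic over the residue field $F(\sqrt{\phi_F(\beta)})$ of the roots of $r^2+2\alpha r+1$. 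The crucial observation is that $d:=\phi_F(\beta)\in D(\phi_F)$ implies $\phi_{F(\sqrt{d})}$ represents $1$ (via $\beta/\sqrt d$), so $\psi_{F(\sqrt{d})}$ is isotropic; then Proposition~\ref{alternative defn of i1} combined with $i_1(\psi)\geq 2$ forces $i_0(\psi_{F(\sqrt{d})})\geq 2$ and hence $\phi_{F(\sqrt{d})}$ isotropic. This gives a good curve directly when $d$ is a non-square; the exceptional cases $P_2=\pm P_1$ and $d\in F^{*2}$ are handled by routing through an auxiliary $P_3\in Q^\psi(F)$ whose $d_3=\phi_F(\beta_3)$ is a non-square, producing a two-step chain $P_1\sim_{\phi_F}P_3\sim_{\phi_F}P_2$.
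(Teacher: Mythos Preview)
Your overall strategy and the treatment of condition~(1) are correct and parallel the paper, but there are two genuine gaps.

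In the necessity direction for~(2), the identity $i_0(\phi_{k(\psi)})=i_0(\psi_{k(\psi)})-1$ does not follow from ``$\phi_{k(\psi)}$ represents~$1$'' alone. Writing $L=k(\psi)$ and $\psi_L=m\,\mathbb H\perp\psi_0$ with $\psi_0$ anisotropic, Witt cancellation gives $\phi_L\sim\langle 1\rangle\perp(m-1)\,\mathbb H\perp(-\psi_0)$, so $i_0(\phi_L)=m-1$ only when $\psi_0$ does \emph{not} represent~$1$; if it does, $i_0(\phi_L)=m$ and your identity fails. The implication ``$\phi_{k(\psi)}$ isotropic $\Rightarrow i_1(\psi)\ge 2$'' is nonetheless correct, but it is exactly the contrapositive of Proposition~\ref{Witt index of subform} applied to the codimension-one subform $-\phi\subset\psi$, which is what the paper invokes; you should cite that rather than the unjustified identity.

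More seriously, in the sufficiency direction for~(2) with $\psi_F$ anisotropic, your conic through $P_1$ and $P_2$ is good precisely when $d=\phi_F(\beta)$ is a non-square: if $d\in(F^*)^2$ the denominator $r^2+2\alpha r+1$ splits with simple $F$-rational roots, and QVT then requires $\phi_F$ itself to be isotropic, which it is not. Your proposed fix of routing through an auxiliary $P_3$ is not carried out: you neither establish that some $P_3\in Q^\psi(F)$ with $d_3$ a non-square exists, nor construct a good curve from $P_3$ to $P_2$---the conic through $P_2,P_3$ lives in a different $2$-plane and its goodness is governed by the discriminant $B_\psi(P_2,P_3)^2-1$, which you have not controlled. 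The paper sidesteps this entirely by proving a stronger fact: when $i_1(\psi)\ge 2$ and $\psi_F$ is anisotropic, \emph{every} rational curve $f:\mathbb A^1_F\dashrightarrow Q^\psi$ is good. Indeed, at a closed point $P$ where $f$ is undefined the extension to $\bar Q^\psi$ lands in $H^\psi$, so $\psi_{F(P)}$ is isotropic; Proposition~\ref{alternative defn of i1} then gives $i_0(\psi_{F(P)})\ge 2$, forcing the codimension-one subform $\phi_{F(P)}$ to be isotropic, while at the remaining points Lemma~\ref{curve lemma for anisotropic case}(1) handles the parity. Since $\bar Q^\psi$ is $\mathbb A^1$-chain connected and $(\bar Q^\psi\setminus Q^\psi)(F)=\varnothing$, any two $F$-points are joined by a chain of good rational curves without case analysis.
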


\begin{proof}
If $i_0(\varphi) \geq 1$, then $1\in D(\psi)$ and there exists a quadratic form $\phi$ in $n-1$ variables such that  
    \[
       \psi=\<1\>\perp\<-\phi\>, \quad \text{and } \varphi=\< 1, -1 \> \perp -\phi.
    \]
    
($\impliedby$)  By Theorem \ref{stable}, it suffices to show that $\pi_0^{\mathbb{A}^1}(X)(F)=*$ for all $F/k$ whenever (1) or (2) holds. Suppose condition (1) holds,  then $\phi$ is isotropic, and the result follows directly from Lemma \ref{lemma on phi isotropic case}.  

Now, suppose condition (2) holds. In this case, $\psi$ is anisotropic. Since $i_1(\psi) \geq 2$, Theorem \ref{alternative defn of i1} ensures that  $i_0(\psi_F) \geq 2$ whenever $\psi_F$ is isotropic. Since $-\phi$ is a codimension-1 subform of $\psi$,  $i_1(\psi) \geq 2$ implies that $\psi_F$ is isotropic if and only if $\phi_F$ is isotropic.  

This guarantees that every rational curve in $X$ is a good rational curve. Since $\bar{X}$ is $\mathbb{A}^1$-connected, we have $\mathcal{S}(\bar X)(F)=*$. Therefore, any two $F$-points of $X$ can be connected by a chain of good rational curves,  which, together with, Theorem \ref{iteration 2 for anisotropic} implies that $\mathcal{S}^2(Q^{\psi})(F)=*$ for all $F/k$, completing the proof.\\  

($\implies$) Suppose that $X$ is $\mathbb{A}^1$-connected. This implies that $X(k) \neq \varnothing$, which in turn ensures that $i_0(\varphi) \geq 1$. Consequently, $
\varphi=\< 1, -1 \> \perp -\phi.$ It suffices to show that for all cases other than (1) and (2), there exists a field extension $F/k$ such that $\psi_F$ is isotropic and $\mathcal{S}(X)(F) \neq *$. Indeed, since $\mathcal{S}(X)(F)=\pi_0^{\mathbb{A}^1}(X)(F)$ whenever $\psi_F$ is isotropic, we will be done. The remaining cases are as follows:

\begin{case}[$i_0(\varphi)=1$ and $i_0(\psi)=1$]
    In this case, $\psi$ is isotropic, but $\phi$ is anisotropic. By QVT, we have:
    \[
    t \notin \< D(\phi_{k(t)}) \>,
    \]
    which implies that $\mathcal{S}(X)(k(t)) \neq *$.
\end{case}

\begin{case}[$i_0(\varphi)=1$, $i_0(\psi)=0$, and $i_1(\psi)=1$]
 By Theorem \ref{Witt index of subform}, $\phi_{k(\psi)}$ is anisotropic. Therefore, by the QVT, we have:
    \[
    t \notin \< D(\phi_{k(\psi)(t)}) \>,
    \]
    which implies that $\mathcal{S}(X)(k(\psi)(t)) \neq *$.
    
\end{case}
In both cases, we have shown that $X$ is not $\mathbb{A}^1$-connected, which completes the proof.
\end{proof}

\begin{examples}
     The above theorem can be applied to produce numerous examples of affine quadrics that are either $\mathbb{A}^1$-connected or not $\mathbb A^1$-connected.

    \begin{enumerate}
    \item If $k$ is quadratically closed, then for any regular quadratic form $\varphi$ over $k$, we have
    \[
    i_0(\varphi)=\left\lfloor \frac{\dim \varphi}{2} \right\rfloor.
    \]
    Hence, any affine quadric of dimension $n > 0$ over such a field is either $\mathbb{A}^1$-connected or isomorphic to $\mathbb{G}_m \times \mathbb{A}^{n-1}_k$.

    \item Suppose $\varphi$ is a regular anisotropic form of dimension $2^n + 1$. Then $Q^{\varphi}$ is not $\mathbb{A}^1$-connected. Indeed, in this case, $i_0(\varphi)=0$ and $i_1(\varphi)=1$ (see Corollary 1 in \cite{Hoffmann}). In particular, the real quadrics defined by
    \[
    x_1^2 + \dots + x_{2^n+1}^2=1
    \]
    are not $\mathbb{A}^1$-connected.

    \item Suppose $\varphi=\< 1, 1, \dots, 1 \>$ is a quadratic form of dimension $2^n + \ell$, where $2 \leq \ell \leq 2^n$, and assume that $\varphi_k$ is anisotropic. Then the associated quadric $Q^{\varphi}$ is $\mathbb{A}^1$-connected. Indeed, we have $i_0(\varphi)=0$, and by Corollary 1 in \cite{Hoffmann}, $i_1(\varphi) \leq \ell$. It is well known that a Pfister form is either anisotropic or totally isotropic (see Corollary 6.3 in \cite{EKM}). Since $\varphi$ is a subform of the $2^{n+1}$-dimensional Pfister form $\< 1, \dots, 1 \>$, a standard dimension-counting argument shows that $i_1(\varphi) \geq \ell$, and hence $i_1(\varphi)=\ell \geq 2$. Thus, $Q^\varphi$ is $\mathbb{A}^1$-connected. Similar arguments apply to any Pfister neighbor.

    \end{enumerate}
\end{examples}



\begin{thebibliography}{AAAA}
\bibitem{AM}
{\scshape A. Asok, F. Morel.}
Smooth varieties up to $\#A^1$-homotopy and algebraic $h$-cobordisms. 
{\em Adv. Math.} 227(2011), no. 5, 1990 -- 2058.

\bibitem{Aconjecture}
{\scshape J. Ayoub.} 
Counterexamples to {F}. {M}orel's conjecture on {$\pi^{\mathbb A^1}_0$}.
{\em C. R. Math. Acad. Sci. Paris.} (2023), 1087--1090.

\bibitem{BHS}
{\scshape C. Balwe, A. Hogadi, A. Sawant.}
$\mathbb A^1$-connected components of schemes. 
{\em Adv. Math.} 282 (2015), 335--361.



\bibitem{BRS}
{\scshape C. Balwe, B. Rani, A. Sawant.} 
Remarks on iterations of the {$\mathbb A^1$}-chain connected components construction.
{\em Ann. K-Theory} (2022), no. 2, 385--394.

\bibitem{BSgroups}
{\scshape C. Balwe, A. Sawant.} 
$R$-equivalence and $\mathbb A^1$-connectedness in anisotropic groups.
{\em Int. Math. Res. Not. IMRN}(2015), no. 22, 11816--11827.


\bibitem{BSgroups2}
{\scshape C.  Balwe, A. Sawant.}
$\mathbb{A}^1$-connectedness in reductive algebraic groups.
{\em Trans. Amer. Math. Soc.} {\bf 369} (2017), no.~8, 5999--6015.


\bibitem{BSruled}
{\scshape C. Balwe, A. Sawant.} 
{$\mathbb A^1$}-connected components of ruled surfaces. 
{\em Geom. Topol. } (2022), no. 1, 321-376.


\bibitem{CU} 
{\scshape U. Choudhury.} 
Connectivity of motivic H-spaces.
{\em Algebr. Geom. Topol.} 14 (2014), no.1, 37–55.


\bibitem{EKM}
{\scshape  Elman, R.~S.; Karpenko,  N.~A.; Merkurjev, A.~S.} 
The algebraic and geometric theory of quadratic forms.
{\it American Mathematical Society Colloquium Publications} 56, Amer. Math. Soc., Providence, RI.(2008)

\bibitem{gupta}
{\scshape N. Gupta.}
Iterations of the functor of naive $\mathbb A^1$-connected components of varieties.
{\it New York J. Math.} {\bf 31} (2025), 167--181.

\bibitem{Hoffmann} 
{\scshape D.~W. Hoffmann.} 
Isotropy of quadratic forms over the function field of a quadric.
{\em Math. Z. {\bf 220}} (1995), no.~3, 461--476.

\bibitem{Knebusch}
{\scshape M. Knebusch.} 
Specialization of quadratic and symmetric bilinear forms, and a norm theorem. 
{\em Acta Arith.} {\bf 24} (1973), 279--299.




\bibitem{Morelbook}
{\scshape F. Morel.}
$\#A^1$-algebraic topology over a field.
\emph{Lecture Notes in Mathematics, Vol.} (2052), Springer, Heidelberg, 2012.

\bibitem{MF}
{\scshape F. Morel.} 
The stable {${\mathbb A}^1$}-connectivity theorems.
{\em $K$-Theory} (2015), no. 1-2, 1--68.

\bibitem{MV}
{\scshape F. Morel, V. Voevodsky.}
{${\mathbb A}^1$}-homotopy theory of schemes.
{\em Inst. Hautes \'{E}tudes Sci. Publ. Math.} No. 90, (1999), 45--143 (2001).

\bibitem{Pawar}
{\scshape R. Pawar.}
$\mathbb{A}^1$-connected components of blowup of threefolds fibered over a surface.
{\em J. Pure Appl. Algebra }227 (2023), no.~8, Paper No. 107346, 28.


\bibitem{CT}
{\scshape J.-L. Colliot-Th\'el\`ene.} 
Formes quadratiques sur les anneaux semi-locaux r\'eguliers.
{\em  Bull. Soc. Math. France M\'em.} {\bf 1979}, no.~59, 13--31.

\bibitem{Vishik}
{\scshape A. Vishik.}
Motives of quadrics with applications to the theory of quadratic forms, in 
{\it Geometric methods in the algebraic theory of quadratic forms}, 25--101, Lecture Notes in Math., 1835, Springer, Berlin.

\bibitem{Wendt}
{\scshape M. Wendt.} 
On the $\mathbb A^1$-fundamental groups of smooth toric varieties.
{\em Adv. Math.} {\bf 223} (2010), no.~1, 352--378.

\end{thebibliography}
\end{document}